\documentclass[12pt]{article}
 
 \usepackage{amsmath, latexsym, amsfonts, amssymb, amsthm, amscd, enumerate, comment, stackrel, mathrsfs}
 \usepackage[usenames,dvipsnames,svgnames,table]{xcolor}
 \usepackage[left=3cm, right=2.5cm, top=2.5cm, bottom=2.5cm]{geometry}
\usepackage[affil-it]{authblk}
\usepackage{blindtext}
\usepackage{hyperref}

 \newtheorem{theorem}{Theorem}[section]
 \newtheorem{proposition}[theorem]{Proposition}
 \newtheorem{lemma}[theorem]{Lemma}

 \theoremstyle{definition}

  %Natalia comments
  %Natalia comments
 %JC comments
 
 \title{Speed of extinction for CSBP in a subcritical L\'evy environment:  strongly and intermediate cases}
 \author{Natalia Cardona-Tob\'on and Juan Carlos Pardo}

 \begin{document}
 	
 	\begin{center}
 		{\Large \bf Explosion rates for continuous state branching processes in a L\'evy environment}\\[5mm]

 		\vspace{0.7cm}
 		\textsc{Natalia Cardona-Tob\'on * \footnote{Institute for Mathematical Stochastics, Georg-August-Universit\"{a}t G\"{o}ttingen.  Goldschmidtstrasse 7, 37077 Göttingen, Germany,  {\tt natalia.cardonatobon@uni-goettingen.de}}}  and \textsc{Juan Carlos Pardo\footnote{Centro de Investigaci\'on en Matem\'aticas. Calle Jalisco s/n. C.P. 36240, Guanajuato, M\'exico, {\tt jcpardo@cimat.mx}}}
 		\vspace{0.5cm}

 		{\textit{*Corresponding author: natalia.cardonatobon@uni-goettingen.de}}

 	%	\vspace{0.5cm}
% 		Version of \today
 	\end{center}
 	
 	\vspace{0.3cm}

 	\begin{abstract}
 		\noindent 
 		Here we study the long-term behaviour of the non-explosion probability  for continuous-state branching processes in a L\'evy environment when the branching mechanism is given by the negative of the Laplace exponent of a subordinator. In order to do so, we study the law of this family of processes in the infinite mean case and provide necessary and sufficient conditions for the process to be conservative, i.e. that the process does not explode in finite time a.s. In addition, we establish  precise rates for the non-explosion probabilities in the subcritical and critical regimes, first found by  Palau et al. \cite{palau2016asymptotic} in the case when the branching mechanism is  given by the negative of the Laplace exponent of a stable subordinator.
 		
 		\par\medskip
 		\footnotesize
 		\noindent{\emph{2020 Mathematics Subject Classification}:}
 		60J80, 60G51, 60H10, 60K37
 		\par\medskip
 		\noindent{\emph{Keywords:} Continuous-state branching processes;  L\'evy processes; random environment; long-term behaviour; explosion.} 
 	\end{abstract}

 	\section{Introduction and main results}

Let us  consider $(\Omega^{(b)}, \mathcal{F}^{(b)}, (\mathcal{F}^{(b)}_t)_{t\geq 0}, \mathbb{P}^{(b)})$ a filtered probability space satisfying the usual hypothesis. A continuous-state branching process (CSBP for short) $Y=(Y_t, t\ge 0)$ defined on $(\Omega^{(b)}, \mathcal{F}^{(b)}, (\mathcal{F}^{(b)}_t)_{t\geq 0}, \mathbb{P}^{(b)})$ is a $[0,\infty]$-valued Markov process with c\`adl\`ag paths whose laws satisfy  the branching property, that is the law of $Y$  started from $x+y$ is the same as the law of the   sum of two independent copies of $Y$ but issued from $x$ and $y$, respectively. More precisely, for all $\lambda\ge 0$ and $x,y \ge 0 $
\begin{equation*}
	\mathbb{E}^{(b)}_{x+y}\left[e^{-\lambda Y_t}\right] = \mathbb{E}_x^{(b)}\left[e^{-\lambda Y_t}\right] \mathbb{E}_y^{(b)}\left[e^{-\lambda Y_t}\right].
\end{equation*}
Moreover, the law of $Y$ is completely characterised by the latter identity (see for instance Theorem 12.1 in \cite{kyprianou2014fluctuations}), i.e. for all $y>0$
\begin{equation*}
	\mathbb{E}^{(b)}_y\left[e^{-\lambda Y_t}\right] = e^{-yu_t(\lambda)},
\end{equation*}
where $t\mapsto u_t(\lambda)$ is a differentiable function  satisfying the following differential equation
\begin{equation}\label{eq:equationu}
	\frac{\partial }{\partial t} u_t(\lambda) + \psi (u_t(\lambda)) = 0 \quad \text{and} \quad u_0(\lambda) = \lambda,
\end{equation}
where the function $\psi$ is called the \textit{branching mechanism} of the CSBP $Y$. It is well-known that
the function $\psi$ is either the negative of the Laplace exponent of a subordinator or the Laplace exponent of a spectrally negative L\'evy process (see e.g. Theorem 12.1 in \cite{kyprianou2014fluctuations}). In this manuscript, we focus on the subordinator case, that is  we  assume 
\begin{equation}\label{eq_phi_subor}
	\psi(\lambda)= - \delta \lambda - \int_{(0,\infty)} (1-e^{-\lambda x}) \mu(\mathrm{d} x), \qquad \lambda \geq 0,
\end{equation}
where $\delta\ge 0$ and $\mu$ is a  measure supported in $(0,\infty)$ satisfying
\begin{equation}\label{muint}
	\int_{(0,\infty)}(1\wedge x)\mu(\mathrm{d} x)<\infty. 
\end{equation}
It is important to note  that the function $-\psi$ is also known in the literature as a \textit{Bernstein function}, see for instance the monograph of Schilling et al. \cite{ScSVon} for further details about this class of functions.

In this manuscript,  we are interested in a particular extension of CSBPs by considering a  random  external force which affects the dynamics of the previous model. More precisely, we consider continuous-state branching processes in a random environment. Roughly speaking, a process in this class is a time-inhomogeneous Markov process taking values in $[0,\infty]$ with $0$ and 
$\infty$ as absorbing states. Furthermore, such processes satisfies a quenched branching property; that is  conditionally on the environment,  the process started from $x+y$ is distributed as the  sum of two independent copies of the same process but issued from $x$ and $y$, respectively.

Actually the subclass of CSBPs  in random environment that we are interested in pertains to instances where the external stochastic perturbation is driven by an independent L\'evy process. This family of processes is known as 
\emph{continuous-state branching processes in L\'evy environments} (or CBLEs for short) and 
its construction have been given  by  He et al.
\cite{he2018continuous} and by  Palau and Pardo \cite{palau2018branching}, independently, as the unique non-negative strong solution of a stochastic differential equation which will  be specified below.   

We now construct the demographic or branching terms of the model.
Let  $N^{(b)}(\mathrm{d} s , \mathrm{d} z, \mathrm{d} u)$ be a  $(\mathcal{F}_t^{(b)})_{t\geq 0}$-adapted Poisson random measure on $\mathbb{R}^3_+$ with intensity $\mathrm{d} s \mu(\mathrm{d} z)\mathrm{d} u$, where $\mu $ is defined as above and satisfies the integral condition in \eqref{muint}.  On the other hand, for the environmental term, we  consider another filtered probability space $(\Omega^{(e)}, \mathcal{F}^{(e)},(\mathcal{F}^{(e)}_t)_{t\geq 0}, \mathbb{P}^{(e)})$   satisfying the usual hypotheses.   Let us consider $\sigma \geq 0$ and $\alpha$  real constants;  and $\pi$  a measure concentrated on $\mathbb{R}\setminus\{0\}$ such that $$\int_{\mathbb{R}} (1\land z^2)\pi(\mathrm{d} z)<\infty.$$ Suppose that \ $(B_t^{(e)}, t\geq 0)$ \ is a $(\mathcal{F}_t^{(e)})_{t\geq0}$ - adapted standard Brownian motion, $N^{(e)}(\mathrm{d} s, \mathrm{d} z)$ is a $(\mathcal{F}_t^{(e)})_{t\geq 0}$ - Poisson random measure on $\mathbb{R}_+ \times \mathbb{R}$ with intensity $\mathrm{d} s \pi(\mathrm{d} z)$, and $\widetilde{N}^{(e)}(\mathrm{d} s, \mathrm{d} z)$ its compensated version. We denote by $S=(S_t, t\geq 0)$ a L\'evy process, that is  a process with  stationary and independent increments and  c\`adl\`ag paths, with  the following L\'evy-It\^o decomposition
\begin{equation*}\label{eq_ambLevy}
	S_t = \alpha t + \sigma B_t^{(e)} + \int_{0}^{t} \int_{(-1,1)} (e^z - 1) \widetilde{N}^{(e)}(\mathrm{d} s, \mathrm{d} z) + \int_{0}^{t} \int_{(-1,1)^c} (e^z - 1) N^{(e)}(\mathrm{d} s, \mathrm{d} z).
\end{equation*}
Note that  $S$ has no jumps smaller than or equals to -1 since the size of the jumps are given by the map $z\mapsto e^z-1$.

In our setting, the population size has no impact on the evolution of the environment and we are considering independent processes for the demography and the environment. More precisely, we work now on the space $(\Omega, \mathcal{F}, (\mathcal{F}_t)_{t\geq 0}, \mathbb{P})$ which is the direct product of the two probability spaces defined above, that is to say, $\Omega := \Omega^{(e)} \times \Omega^{(b)}, \mathcal{F}:= \mathcal{F}^{(e)}\otimes \mathcal{F}^{(b)},  \mathcal{F}_t:=  \mathcal{F}^{(e)}_t \otimes  \mathcal{F}^{(b)}_t$ for $t\geq0$, and $ \mathbb{P}:=\mathbb{P}^{(e)} \otimes \mathbb{P}^{(b)} $. Therefore,
\textit{the continuous-state branching process $Z=(Z_t, t\geq 0)$ in a L\'evy environment $S$ }  is defined on  $(\Omega, \mathcal{F}, (\mathcal{F}_t)_{t\geq 0}, \mathbb{P})$ as the unique non-negative strong solution of the following SDE
\begin{equation}\label{CBILRE}
	Z_t = Z_0 +\delta \int_{0}^{t} Z_s \mathrm{d} s + \int_{0}^{t} \int_{[0,\infty)} \int_{0}^{Z_{s-}}z N^{(b)} (\mathrm{d} s, \mathrm{d} z, \mathrm{d} u)   + \int_{0}^{t} Z_{s-} \mathrm{d} S_s.
\end{equation}
%We denote by $ \mathbb{P}_{z}$ the law of the process $Z$ starting at $z$. 
According to Theorem 1 in \cite{palau2018branching}, the SDE \eqref{CBILRE} has pathwise uniqueness and strong solution  up to explosion and by convention here it is identically equal to $\infty$ after the explosion time.  Further, the process $Z$ satisfies the strong Markov property and the \textit{quenched branching property}.  For further details, we refer to He et al. \cite{he2018continuous} or  Palau and Pardo \cite{palau2018branching} in more general settings.
%\nat{Note that, without lost of generality, we can assume that $\delta=0$ in the branching mechanism $\psi$ given in \eqref{eq_phi_subor}, since in the SDE \eqref{CBILRE} it is possible to include this drift term in the drift of the Lévy process. Henceforth, we shall assume, that $\delta=0$.}

%The long-term behaviour of the non-extinction probability for CBLEs has attracted considerable attention in the last decade (see e.g. \cite{bansaye2013extinction, boeinghoff2011branching,cardona2021speed, cardona2023speed, li2018asymptotic, palau2016asymptotic}),  however, up to our knowledge, little is known about the event of explosion and there  is only one result in the literature about the speed of the non-explosion probability  for a specific CBLE, see Proposition 2.1 in Palau et al. \cite{palau2016asymptotic}. 

 According to He et al. \cite{he2018continuous} and Palau and Pardo \cite{palau2018branching}, the long-term behaviour of the process $Z$ is deeply related to the behaviour and fluctuations of the L\'evy process $\xi = ( \xi_t , t \ge  0)$, defined as follows
\begin{equation}\label{eq_envir2}
	\xi_t = \overline{\alpha} t + \sigma B_t^{(e)} + \int_{0}^{t} \int_{(-1,1)} z \widetilde{N}^{(e)}(\mathrm{d} s, \mathrm{d} z) + \int_{0}^{t} \int_{(-1,1)^c}z N^{(e)}(\mathrm{d} s, \mathrm{d} z),
\end{equation}
where
\begin{equation*}
	\overline{\alpha} := \alpha+\delta -\frac{\sigma^2}{2} - \int_{(-1,1)} (e^z -1 -z) \pi(\mathrm{d} z).
\end{equation*}
Note that, both processes $(S_t, t\geq 0)$ and $(\xi_t, t\geq 0)$ generate the same filtration. Actually, the process $\xi$ is obtained from $S$ by changing  the drift term and the jump sizes as follows
\[
\alpha \mapsto \overline{\alpha}\qquad \textrm{and} \qquad \Delta S_t\mapsto \ln \left(\Delta S_t +1\right), 
\]
where $\Delta S_t=S_t-S_{t-}$, for $t\ge 0$. Furthermore, we point out that the drift term $\overline{\alpha}$ involves both branching and environment parameters. The relevance of this process stems from the fact that, in the finite mean case, the process  $(Z_te^{-\xi_t}, t\geq 0)$ is a quenched martingale which allows to study the long-term behaviour of the process $Z$ (see Proposition 1.1 in \cite{bansaye2021extinction}). Moreover, the law of the process $(Z_te^{-\xi_t}, t\geq 0)$ can be  characterised via a backward differential equation which   is the analogue to \eqref{eq:equationu} when the environment is fixed.  In the infinite mean case, it is no longer true that  $(Z_te^{-\xi_t}, t\geq 0)$  is a quenched martingale, however, in this paper we show that the law of $(Z_te^{-\xi_t}, t\geq 0)$ can be also characterised via a backward differential equation.
%(see Proposition 2 in \cite{palau2018branching} or Theorem 3.4 in \cite{he2018continuous}). 

Recall that  we say that $Z$ is a \textit{conservative} process  if
\begin{equation}\label{eq_conservative}
	\mathbb{P}_z(Z_t<\infty)=1,\quad \quad \text{for all}\quad t>0,
\end{equation}
where $\mathbb{P}_z$ denotes the law of $Z$ starting at  $z\geq 0$. Thus when we think about the non-explosion event, the first question that might arise is: under which conditions the process $Z$ is  conservative?   When the environment is  fixed, Grey in \cite{grey1974asymptotic} provided a necessary and sufficient condition for the process to be conservative which depends on the integrability at $0$ of  the associated branching mechanism $\psi$.   
%$z\mapsto (|\psi(z)|)^{-1}$ near 0, where $\psi$ is the branching mechanism. 
More precisely,  a continuous-state branching process  is conservative if and only if 
\begin{equation*}
	\int_{0+} \frac{1}{|\psi(\lambda)|} \mathrm{d} \lambda = \infty.
\end{equation*}
Observe that, a necessary condition is that $\psi(0)=0$ and a sufficient condition is that $\psi(0)=0$ and $|\psi'(0+)|<\infty$ (see for instance Theorem 12.3 in  \cite{kyprianou2014fluctuations}).  In contrast, in the particular case when the environment is driven by a  L\'evy process, Bansaye et al. \cite{bansaye2021extinction} furnish  a necessary condition under which the process $Z$ is conservative. They proved that if the branching mechanism satisfies $|\psi'(0+)|<\infty$, then the associated CBLE is conservative (see Lemma 7 in \cite{bansaye2021extinction}). It is worth noting that these results remain valid when the branching mechanism is given by the Laplace exponent of a spectrally negative L\'evy process.  Nevertheless, our focus here is on the case when the branching mechanism is given by \eqref{eq_phi_subor} and satisfies $\psi'(0+)=-\infty$. So our first aim is twofold, first we characterise the law of the process  $(Z_te^{-\xi_t}, t\geq 0)$  in the infinite mean case, which up to our knowledge is unknown, and then provide necessary and sufficient conditions for conservativeness when the branching mechanism is given as in \eqref{eq_phi_subor}  and satisfies $\psi'(0+)=-\infty$. 
%Here we are interested in the case when $\psi'(0+)=-\infty$. 
%Recall from the classical theory of continuous-state branching processes that the branching mechanism is either the negative of the Laplace exponent of a subordinator or the Laplace exponent of a spectrally negative L\'evy process. Here we only focus on the subordinator case. 

%The long-term behaviour of the non-extinction probability for CBLEs has attracted attention in the last decade (see e.g. \cite{bansaye2013extinction, boeinghoff2011branching,cardona2021speed, cardona2023speed, li2018asymptotic, palau2016asymptotic}). However, up to our knowledge, little is known about the event of explosion, and only one result in the literature addresses the speed of the non-explosion probability for a specific CBLE, see Proposition 2.1 in Palau et al. \cite{palau2016asymptotic}. \nat{We aim to understand the event of non-explosion  in a more general setting.}

Our second aim deals with the asymptotic behaviour of the non-explosion probability which,  up to our knowledge, has only been  studied for the case where the associated branching mechanism is   given by the negative of the Laplace exponent of a stable subordinator (see Proposition 2.1 in  \cite{palau2016asymptotic}), that is  
\begin{equation}\label{est}
\psi(\lambda)= C\lambda^{1+\beta},\quad \text{for all}\quad \lambda\ge0,
\end{equation}
where  $ \beta \in (-1,0)$ and $C$ is a negative constant. According to \cite{palau2018branching},  the non-explosion probability for a  CBLE  with branching mechanism as in \eqref{est} (stable CBLE for short) is given by
\begin{equation}\label{eq_noexplosion}
	\mathbb{P}_z(Z_t<\infty ) = \mathbb{E}\left[\exp\left\{-z\big(\beta C\texttt{I}_{0,t}(\beta \xi)\big)^{-1/\beta}\right\}\right], \qquad z\geq 0,
\end{equation}
where $\texttt{I}_{0,t}(\beta \xi)$  denotes the exponential functional of  the L\'evy process $\beta \xi$,  i.e.
\begin{equation}\label{eq_expfuncLevy}
	\texttt{I}_{0,t}(\beta \xi): = \int_{0}^{t} e^{-\beta \xi_s} \mathrm{d} s.
\end{equation}
Observe from  \eqref{eq_noexplosion},  that the process $Z$ explodes  with positive probability. Hence our second aim  is study  the rates of the non-explosion probability of CBLEs in a more general setting rather than the stable case. 

According to Palau et al. \cite{palau2016asymptotic} there are three different regimes for the asymptotic behaviour of the non-explosion probability that depends only  on the mean of the underlying L\'evy process $\xi$. They called these regimes: \textit{subcritical, critical} and  \textit{supercritical explosive} depending on whether the mean of  $\xi$ is negative, zero or positive  (see Proposition 2.1 in  \cite{palau2016asymptotic}).
%More precisely, they proved the following result.
%
%\begin{theorem}[Palau et al. \cite{palau2016asymptotic}]
%	Suppose that condition \eqref{eq_moments} holds with $\vartheta^-<0<\vartheta^+$. Let $(Z_t, t \geq 0)$ be the stable CSBP with index $\beta \in (-1,0)$,  $Z_0=z>0$ and in a L\'evy environment. 
%	\begin{enumerate}
%		\item Subcritical-explosion. If $ \Phi'_{\xi}(0+)<0$, then 
%		$$ \lim_{t \to \infty} \mathbb{P}_z(Z_t<\infty) = \mathbb{E}^{(e)}\Big[\exp\left\{-z\big(\beta c 	\textnormal{\texttt{I}}_{0,\infty}(\beta \xi)\big)^{-1/\beta}\right\}\Big] >0.$$
%		\item Critical-explosion. If $ \Phi'_{\xi}(0+)=0$, then for each $z>0$, there exists $c_1(z)>0$ such that
%		$$ \lim_{t \to \infty} t^{1/2}\mathbb{P}_z(Z_t<\infty) =c_1(z).$$
%		\item Supercritical-explosion. Assume that $ \Phi'_{\xi}(0+)>0$,  then for each $z>0$, there exists $c_2(z)>0$ such that
%		$$ \lim_{t \to \infty} t^{3/2} e^{-t\Phi_\xi(\gamma)}\mathbb{P}_z(Z_t<\infty) =c_2(z),$$
%		where $\gamma \in (\vartheta^-,0)$ is such that $\Phi_\xi'(\gamma)=0$.
%	\end{enumerate}
%\end{theorem} 
Up to our knowledge, this is the only known result in the literature about explosion rates for CBLEs. 
%As we said before, we study the explosion rates for CBPLEs for a more general setting.

  In particular, we study the speed of the non-explosion probability of the process $Z$, when the branching mechanism $\psi$ is given as in \eqref{eq_phi_subor},
 in the {\it critical} and {\it subcritical explosive} regimes. In particular, we show that  in the subcritical explosive regime, i.e. when the auxiliary L\'evy process $\xi$ drifts to $-\infty$, and under an integrability condition, the limit of the non-explosion probability is positive. In the critical regime, i.e. when $\xi$ oscillates, and in particular when $\xi$ satisfies   the so-called Spitzer's condition plus an integrability condition, the  non-explosion probability decays as a regularly varying function at $\infty$. Our arguments  use strongly fluctuation theory and  the asymptotic behaviour of exponential functionals of L\'evy processes. 

The supercritical explosive regime, i.e. when $\xi$ drifts to $\infty$, remains unknown, except in the stable case. We may expect, similarly as in the stable case,  that under  some positive exponential moments of $\xi$, the   non-explosion probability decays exponentially as time increases. But it seems that this case requires other techniques than the ones developed in this article.

%In order to present our results, we require 
%\subsection{Preliminaries on L\'evy processes}\label{sec_defandprop}

As we said before, the long-term behaviour of the non-explosion probability of  $Z$ is deeply related to the behaviour and fluctuations of  $\xi$. Therefore, a few knowledge on fluctuation theory of L\'evy process are required in order to state our main results. 
\subsection{Preliminaries on L\'evy processes}\label{sec_defandprop}
For simplicity, we denote  by $\mathbb{P}^{(e)}_x$  the law of the process $\xi$ 
starting from $x\in \mathbb{R}$, 
and when $x=0$, we use the notation $\mathbb{P}^{(e)}$ for $\mathbb{P}^{(e)}_0$ (resp. $\mathbb{E}^{(e)}$ for $\mathbb{E}_0^{(e)}$).   Let us denote by $\widehat{\xi}=-\xi$ the dual process. 
For every $x\in \mathbb{R}$,  let $\widehat{\mathbb{P}}_x^{(e)}$ be the law of $x+\xi$ under $\widehat{\mathbb{P}}^{(e)}$, that is the law of $\widehat{\xi}$ under $\mathbb{P}_{-x}^{(e)}$. In the sequel, we assume that $\xi$ is not a compound Poisson process since it is possible that in this case the process visits the same maxima or minima at distinct times which can make our analysis more involved.

Let us introduce the running infimum  and supremum  of $\xi$, by
$\underline{\xi}=(\underline{\xi}_t, t\geq 0)$ and $\overline{\xi}= (\overline{\xi}_t, t\geq 0)$, with
\begin{equation}\label{eq:runsupinf}
	\underline{\xi}_t = \inf_{0\leq s\leq t} \xi_s \qquad \textrm{ and } \qquad \overline{\xi}_t = \sup_{0 \leq s \leq t} \xi_s, \qquad t \geq 0.
\end{equation}
It is well-known that the  reflected process $\xi-\underline{\xi}$ (resp. $\overline{\xi}-\xi$)  is a Markov process with respect to the  filtration $(\mathcal{F}^{(e)}_t)_{t\geq 0}$, see for instance
Proposition VI.1 in \cite{bertoin1996levy}.  We 
denote by $L=(L_t, t \geq 0 )$ and $\widehat{L}=(\widehat{L}_t, t \geq 0 )$  the local times of $\overline{\xi}-\xi$ and $\xi-\underline{\xi}$ at $0$, respectively,  in the sense of Chapter IV in \cite{bertoin1996levy}.
% If $0$ is regular for $(-\infty,0)$ or regular downwards, i.e.
%\[
%\mathbb{P}^{(e)}(\tau^-_0=0)=1,
%\] 
%where $\tau^{-}_0=\inf\{s\ge 0:  \xi_s\le 0\}$, then $0$ is regular for the reflected process $\xi-\underline{\xi}$ and then, up to a multiplicative constant, $\widehat{L}$ is the unique additive functional of the reflected process whose set of increasing points is $\{t:\xi_t=\underline{\xi}_t\}$. If $0$ is not regular downwards then the set $\{t:\xi_t=\underline{\xi}_t\}$ is discrete and we define the local time $\widehat{L}$ as the counting process of this set. The same properties holds for $L$ by duality, i.e. if $0$ is regular upwards then, up to a multiplicative constant, $L$ is the unique additive functional  whose set of increasing points is $\{t:\xi_t=\overline{\xi}_t\}$, otherwise $L$ is the counting process of this set.
Next, define  
\begin{equation}\label{defwidehatH}
	H_t=\overline{\xi}_{L_t^{-1}} \qquad \text{and}\qquad \widehat{H}_t=-\underline{\xi}_{\widehat{L}_t^{-1}} \qquad t\ge 0,
\end{equation}
where $L^{-1}$ and $\widehat{L}^{-1}$ are the right continuous inverse of the local times $L$ and $\widehat{L}$, respectively.  The range of the inverse local times, $L^{-1}$ (resp. $\widehat{L}^{-1}$), corresponds to the set of times at which new maxima (resp. new minima) occur. Hence, the range of the process $H$ (resp. $\widehat{H}$) corresponds to the set of new maxima (resp. new minima). The pairs $(L^{-1}, H)$ and  $(\widehat{L}^{-1}, \widehat{H})$ are bivariate subordinators known as the ascending and descending ladder processes, respectively.  The  Laplace transform of $(L^{-1}, H)$ is such that for $ \theta,\lambda \geq 0$, 
\begin{equation}\label{eq_kappa}
	\mathbb{E}^{(e)}\left[\exp\left\{-\theta L^{-1}_t-\lambda H_t \right\}\right]=\exp\left\{-t \kappa(\theta, \lambda)\right\},\qquad  t\ge 0, 
\end{equation}
where $\kappa(\cdot,\cdot)$ denotes its bivariate Laplace exponent (resp. $\widehat{\kappa}(\cdot, \cdot)$ for the  descending ladder process). Similarly to the  absorption rates studied in \cite{bansaye2021extinction, cardona2021speed, cardona2023speed}, the asymptotic analysis  of the event  of explosion and the role of the initial condition involve the renewal functions $U$ and $\widehat{U}$, associated to the supremum and infimum respectively,  which are defined, as follows
\begin{equation}\label{eq_renewalfns}
	U(x) := \mathbb{E}^{(e)}\left[\int_{[0,\infty)} \mathbf{1}_{\left\{H_t\leq x\right\}} \mathrm{d} t\right] \quad 
	\textrm{and}\quad
	\widehat{U}(x) := \mathbb{E}^{(e)}\left[\int_{[0,\infty)} \mathbf{1}_{\left\{\widehat{H}_t\leq x\right\}} \mathrm{d} t\right], \qquad x>0.
\end{equation}
The renewal functions $U$ and $\widehat{U}$ are  finite, subadditive, continuous and increasing. Moreover, they are identically 0 on $(-\infty, 0]$,  strictly positive on $(0,\infty)$  and satisfy 
 	\begin{equation}
 		\label{grandO}
 		U(x)\leq C_1 x \qquad\textrm{and}\qquad \widehat{U}(x)\leq C_2 x \quad \text{  for any } \quad x\geq 0,
 	\end{equation}
 	where $C_1, C_2$ are finite constants  (see for instance  Lemma 6.4 and Section 8.2 in the monograph of Doney 
 	\cite{doney2007fluctuation}). Moreover $U(0)=0$ if $0$ is regular upwards  and $U(0)=1$ otherwise, similarly $\widehat{U}(0)=0$ if $0$ is regular upwards  and $\widehat{U}(0)=1$ otherwise.
	
Roughly speaking, the renewal function $U(x)$ (resp.  $\widehat{U}(x)$)  ``measures'' the amount of time that the ascending (resp. descending) ladder height process spends on the interval $[0,x]$ and in particular induces a measure on $[0,\infty)$ which is known as the renewal measure.  The latter implies
 \begin{equation}\label{bivLap}
 	\int_{[0,\infty)} e^{-\theta x} U( \mathrm{d}x) = \frac{1}{\kappa(0,\theta)},  \qquad  \theta>0.
\end{equation}
A similar expression holds true for the Laplace transform of the measure $\widehat{U}(\mathrm{d}x)$ in terms of $\widehat{\kappa}$. For a more in-depth account of fluctuation theory, we refer the reader to the monographs of Bertoin \cite{bertoin1996levy}, Doney \cite{doney2007fluctuation} and Kyprianou \cite{kyprianou2014fluctuations}.

\subsection{Main results}\label{sec_explosionmainresult}
Our first main result determines the law of the reweighted process $(Z_te^{-\xi_t}, t\geq 0)$ via a backward  differential equation in terms of $\psi_0(\lambda):=\psi(\lambda)+\lambda\delta$ and the process $\xi$. The function $\psi_0$ plays now the role of the  branching mechanism for the  CBLE $Z$ since the demographic term $\delta$ can be added to the environment $S$ without changing the structure of $S$ and the definition of $Z$ in \eqref{CBILRE}. In fact,  we observe that the resulting process $(\delta t+S_t, t\ge 0)$ is still a L\'evy process. To avoid any confusion we call $\psi_0$ as the {\it pure branching mechanism} of the CBLE $Z$.

We denote by $ \mathbb{P}_{(z,x)}$ the law of the process $(Z, \xi)$ starting at $(z,x)$.

 	\begin{theorem}\label{teo_existencia}
 		For every $z > 0$, $x\in \mathbb{R}$, $\lambda\geq 0$ and $t\geq s\geq 0$, we have
 	\[
 			\mathbb{E}_{(z,x)}\Big[\exp\{-\lambda Z_te^{-\xi_t}\}\  \Big|\Big. \  \xi, \mathcal{F}^{(b)}_s\Big] = \exp\big\{- Z_se^{-\xi_s}v_t(s,\lambda,\xi)\big\},
 			\]
 		where for any $\lambda, t \geq 0$, the function $(v_t(s,\lambda, \xi), s\in [0,t])$ is an a.s. solution of the backward differential equation 
 		\begin{equation}\label{eq_BDEv}
 			\frac{\partial }{\partial s} v_t(s,\lambda,\xi) =e^{\xi_s} \psi_0\big(v_t(s,\lambda ,\xi)e^{-\xi_s}\big), \quad  \mbox{a.e.}\quad   s\in[0,t]
 		\end{equation}
 		and with terminal condition $v_t(t,\lambda, \xi)=\lambda$.  In particular, for every $z, \lambda, t > 0$ and $x\in \mathbb{R}$, we have
 		\begin{equation}\label{eq_quenchedlaw}
 			\mathbb{E}_{(z,x)}\Big[\exp\big\{-\lambda Z_t e^{-\xi_t}\big\} \Big|\Big. \  \xi\Big] = \exp\left\{-z v_t(0,\lambda e^{-x} ,\xi-x)\right\}.
 		\end{equation}
 	\end{theorem}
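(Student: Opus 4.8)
\emph{Proof strategy.} The idea is to freeze the environment and show that, in the enlarged filtration $\mathcal{G}_s:=\sigma(\xi)\vee\mathcal{F}^{(b)}_s$, the process $s\mapsto\exp\{-Z_se^{-\xi_s}v_t(s,\lambda,\xi)\}$ is a martingale on $[0,t]$; since its terminal value is $\exp\{-\lambda Z_te^{-\xi_t}\}$ (because $v_t(t,\lambda,\xi)=\lambda$), taking $\mathbb{E}[\,\cdot\mid\mathcal{G}_s]$ will give the first assertion. All computations may be carried out in $(\mathcal{G}_s)$ because $N^{(b)}$ is independent of $S$, hence of $\xi$, so $N^{(b)}$ remains a Poisson random measure with intensity $\mathrm{d}s\,\mu(\mathrm{d}z)\,\mathrm{d}u$ in this larger filtration. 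The first step is to absorb the demographic drift into the environment: writing $\widetilde S_t:=\delta t+S_t$, equation \eqref{CBILRE} becomes $Z_t=Z_0+\int_0^t\int_{[0,\infty)}\int_0^{Z_{s-}}z\,N^{(b)}(\mathrm{d}s,\mathrm{d}z,\mathrm{d}u)+\int_0^tZ_{s-}\,\mathrm{d}\widetilde S_s$, and by the very definition of $\overline\alpha$ in \eqref{eq_envir2} the auxiliary L\'evy process attached to $\widetilde S$ is exactly $\xi$, i.e.\ $(e^{\xi_t})_{t\ge0}$ is the Dol\'eans--Dade exponential of $\widetilde S$. Applying the integration-by-parts formula for semimartingales to $Z_s\,e^{-\xi_s}$ and using this identity, the drift, Brownian and environment-jump contributions cancel and one obtains, up to the explosion time,
\[
R_t=R_0+\int_0^t\int_{[0,\infty)}\int_0^{Z_{s-}}z\,e^{-\xi_{s-}}\,N^{(b)}(\mathrm{d}s,\mathrm{d}z,\mathrm{d}u),\qquad R_s:=Z_se^{-\xi_s},
\]
a time-inhomogeneous pure-branching equation driven only by $N^{(b)}$ in the frozen environment; this parallels computations of \cite{he2018continuous,palau2018branching,bansaye2021extinction} in the finite-mean case.

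Before exploiting this, one must check that \eqref{eq_BDEv} is uniquely solvable on $[0,t]$ for each fixed trajectory of $\xi$. Since $\psi_0\le0$ on $[0,\infty)$, a solution with terminal value $\lambda>0$ is non-increasing, hence $v_t(s,\lambda,\xi)\ge\lambda$ on $[0,t]$; as the trajectory of $\xi$ is bounded on the compact interval $[0,t]$, the quantity $v_t(s,\lambda,\xi)e^{-\xi_s}$ stays in a fixed compact subset of $(0,\infty)$, on which $\psi_0'$ is bounded (using $\int_{(0,\infty)}(1\wedge z)\mu(\mathrm{d}z)<\infty$), so $v\mapsto e^{\xi_s}\psi_0(ve^{-\xi_s})$ is Lipschitz in $v$ uniformly in $s\in[0,t]$, while the bound $|\psi_0(u)|\le u\int_{(0,\infty)}(1\wedge z)\mu(\mathrm{d}z)$ for $u\ge1$ prevents blow-up; Picard--Lindel\"of applies, and the case $\lambda=0$ is trivial with $v\equiv0$. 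Next, fixing $\lambda,t>0$, abbreviating $v(s):=v_t(s,\lambda,\xi)$ and setting $G(s):=\exp\{-R_sv(s)\}\in[0,1]$, I would apply It\^o's formula to $G$ on $[0,t]$, first stopping at $\tau_k:=\inf\{s:R_s\ge k\}$. Since $R$ is a finite-variation pure-jump process in the frozen environment and $v$ is absolutely continuous, only the $\mathrm{d}v$-term, equal to $-\int_0^sG(u-)R_{u-}e^{\xi_u}\psi_0(v(u)e^{-\xi_u})\,\mathrm{d}u$, and the jumps of $R$ contribute, and the $(\mathcal{G}_s)$-compensator of the jump part is
\[
\int_0^{s}G(u-)\,Z_{u-}\!\!\int_{(0,\infty)}\!\!\big(e^{-ze^{-\xi_{u-}}v(u)}-1\big)\mu(\mathrm{d}z)\,\mathrm{d}u=\int_0^{s}G(u-)\,R_{u-}\,e^{\xi_{u-}}\psi_0\big(v(u)e^{-\xi_{u-}}\big)\,\mathrm{d}u,
\]
using $Z_{u-}=R_{u-}e^{\xi_{u-}}$ and $\psi_0(a)=\int_{(0,\infty)}(e^{-az}-1)\mu(\mathrm{d}z)$. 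Because $\xi_u=\xi_{u-}$ for Lebesgue-a.e.\ $u$ and $v$ solves \eqref{eq_BDEv}, these two terms cancel, so $G$ is a bounded martingale on $[0,t\wedge\tau_k]$; letting $k\to\infty$ by dominated convergence ($0\le G\le1$, $G(t)=0$ on the explosion event) shows $(G(s))_{s\in[0,t]}$ is a $(\mathcal{G}_s)$-martingale, and $\mathbb{E}[G(t)\mid\mathcal{G}_s]=G(s)$ is the desired identity; the case $\lambda=0$ is immediate.

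For the ``in particular'' part, I would take $s=0$ in the identity above and use $Z_0=z$, $\xi_0=x$ under $\mathbb{P}_{(z,x)}$, getting $\mathbb{E}_{(z,x)}[\exp\{-\lambda Z_te^{-\xi_t}\}\mid\xi]=\exp\{-ze^{-x}v_t(0,\lambda,\xi)\}$. Equation \eqref{eq_quenchedlaw} then follows from the scaling identity $e^{-x}v_t(s,\lambda,\xi)=v_t(s,\lambda e^{-x},\xi-x)$, $s\in[0,t]$, which is a consequence of uniqueness in \eqref{eq_BDEv}: the left-hand side has terminal value $\lambda e^{-x}$ at $s=t$ and satisfies $\partial_s\big(e^{-x}v_t(s,\lambda,\xi)\big)=e^{\xi_s-x}\psi_0\big((e^{-x}v_t(s,\lambda,\xi))e^{-(\xi_s-x)}\big)$, i.e.\ it solves \eqref{eq_BDEv} driven by the shifted trajectory $\xi-x$ with terminal parameter $\lambda e^{-x}$; substituting $s=0$ yields \eqref{eq_quenchedlaw}.

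The delicate point is the first step: turning \eqref{CBILRE} into the pure-branching equation for $Ze^{-\xi}$ requires careful bookkeeping of the Brownian and of the compensated versus non-compensated jump contributions of the environment, together with the stochastic-exponential identity linking $\xi$ and $\widetilde S$. Once $R$ has been identified, the remaining steps are routine; a minor additional point is that all stochastic-calculus manipulations must be performed in the enlarged filtration $(\mathcal{G}_s)$, which is legitimate precisely because $N^{(b)}$ and $\xi$ are independent.
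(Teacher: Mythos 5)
Your proposal is correct and its skeleton is the same as the paper's: first reduce $Z_te^{-\xi_t}$ to the pure-jump equation $R_t=R_0+\int_0^t\int\int ze^{-\xi_{s-}}N^{(b)}(\mathrm{d}s,\mathrm{d}z,\mathrm{d}u)$ (the paper carries out the full It\^o computation on $f(x,y)=xe^{-y}$ that you summarise via the Dol\'eans--Dade identity $e^{\xi}=\mathcal{E}(\delta\cdot+S)$), then apply It\^o to $\exp\{-R_sv_t(s,\lambda,\xi)\}$ and observe that the $\mathrm{d}v$-drift cancels the compensator of the jump part, exactly as in the paper. The one substantive divergence is in the solvability of \eqref{eq_BDEv}: the paper invokes the extended Carath\'eodory \emph{existence} theorem, using concavity of the Bernstein function $|\psi_0|$ to get the linear bound $|\psi_0(\theta)|\le \mathtt{c}+d\theta$, and obtains existence only; you instead combine the a priori bounds $v\ge\lambda$ and linear growth with the local boundedness of $\psi_0'$ away from $0$ (which does follow from \eqref{muint}) to get existence \emph{and uniqueness}, and you then use uniqueness to justify the scaling identity $e^{-x}v_t(s,\lambda,\xi)=v_t(s,\lambda e^{-x},\xi-x)$ behind \eqref{eq_quenchedlaw} --- a step the paper leaves implicit, so your route actually closes a small gap in the exposition. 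Two minor caveats: since $s\mapsto\xi_s$ is only c\`adl\`ag, the classical Picard--Lindel\"of theorem does not literally apply and you need its Carath\'eodory-type variant for right-hand sides that are merely measurable in time and Lipschitz in the state variable (your bounds do deliver its hypotheses); and your uniqueness argument as stated covers $\lambda>0$, which suffices here because $\lambda=0$ is handled separately and the limits $\lambda\downarrow 0$ used later in the paper are monotone.
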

 	
 	Our second main result furnishes a necessary and sufficient condition for a CBLE to be conservative, in our setting. The result is an extension of the original characterisation given by Grey \cite{grey1974asymptotic} in the classical case for continuous-state branching process with constant environment. 
 	
 	\begin{proposition}\label{lem_condsufnec}
 		 A continuous-state branching process in a L\'evy environment with pure branching mechanism
 		$\psi_0$ is conservative if and only if  
 		\begin{equation}\label{eq_Grey}
 			\int_{0+} \frac{1}{|\psi_0(\lambda)|} \mathrm{d} \lambda = \infty.
 		\end{equation}
 	\end{proposition}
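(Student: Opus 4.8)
The plan is to reduce conservativeness to a statement about the $\lambda\downarrow 0$ behaviour of the function $v_t(0,\lambda,\xi)$ supplied by Theorem~\ref{teo_existencia}, and then to pin that behaviour down by sandwiching the backward differential equation \eqref{eq_BDEv} between two \emph{constant-environment} equations whose solutions are explicit. Throughout I fix $z>0$, $t>0$ and take the environment to start at $0$, so $\mathbb{P}_z=\mathbb{P}_{(z,0)}$. First I would observe that since $\xi_t<\infty$ a.s.\ the events $\{Z_t<\infty\}$ and $\{Z_te^{-\xi_t}<\infty\}$ coincide, and letting $\lambda\downarrow 0$ in \eqref{eq_quenchedlaw} (with $x=0$) by conditional monotone convergence yields
\[
\mathbb{P}_z(Z_t<\infty\mid \xi)=\exp\big\{-z\,\ell_t(\xi)\big\},\qquad \ell_t(\xi):=\lim_{\lambda\downarrow 0}v_t(0,\lambda,\xi),
\]
the limit existing because $\lambda\mapsto v_t(0,\lambda,\xi)$ is nondecreasing (comparison for \eqref{eq_BDEv} with the same vector field). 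Taking $\mathbb{E}^{(e)}$ and using $z>0$, the process is conservative if and only if $\ell_t(\xi)=0$ a.s.\ for every $t>0$, so everything reduces to identifying when $\ell_t(\xi)=0$.

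Next I would exploit that, for fixed $\lambda>0$, the solution of \eqref{eq_BDEv} stays at or above $\lambda$ (as $\psi_0\le 0$ forces $s\mapsto v_t(s,\lambda,\xi)$ to be nonincreasing), and that $\xi$ is càdlàg, so $m:=\inf_{0\le s\le t}\xi_s$ and $M:=\sup_{0\le s\le t}\xi_s$ are finite a.s. Writing $h(\theta):=\psi_0(\theta)/\theta$, which is negative and \emph{nondecreasing} because $|\psi_0|$ is concave with $|\psi_0(0)|=0$, equation \eqref{eq_BDEv} reads $\partial_s v=v\,h(ve^{-\xi_s})$ with $v=v_t(s,\lambda,\xi)$, and from $ve^{-\xi_s}\in[ve^{-M},ve^{-m}]$ and the monotonicity of $h$ I obtain the two-sided bound
\[
v\,h\big(v e^{-M}\big)\ \le\ \frac{\partial}{\partial s}v_t(s,\lambda,\xi)\ \le\ v\,h\big(v e^{-m}\big).
\]
For each fixed $\lambda>0$ all arguments remain in $[\lambda e^{-M},\infty)$, where $h$ is finite and locally Lipschitz, so the backward comparison principle applies: comparing with the solutions of $\partial_s\overline v=\overline v\,h(\overline v e^{-m})$ and $\partial_s\underline v=\underline v\,h(\underline v e^{-M})$ carrying the same terminal value $\lambda$ at $s=t$ gives $\overline v(s)\le v_t(s,\lambda,\xi)\le \underline v(s)$ on $[0,t]$.

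Then I would solve each bounding equation explicitly: for a constant level $c$ the substitution $w=ve^{-c}$ turns $\partial_s v=e^{c}\psi_0(ve^{-c})=v\,h(ve^{-c})$ into $\partial_s w=\psi_0(w)$, which after reversing time is exactly \eqref{eq:equationu} with $\psi$ replaced by $\psi_0$; hence the value at $s=0$ equals $e^{c}u_t(\lambda e^{-c})$, where $u_t(\cdot)$ denotes the classical CSBP cumulant associated with $\psi_0$. Evaluating the sandwich at $s=0$ and letting $\lambda\downarrow 0$ gives
\[
e^{m}\,u_t(0+)\ \le\ \ell_t(\xi)\ \le\ e^{M}\,u_t(0+),\qquad u_t(0+):=\lim_{\theta\downarrow 0}u_t(\theta).
\]
Since $m,M$ are finite a.s., $\ell_t(\xi)=0$ a.s.\ if and only if $u_t(0+)=0$. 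By Grey's theorem \cite{grey1974asymptotic} applied to the classical CSBP with branching mechanism $\psi_0$, one has $u_t(0+)=0$ for every $t>0$ precisely when $\int_{0+}|\psi_0(\lambda)|^{-1}\mathrm{d}\lambda=\infty$, and $u_t(0+)>0$ for every $t>0$ otherwise; combined with the first step this establishes \eqref{eq_Grey}.

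The step I expect to be the main obstacle is the rigorous justification of the backward comparison principle in the presence of the singularity of $h$ at the origin, which is precisely the infinite-mean feature $\psi_0'(0+)=-\infty$. The key point is that for each \emph{fixed} $\lambda>0$ the three solutions all stay at or above $\lambda e^{-M}>0$, so $h$ is evaluated only on a compact interval bounded away from $0$ where it is Lipschitz, and the singular regime is reached only in the final monotone limit $\lambda\downarrow 0$, which is handled by the monotone convergences above. A secondary, routine point is to check that the $\mathbb{P}^{(e)}$-null set on which $m$ or $M$ fails to be finite is harmless, which is immediate from the càdlàg property of $\xi$.
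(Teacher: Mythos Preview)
Your argument is correct. The core idea---freezing the environment at its pathwise extremes $m=\underline{\xi}_t$ and $M=\overline{\xi}_t$ to sandwich $v_t(0,\lambda,\xi)$ between two constant-environment quantities---is exactly what the paper does, but the packaging differs. The paper works directly with the separated-variables form
\[
-t=\int_0^t \frac{\mathrm{d}v_t(s,\lambda,\xi)}{e^{\xi_s}\,|\psi_0(e^{-\xi_s}v_t(s,\lambda,\xi))|}
\]
and bounds the integrand using that $|\psi_0|$ is increasing, arriving at inequalities of the type
\[
e^{m-M}\int_{e^{-m}\lambda}^{e^{-m}v_t(0,\lambda,\xi)}\frac{\mathrm{d}z}{|\psi_0(z)|}\ \le\ t\ \le\ e^{M-m}\int_{e^{-M}\lambda}^{e^{-M}v_t(0,\lambda,\xi)}\frac{\mathrm{d}z}{|\psi_0(z)|},
\]
from which the integral test \eqref{eq_Grey} is read off directly by letting $\lambda\downarrow 0$ (one direction) and arguing by contradiction (the other). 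You instead invoke an ODE comparison principle to sandwich $v_t(0,\lambda,\xi)$ between $e^{m}u_t(\lambda e^{-m})$ and $e^{M}u_t(\lambda e^{-M})$, and then outsource the $\lambda\downarrow 0$ analysis to Grey's theorem for the classical CSBP with mechanism $\psi_0$. Both routes are equivalent, since the classical $u_t$ satisfies precisely $\int_{\lambda}^{u_t(\lambda)}|\psi_0(z)|^{-1}\mathrm{d}z=t$; the paper's version is self-contained, while yours is slightly more conceptual in that it makes the reduction to the constant-environment problem explicit. Your handling of the potential obstacle is sound: for each fixed $\lambda>0$ all three solutions remain at or above $\lambda$ on $[0,t]$, so $h(\theta)=\psi_0(\theta)/\theta$ is only evaluated on $[\lambda e^{-M},\infty)$ where it is locally Lipschitz, and the linear growth bound $|\psi_0(\theta)|\le c+d\theta$ (already used in the proof of Theorem~\ref{teo_existencia}) guarantees that $u_t(\cdot)$ is finite, so no blow-up obstructs the comparison.
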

 	% It is important to note that we can establish both previous results in terms of $\psi$ but this will only affect the drift of the auxiliary process $\xi$. 
	 
 In what follows, we assume that  the pure branching mechanism $\psi_0$ satisfies
	\begin{equation}\label{nogrey}
	\int_{0+} \frac{1}{|\psi_0(\lambda)|} \mathrm{d} \lambda< \infty,
	\end{equation}
	or in other words that $Z$ may explode in finite time with positive probability.
	Under such assumption, we are interested in the asymptotic behaviour of the non-explosion probability in the  following  regimes: \textit{subcritical} and  \textit{critical explosive}.

	First we focus on the \textit{subcritical explosive} regime, i.e. when the L\'evy process drifts to $-\infty$. We recall that $\overline{\alpha}$ and $\pi$ are the drift term and the L\'evy measure of  $\xi$, respectively. 
 	%Let $\widehat{\xi}=-\xi$ be the dual process with corresponding drift  $\widehat{\alpha}_1$. 
 	We introduce the following real function
 	$$ A_{\xi}(x):= -\overline{\alpha} + \bar{\pi}^{(-)}(-1) + \int_{-x}^{-1}\bar{\pi}^{(-)}(y)\mathrm{d} y, \qquad \text{for}\quad  x>0,$$
 	where $\bar{\pi}^{(-)}(-x)=\pi(-\infty, -x)$. We also introduce the function
 	\begin{equation}\label{eq_phihat}
 		\Phi_\lambda(u):= \int_{0}^{\infty} \exp\{- \lambda e^{u} y\}  \bar{\mu}(y)\mathrm{d} y, \qquad \textrm{for}\quad \lambda > 0,
 	\end{equation}
	where $\bar{\mu}(x):= \mu(x,\infty)$.
 	Further, let us denote by $\texttt{E}_1$ the exponential integral, i.e.
 	\begin{equation}\label{eq_E1def}
 		\texttt{E}_1(w)= \int_{1}^{\infty} \frac{e^{-wy}}{y}\mathrm{d} y, \qquad w>0.
 	\end{equation}
% \nat{Let us assume the following condition which is an interplay between the L\'evy measures $\pi$ and $\mu$, 
 %	\begin{equation}\label{eq_Axi}\tag{\bf{A}}
 %	\int_{(a,\infty)} \frac{y}{A_{\xi}(y)}|\mathrm{d}\Phi_\lambda(y)|<\infty, \qquad \text{for some}\quad  a>0.
%\end{equation}	}
 
 	We can then formulate the following theorem.
 	\begin{theorem}[Subcritical explosive regime]\label{teo_explosubcritica}
 		Suppose that \eqref{nogrey} holds, $\xi$ drifts to $-\infty$, $\mathbb{P}^{(e)}$ -a.s., and that there exists $\lambda>0$ such that
	\begin{equation}\label{eq_Axi}
 			\int_{(a,\infty)} \frac{y}{A_{\xi}(y)}|\mathrm{d}\Phi_\lambda(y)|<\infty, \qquad \text{for some}\quad  a>0.
 		\end{equation}	
 		Then, for any $z>0$, 
 		\begin{equation*}
 			\lim\limits_{t\to \infty}  \mathbb{P}_{z}(Z_t < \infty) >0.
 		\end{equation*}
 		In particular, if ~$\mathbb{E}^{(e)}\big[\xi_1\big]\in (-\infty, 0)$, then the integral condition \eqref{eq_Axi} is equivalent to 
 		\begin{equation}\label{eq_E1}
 			\int_{0}^{\infty} \normalfont{\texttt{E}_1}(\lambda y) \bar{\mu}(y)\mathrm{d} y<\infty.
 		\end{equation}
 	\end{theorem}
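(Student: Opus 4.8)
\emph{Step 1: reduction.} The plan is to represent the non-explosion probability through the function $v$ of Theorem~\ref{teo_existencia}, to reduce the claim to the $\mathbb{P}^{(e)}$-a.s.\ finiteness of a limiting quantity $\bar v_\infty(\xi)$, to bound $\bar v_\infty(\xi)$ from the backward equation \eqref{eq_BDEv}, and to close the bound using \eqref{eq_Axi} together with the theory of perpetual integrals of L\'evy processes. For the reduction: since $Z\equiv+\infty$ after its explosion time, $\{Z_t<\infty\}$ decreases to $\{Z\text{ never explodes}\}$, so $\lim_{t\to\infty}\mathbb{P}_z(Z_t<\infty)$ exists. Using $\{Z_t<\infty\}=\{Z_te^{-\xi_t}<\infty\}$, the identity $\mathbf 1_{\{w<\infty\}}=\lim_{\lambda\downarrow0}e^{-\lambda w}$ for $w\in[0,\infty]$, \eqref{eq_quenchedlaw} at $x=0$ (so $\mathbb{P}_z$ starts the environment at $0$), and two monotone passages to the limit (recall $\lambda\mapsto v_t(0,\lambda,\xi)$ is nondecreasing), one gets, writing $\bar v_t(s,\xi):=\lim_{\lambda\downarrow0}v_t(s,\lambda,\xi)$,
\[
\mathbb{P}_z(Z_t<\infty)=\mathbb{E}^{(e)}\!\left[\exp\{-z\,\bar v_t(0,\xi)\}\right].
\]
Conditionally on $\xi$ the event $\{Z_t<\infty\}$ is still nonincreasing in $t$, so $t\mapsto\bar v_t(0,\xi)$ is nondecreasing, and by dominated convergence $\lim_{t\to\infty}\mathbb{P}_z(Z_t<\infty)=\mathbb{E}^{(e)}[\exp\{-z\,\bar v_\infty(\xi)\}]$ with $\bar v_\infty(\xi):=\lim_{t\to\infty}\bar v_t(0,\xi)\in[0,\infty]$. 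Hence it suffices to show $\bar v_\infty(\xi)<\infty$, $\mathbb{P}^{(e)}$-a.s.\ (in fact positive probability would already suffice).

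\emph{Step 2: a priori bound from the backward equation.} Each $v_t(\cdot,\lambda,\xi)$ is finite (Theorem~\ref{teo_existencia}) and, since $\psi_0\le0$ and $v_t\ge0$, \eqref{eq_BDEv} makes it nonincreasing on $[0,t]$; thus $\bar v_t(\cdot,\xi)$ is finite, nonincreasing, with $\bar v_t(t,\xi)=0$, and, integrating \eqref{eq_BDEv} over $[s,t]$ and letting $\lambda\downarrow0$ by monotone convergence,
\[
\bar v_t(0,\xi)=\int_0^t e^{\xi_r}\,\bigl|\psi_0\bigl(\bar v_t(r,\xi)e^{-\xi_r}\bigr)\bigr|\,\mathrm{d}r.
\]
An integration by parts in \eqref{eq_phi_subor} gives $|\psi_0(\kappa)|=\kappa\int_0^\infty e^{-\kappa y}\bar\mu(y)\,\mathrm{d}y$ for $\kappa>0$; with the notation of \eqref{eq_phihat} this reads $e^{\xi_r}|\psi_0(\lambda e^{-\xi_r})|=\lambda\,\Phi_\lambda(-\xi_r)$, and $\lambda\mapsto\lambda\Phi_\lambda(u)=e^{-u}|\psi_0(\lambda e^{u})|$ is nondecreasing because $|\psi_0|$ is. Inserting $\bar v_t(r,\xi)\le\bar v_t(0,\xi)$ for $r\in[0,t]$ into the integral above, one obtains
\[
\bar v_t(0,\xi)\ \le\ \bar v_t(0,\xi)\int_0^t\Phi_{\bar v_t(0,\xi)}(-\xi_r)\,\mathrm{d}r,
\]
and therefore $\int_0^t\Phi_{\bar v_t(0,\xi)}(-\xi_r)\,\mathrm{d}r\ge1$ whenever $\bar v_t(0,\xi)>0$.

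\emph{Step 3: closing the bound, and the main obstacle.} Set $h(\lambda):=\int_0^\infty\Phi_\lambda(-\xi_r)\,\mathrm{d}r\in[0,\infty]$; note $\Phi_\lambda$ is nonincreasing in each argument, $\Phi_\lambda(u)=\Phi_1(u+\ln\lambda)$, and $\Phi_1(u)\to0$ as $u\to+\infty$. The decisive point is to recognise condition \eqref{eq_Axi} --- after an integration by parts --- as the criterion for a.s.\ finiteness of the perpetual integral $\int_0^\infty\Phi_{\lambda_0}(-\xi_r)\,\mathrm{d}r$: since $\xi$ drifts to $-\infty$ we have $\sup_r\xi_r<\infty$ a.s.\ and $-\xi$ spends a.s.\ finite time below any fixed level, so convergence of this integral depends only on the tail of $\Phi_{\lambda_0}$ at $+\infty$ weighted against the potential of $-\xi$, which by Erickson-type results is governed by $A_\xi$; this is exactly the content of \eqref{eq_Axi}, whence $h(\lambda_0)<\infty$ $\mathbb{P}^{(e)}$-a.s. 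Granting this, fix a path in the a.s.\ event $\{\xi\to-\infty\}\cap\{h(\lambda_0)<\infty\}$: for $\lambda\ge\lambda_0$ one has $\Phi_\lambda(-\xi_r)\le\Phi_{\lambda_0}(-\xi_r)\in L^1(\mathrm{d}r)$ and $\Phi_\lambda(-\xi_r)\to0$ pointwise as $\lambda\to\infty$, so $h(\lambda)\to0$ by dominated convergence; pick $V>\lambda_0$ with $h(V)<1$. If $\bar v_t(0,\xi)\ge V$ for some $t>0$, then by Step~2 and monotonicity of $\lambda\mapsto\Phi_\lambda$, $1\le\int_0^t\Phi_{\bar v_t(0,\xi)}(-\xi_r)\mathrm{d}r\le\int_0^t\Phi_{V}(-\xi_r)\mathrm{d}r\le h(V)<1$, a contradiction; and if $\bar v_t(0,\xi)<V$ this bound holds trivially. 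Hence $\bar v_t(0,\xi)<V$ for all $t$, so $\bar v_\infty(\xi)\le V<\infty$ on this event, and consequently $\lim_{t\to\infty}\mathbb{P}_z(Z_t<\infty)=\mathbb{E}^{(e)}[\exp\{-z\bar v_\infty(\xi)\}]>0$. I expect the only real difficulty to be this translation of \eqref{eq_Axi} into a.s.\ finiteness of a perpetual integral via $A_\xi$; all else is monotone/dominated convergence and the elementary identities for $\psi_0$ and $\Phi_\lambda$.

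\emph{Step 4: the equivalence with \eqref{eq_E1}.} If $\mathbb{E}^{(e)}[\xi_1]\in(-\infty,0)$ then $A_\xi$ is nondecreasing with a finite positive limit $A_\xi(\infty)$, hence is bounded between positive constants on $[a,\infty)$ for $a$ large enough; thus $y/A_\xi(y)\asymp y$ there and \eqref{eq_Axi} reduces to $\int_{(a,\infty)}y\,|\mathrm{d}\Phi_\lambda(y)|<\infty$. Integrating by parts (the boundary term $y\Phi_\lambda(y)$ vanishing at $+\infty$ whenever either side is finite), this is equivalent to $\int_a^\infty\Phi_\lambda(y)\,\mathrm{d}y<\infty$, and by Fubini and the substitution $y\mapsto e^{y}$,
\[
\int_a^\infty\Phi_\lambda(y)\,\mathrm{d}y=\int_0^\infty\bar\mu(w)\,\texttt{E}_1(\lambda e^{a}w)\,\mathrm{d}w,
\]
with $\texttt{E}_1$ as in \eqref{eq_E1def}. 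Since for any $c>0$ the function $\texttt{E}_1(cw)$ decays faster than any power as $w\to\infty$ and behaves like $\ln(1/w)$ as $w\to0$ (the constant $c$ affecting only lower-order terms), finiteness of $\int_0^\infty\texttt{E}_1(cw)\bar\mu(w)\,\mathrm{d}w$ does not depend on $c$; hence \eqref{eq_Axi} holds for the given $\lambda$ --- equivalently, for all $\lambda$ --- if and only if \eqref{eq_E1} holds.
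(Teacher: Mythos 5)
Your proof is correct and follows essentially the same route as the paper: reduce to the a.s.\ finiteness of $\lim_{t\to\infty}v_t(0,0,\xi)$, rewrite $|\psi_0|$ via $\bar\mu$ to bound the backward equation \eqref{eq_BDEv} in terms of $\Phi_\lambda(-\xi_s)$, invoke the Erickson--Maller perpetual-integral criterion to translate \eqref{eq_Axi} into $\int_0^\infty\Phi_{\lambda}(-\xi_s)\,\mathrm{d}s<\infty$ a.s., and obtain \eqref{eq_E1} by the same integration-by-parts/Fubini computation. The only (harmless) variation is how you close the bound: the paper integrates $\tfrac{\partial}{\partial s}\log v_t\ge -\Phi_\lambda(-\xi_s)$ to get $v_t(0,0,\xi)\le\lambda\exp\{\int_0^t\Phi_\lambda(-\xi_s)\mathrm{d}s\}$ directly, whereas you derive $\int_0^t\Phi_{\bar v_t(0,\xi)}(-\xi_r)\mathrm{d}r\ge1$ and conclude by contradiction with a level $V$ for which $h(V)<1$.
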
 Note that, for $y> 0$, the following inequality for the exponential integral holds
 	$$ \normalfont{\texttt{E}_1}(\lambda y) \leq  e^{-\lambda y}\log\left(1+\frac{1}{\lambda y}\right).$$
 %	\nat{Moreover, using the inequality $\log(x)\le x-1$ which holds for all $x>0$, we deduce that $ \normalfont{\texttt{E}_1}(\lambda y)\le e^{-\lambda y}(\lambda y)^{-1}$.}	
 Therefore, in the case  $\mathbb{E}^{(e)}\big[\xi_1\big]\in(-\infty, 0)$,  a simpler condition than \eqref{eq_E1} is the following 
	$$ \int_{0}^{\infty} e^{-\lambda y} \log\left(1+\frac{1}{\lambda y}\right)\bar{\mu}(y)\mathrm{d} y<\infty.$$
We also observe that since the mapping $\lambda\mapsto \Phi_\lambda(u)$ is decreasing, we have that if \eqref{eq_Axi} holds for some $\lambda>0$ then it holds for any $\tilde{\lambda}>\lambda$. Moreover, note that
$\log(1 + 1/y\lambda) = \log(1/y\lambda) + \log(y\lambda + 1)$ and since the problem is at zero only $\log(1/y\lambda)$ matters for the finiteness of the above integral.
%$$ \int_{0}^{\infty} e^{-\lambda y} \nat{y^{-1} }\bar{\mu}(y)\mathrm{d} y<\infty.$$

 	Our next main result deals with  the \textit{critical explosive} regime.  More precisely,  we assume that $\xi$ satisfies the so-called \textit{Spitzer's condition}, i.e. 
 	\begin{equation}\label{eq_spitzer}\tag{\bf{A}}
 		\frac{1}{t}\int_{0}^{t} \mathbb{P}^{(e)}(\xi_s\geq 0) \mathrm{d} s \longrightarrow \rho \in (0,1), \quad \quad  \text{as}\quad \quad  t \to \infty.
 	\end{equation}
 	 Bertoin and Doney  in \cite{bertoin1997spitzer} showed that the later condition is  equivalent to $\mathbb{P}^{(e)} (\xi_s\geq 0) \to \rho$ as $s\to \infty$.
 	Recall from \eqref{eq_kappa} that $\kappa(\theta, \lambda)$ is the  Laplace exponent of the ascending ladder process $(L^{-1},H)$. Now, according to Theorem IV.12 in  \cite{bertoin1996levy},  Spitzer's condition \eqref{eq_spitzer} is equivalent to $\theta \mapsto\kappa( \theta , 0)$  being regularly varying at zero with index $\rho \in (0,1)$, that is 
 	\begin{equation*}
 		\lim\limits_{t\downarrow 0} \frac{\kappa(c t, 0)}{\kappa(t, 0)} =c^\rho, \quad \quad \text{for all}\quad \quad   c>0.
 	\end{equation*} 
 	In addition, the function $\theta \mapsto\kappa( \theta , 0)$ may always be written in the form
 	\begin{equation}\label{eq_k}
 		\kappa(t,0) =  t^\rho\, \widetilde{\ell}(t), 
 	\end{equation}
 	where $\widetilde{\ell}$ is a slowly varying function at $0^+$, i.e  for all positive constant $c$, it holds
 	$$ \lim\limits_{x\downarrow 0}\frac{\widetilde{\ell}(c x)}{\widetilde{\ell}(x)}=1.$$ 
 	 	According to Theorems VI.14 and VI.18 in \cite{bertoin1996levy} or  Remark 3.5 in \cite{kwasnicki2013suprema}, Spitzer's condition determines the asymptotic behaviour of the probability that the L\'evy process $\xi$ remains negative. In other words, under Spitzer's condition \eqref{eq_spitzer} and for $x<0$,  we have   \begin{equation}\label{eq_lim_M}
 		\lim\limits_{t\to \infty} \frac{\sqrt{\pi}}{\kappa(1/t, 0)} \mathbb{P}_{x}^{(e)}\big(\overline{\xi}_t<0\big) =\lim\limits_{t\to \infty} \frac{\sqrt{\pi}}{\kappa(1/t, 0)} \widehat{\mathbb{P}}_{-x}^{(e)}\big(\underline{\xi}_t> 0\big)= U(-x),
 	\end{equation}
 	where we recall that $U(\cdot)$ is the renewal function for the ascending ladder-height, defined in \eqref{eq_renewalfns}.
 	
% PREVIOUS CONDITION: MEAN OF H	
% 	In order to control the effect of the environment on the event of non-explosion we need others  assumptions. The following moment condition on the descending ladder height process associated to $\xi$ is needed to guarantee the non-explosion of the process in unfavourable environments. Let us assume
% 	\begin{equation}\label{eq_Hyp_SubH}\tag{\bf{B}}
% 		\mathbb{E}^{(e)}\big[\widehat{H}_1\big] < \infty.
% 	\end{equation} 
 	
	In order to control the effect of the environment on the event of non-explosion we need other  assumptions. 
	%The following moment condition on the 
	%descending ladder height process  associated to $\xi$ 
	The following integrability condition is needed to guarantee the non-explosion of the process in unfavourable environments. Let us assume
 	 	\begin{equation}\label{eq_Hyp_SubH}\tag{\bf{B}}
 		 		\int_{0^+} zU(-\ln(z))\widehat{U}(-\ln(z)) \mu(\mathrm{d} z)< \infty.
 			\end{equation} 
 		In particular, from \eqref{grandO}, we observe that the previous condition is satisfied if
 		\[
		\int_{0^+} z\ln^2(z) \mu(\mathrm{d} z)< \infty.
		\]

 	On the other hand, we shall assume that the {\it pure branching process} of the CBLE $Z$ is lower bounded by a stable branching mechanism whose associated CBLE  explodes with positive probability.  More precisely, we assume that 
 	\begin{equation}\label{eq_Hyp}\tag{\bf{C}}
 		\mbox{there exists} \  \beta \in (-1,0) \ \mbox{and}\ C<0 \mbox{ such that } \ 
 		\psi_0(\lambda) \geq C\lambda^{1+\beta} \quad \mbox{ for all } \lambda\geq 0.
 	\end{equation} 
 	%Observe that, under condition \eqref{eq_Hyp} and using the representation of $\phi$ given in \eqref{eq_phitail}, we deduce that for all $\lambda >0$ 
 	%\begin{equation}\label{eq_cotaest}
 		%\int_{(0,\infty)} e^{-\lambda x} \bar{\mu}(x)\mathrm{d} x \geq -C \lambda^\beta,
 	%\end{equation}
 	%where we recall that $\delta=0$. Now, letting $\lambda\downarrow 0$ in the previous inequality, we see that condition \eqref{eq_Hyp} forces the L\'evy measure $\mu$ to satisfy 
 	%\begin{equation}
 	%	\int_{(0,\infty)} \bar{\mu}(x)\mathrm{d} x = \int_{(0,\infty)} x\mu(\mathrm{d} x) = \infty.
 	%\end{equation}
 	The above condition  is necessary to deal with the functional $v_t(s,\lambda,\xi)$ and to obtain an upper bound for the speed of non-explosion when the sample paths of the L\'evy process have a high running  supremum (see Proposition \ref{prop_cota0} below for details). %Recall that we are also assuming that  the {\it pure branching mechanism}  $\psi_0(\lambda)$ satisfies \eqref{nogrey} or in other words that the associated CBLE explodes with positive probability. 
	
	Roughly speaking, our aim is to show, under the above conditions, that the probability of non-explosion varies regularly  at $\infty$ with index $\rho$.  
 	\begin{theorem}[Critical-explosion regime]\label{teo_explocritica} 	 		Suppose that \eqref{nogrey} holds and that  conditions, \eqref{eq_spitzer}, \eqref{eq_Hyp_SubH} and \eqref{eq_Hyp} are also fulfilled. Then, for any $z>0$, there exists $0< \mathfrak{C}(z) <\infty $ such that
 		\begin{equation*}
 			\lim\limits_{t\to \infty} \frac{1}{\kappa(1/t,0)} \mathbb{P}_{z}(Z_t < \infty) = \mathfrak{C}(z).
 		\end{equation*}
 		%where $\ell$ is a slowly varying function at infinity.
 	\end{theorem}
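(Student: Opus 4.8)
The plan is to combine the quenched characterisation from Theorem~\ref{teo_existencia} with fluctuation theory for $\xi$, in the spirit of the absorption-rate arguments of \cite{bansaye2021extinction, cardona2021speed, cardona2023speed}. By \eqref{eq_quenchedlaw} (letting $\lambda \to \infty$ and using that $Z_t=\infty$ on the explosion event), the non-explosion probability can be written as
\[
\mathbb{P}_z(Z_t<\infty) = \mathbb{E}^{(e)}\Big[\exp\big\{-z\, v_t(0,\infty,\xi)\big\}\Big],
\]
where $v_t(0,\infty,\xi):=\lim_{\lambda\to\infty} v_t(0,\lambda,\xi)$ is the a.s.\ finite (by \eqref{nogrey}) solution of the backward equation \eqref{eq_BDEv} started from $+\infty$ at time $t$. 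The first step is to rewrite $v_t(0,\infty,\xi)$ in a way that exposes the role of the running supremum of $\xi$. Running the backward ODE \eqref{eq_BDEv} from the terminal time and using the substitution that turned $\psi$ into $\psi_0$ via the environment, one expresses $v_t(0,\infty,\xi)$ as a functional of the path $(\xi_s)_{0\le s\le t}$ that is monotone in the environment; the key analytic input is that, because of \eqref{nogrey}, the contribution of the terminal blow-up is absorbed over an arbitrarily short time interval, so $v_t(0,\infty,\xi)$ is controlled from above and below by quantities depending on $\overline{\xi}_t$ and on exponential-functional-type integrals of $\xi$.

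The second step is to produce matching two-sided bounds. For the lower bound on $\mathbb{P}_z(Z_t<\infty)$, one restricts to the favourable event $\{\overline{\xi}_t<0\}$ (equivalently, using duality, $\{\underline{\xi}_t>0\}$ for $\widehat\xi$), on which the environment keeps the effective branching rate small; on this event one shows $v_t(0,\infty,\xi)$ stays bounded, uniformly in $t$, by a random variable with finite expectation — this is exactly where hypotheses \eqref{eq_Hyp_SubH} and \eqref{eq_Hyp} enter, \eqref{eq_Hyp} giving a usable stable lower bound $\psi_0(\lambda)\ge C\lambda^{1+\beta}$ to integrate explicitly, and \eqref{eq_Hyp_SubH} controlling the jump part of $\xi$ through the renewal functions $U,\widehat U$ (cf.\ the bound \eqref{grandO}). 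Then \eqref{eq_lim_M} converts $\mathbb{P}^{(e)}_x(\overline{\xi}_t<0)$ into $\kappa(1/t,0)U(-x)/\sqrt\pi$ asymptotically, yielding $\liminf_{t\to\infty}\kappa(1/t,0)^{-1}\mathbb{P}_z(Z_t<\infty)>0$. For the upper bound one splits according to the value of $\overline{\xi}_t$: on $\{\overline{\xi}_t<0\}$ use the trivial bound $\mathbb{P}_z(Z_t<\infty)\le \mathbb{P}^{(e)}(\overline{\xi}_t<0)$ again via \eqref{eq_lim_M}; on $\{\overline{\xi}_t\ge 0\}$ one needs Proposition~\ref{prop_cota0} (the upper bound on the speed of non-explosion when the sample path has a high running supremum, which rests on \eqref{eq_Hyp}) to show this part contributes a $o(\kappa(1/t,0))$ term, using Spitzer's condition \eqref{eq_spitzer} and the regular variation \eqref{eq_k} of $\kappa(\cdot,0)$ at $0$.

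The third step is to pass from $\liminf$/$\limsup$ estimates to an exact limit. Here the natural route is a renewal/Tauberian argument: one shows that $\kappa(1/t,0)^{-1}\mathbb{P}_z(Z_t<\infty)$ is (asymptotically) monotone or, better, that $t\mapsto \mathbb{P}_z(Z_t<\infty)$ satisfies an approximate multiplicative/renewal relation — for instance by conditioning at an intermediate time $r$ and using the quenched branching property together with the Markov property of $\xi$ — so that regular variation of index $\rho$ is forced and the constant $\mathfrak{C}(z)$ is identified as the limit. An alternative, cleaner in the critical regime, is to invoke the known asymptotics of the exponential functional $\mathtt{I}_{0,t}(\beta\xi)$ under Spitzer's condition (as already used in the stable case giving \eqref{eq_noexplosion}, see \cite{palau2016asymptotic}) to pin down the limit of the dominating term, and then show the error between $v_t(0,\infty,\xi)$ and its stable proxy is negligible in $L^1$ thanks to \eqref{eq_Hyp_SubH}. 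Finiteness and strict positivity of $\mathfrak{C}(z)$ then follow from the two-sided bounds of the second step, together with dominated convergence using the integrable envelope built there.

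I expect the main obstacle to be the second step on the event $\{\overline{\xi}_t\ge 0\}$: controlling $v_t(0,\infty,\xi)$ — equivalently the non-explosion probability — when the environment has a high running supremum, since there the effective branching rate is amplified and the backward ODE \eqref{eq_BDEv} is genuinely nonlinear with a possibly super-linear $\psi_0$. This is precisely why \eqref{eq_Hyp} is imposed: it allows one to dominate $\psi_0$ below by the explicit stable mechanism $C\lambda^{1+\beta}$ and hence to integrate \eqref{eq_BDEv} in closed form to get the comparison with the stable CBLE and the exponential functional $\mathtt{I}_{0,t}(\beta\xi)$, whose large-time behaviour under \eqref{eq_spitzer} is available from the literature. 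Making the error terms from the jumps of $\xi$ uniformly small — which forces the double renewal-function weight in \eqref{eq_Hyp_SubH} — will require careful estimation, but is of the same flavour as the arguments in \cite{cardona2021speed, cardona2023speed}.
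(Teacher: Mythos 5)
Your overall decomposition is the right one --- splitting $\{Z_t<\infty\}$ according to whether the running supremum of $\xi$ is high or low, using \eqref{eq_Hyp} to dominate the quenched non-explosion probability by the stable one and hence by a functional of $\texttt{I}_{0,t}(\beta\xi)$ on the high-supremum event, and using \eqref{eq_Hyp_SubH} together with \eqref{eq_lim_M} on the low-supremum event. This is exactly the skeleton of the paper's Propositions \ref{prop_cotanocero} and \ref{prop_cota0}. However, your setup contains an error: the non-explosion probability is obtained from \eqref{eq_quenchedlaw} by letting $\lambda\downarrow 0$, not $\lambda\to\infty$ (the latter gives the \emph{extinction} probability $\mathbb{P}(Z_t=0\,|\,\xi)$, which here is identically $0$ by \eqref{survprob}). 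The relevant quantity is $v_t(0,0,\xi-x):=\lim_{\lambda\downarrow 0}v_t(0,\lambda e^{-x},\xi-x)$, and condition \eqref{nogrey} is an integrability condition on $1/|\psi_0|$ \emph{at the origin} governing whether this limit is strictly positive; it says nothing about a ``solution started from $+\infty$ at time $t$''.

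The genuine gap is your third step. Neither of the two routes you sketch (an approximate renewal/Tauberian relation, or an $L^1$ comparison of $v_t$ with its stable proxy) is carried out, and it is not clear either would close: your two-sided bounds only show that $\liminf$ and $\limsup$ of $\kappa(1/t,0)^{-1}\mathbb{P}_z(Z_t<\infty)$ are comparable, not equal. The paper obtains an exact limit by introducing the CSBP in a L\'evy environment \emph{conditioned to stay negative}, i.e.\ the Doob $h$-transform \eqref{eq_CDBPnegativo} built from the martingale $U(-\xi_t)\mathbf{1}_{\{\overline{\xi}_t<0\}}$. Lemma \ref{prop_maxR} then upgrades the bound on $\{\overline{\xi}_t<0\}$ to a true limit $c(z,x)U(-x)$ (Proposition \ref{prop_cotanocero}), with $c(z,x)>0$ coming from Lemma \ref{lem_proba_ext}, which is where \eqref{eq_Hyp_SubH} actually enters --- via a path-integral estimate $\mathbb{E}_x^{(e),\downarrow}[\int_0^\infty\Phi_\lambda(-\xi_s)\,\mathrm{d} s]<\infty$ proved with Theorem VI.20 of Bertoin, not via a direct control of $v_t$ on the favourable event as you suggest. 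Finally, to remove the threshold $y$ one needs the monotonicity and boundedness of $x\mapsto c(z,x)U(-x)$ (Lemma \ref{lem:propcU}), so that $c(z,y')U(-y')$ converges as $y'\to-\infty$ and the upper and lower bounds merge when $\varsigma\to 0$ and the Potter constant $A\to 1$. Without the conditioned-environment limit theorem and this monotonicity argument, the constant $\mathfrak{C}(z)$ is not identified and the existence of the limit is not established.
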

 The previous result provides evidence that the asymptotic behaviour  of the non-explosion probability is deeply related to the fluctuations of the L\'evy environment $\xi$.   
	
	%Further, taking into account \eqref{eq_k}, we observe from  Theorem \ref{teo_explocritica} that, for any $z>0$, there exists $0< \mathfrak{C}_2(z) <\infty$ such that
 	%\begin{equation*}
 		%\lim\limits_{t\to \infty} \frac{t^\rho}{\ell(t)} \mathbb{P}_{z}(Z_t < \infty) = \mathfrak{C}_2(z),
 	%\end{equation*}
 	%where $\ell$ is a slowly varying function at $\infty$ defined as
 %$\begin{equation}\label{eq_ellinfty}
 		%\ell(t)=\ell_1(1/t),
 	%\end{equation}
 	%and we recall that $\ell_1$ is the slowly varying function at $0^+$ given in \eqref{eq_k}.
 	
% 	\nat{
% \begin{remark}
%It seems reasonable to think, as has been proven in the stable case, that in the supercritical-explosion regime under certain technical conditions on  $\xi$, it holds that for any $z>0$ there exists $0< \mathfrak{C}_2(z)<\infty$
%\begin{equation*}
%	\lim\limits_{t\to \infty} t^{3/2}e^{-t\Phi_{\xi}(\gamma)} \mathbb{P}_{z}(Z_t < \infty) = \mathfrak{C}_2(z).
%\end{equation*}
%However, we believe that this problem requires different techniques to those developed here.
% \end{remark}	
%}

\hspace{1cm}
 
The remainder of this paper is devoted to the proofs of the main results. In Section \ref{sec_explosionconserv}, we present the proofs of Theorem \ref{teo_existencia} and Proposition \ref{lem_condsufnec}. In Section \ref{sub_explosionsub}, the proof of Theorem \ref{teo_explosubcritica} is given. In Section \ref{sec_explosionconditioned}, we introduce  continuous-state branching processes in a conditioned L\'evy environment. This conditioned version is required to study the long-term behaviour of the non-explosion probability in the critical regime. Section  \ref{sec_explosiontheoremrates} is devoted to the long-term behaviour results for the critical regime.

 		\section{Proofs of Theorem \ref{teo_existencia} and Proposition \ref{lem_condsufnec}}\label{sec_explosionconserv}
 		
 		We first deal with the proof of Theorem \ref{teo_existencia} which relies on the extension of the classical Carath\'eodory's theorem for ordinary differential equations, that we state here for completeness. For its proof  the reader is referred to Theorems 1.1,  2.1 and 2.3 in Person \cite{persson1975generalization}.
 		
 		\begin{theorem}[Extended Carath\'eodory's  existence theorem]\label{teo_caratheo}
 			Let $I=[-b,b]$  with $b>0$.  Assume that the function $f: I\times \mathbb{R} \to \mathbb{R}$ satisfies the following  conditions: 
 			%Fix a point  $(t,\lambda)$ where $t,\lambda \geq 0$.
 			\begin{enumerate}
 				\item[i)] the mapping $s\mapsto f(s,\theta)$ is measurable for each fixed $\theta \in \mathbb{R}$,
 				\item[ii)] the mapping $\theta \mapsto f(s,\theta)$ is continuous for each fixed $s \in I$,
 				\item[iii)] there exists a Lebesgue-integrable function $m$ on the interval $I$ such that $$|f(s,\theta)| \leq m(s) \big( 1+|\theta|\big) , \quad (s,\theta)\in I\times \mathbb{R}.$$ 
 			\end{enumerate}
 			Then there exists an absolutely continuous function $u(x)$ such that 
 			\begin{equation}\label{eq_edo}
 				u(x) =\int_{0}^{x} f(y,u(y))\mathrm{d} y, \quad \quad  x\in I. 
 			\end{equation}
 			%	and with terminal condition $x(t)=\lambda$.
 		\end{theorem}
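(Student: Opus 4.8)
The plan is to carry out the classical Tonelli--Carath\'eodory polygon construction with a delayed argument. Since $I=[-b,b]$ is symmetric about $0$ and one expects (and only needs) a solution with $u(0)=0$, it suffices to build a solution on $[0,b]$ and, by the mirror-image scheme, on $[-b,0]$; the two pieces both vanish at $0$ and glue into an absolutely continuous $u$ on $I$ solving \eqref{eq_edo}. So I would focus on $[0,b]$, and (WLOG $m\ge 0$) set $M_0:=\int_{-b}^{b}m(s)\,\mathrm{d} s<\infty$.

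For each $n\ge 1$ I would define $u_n$ on $[-1/n,b]$ by $u_n\equiv 0$ on $[-1/n,0]$ and, recursively over the intervals $[k/n,(k+1)/n]$,
\[
u_n(x)=\int_0^x f\big(y,u_n(y-1/n)\big)\,\mathrm{d} y .
\]
On each such interval the inner function $y\mapsto u_n(y-1/n)$ has already been constructed and is continuous, so by conditions (i)--(ii) the superposition $y\mapsto f(y,u_n(y-1/n))$ is measurable (this superposition-measurability, proved by uniformly approximating the continuous inner function by step functions, is the technical subtlety in setting up the scheme); since $u_n$ is bounded on the compact piece already built, condition (iii) makes this integrand integrable, so the recursion extends $u_n$ as an absolutely continuous function and, after finitely many steps, defines $u_n$ on all of $[0,b]$. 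Once $u_n$ is defined on $[0,b]$, condition (iii) together with $|u_n(y-1/n)|\le\sup_{[0,y]}|u_n|$ gives $\sup_{[0,x]}|u_n|\le M_0+\int_0^x m(y)\sup_{[0,y]}|u_n|\,\mathrm{d} y$, and Gronwall's inequality yields the $n$-uniform bound $\|u_n\|_{\infty,[0,b]}\le M_0e^{M_0}=:M$. Feeding this back into (iii), for $0\le x_1\le x_2\le b$ one gets $|u_n(x_2)-u_n(x_1)|\le(1+M)\int_{x_1}^{x_2}m(y)\,\mathrm{d} y=\Phi(x_2)-\Phi(x_1)$ with $\Phi(x):=(1+M)\int_0^x m(y)\,\mathrm{d} y$ absolutely continuous, hence uniformly continuous, on $[0,b]$; thus $\{u_n\}$ is uniformly bounded and equicontinuous, and Arzel\`a--Ascoli provides a subsequence $u_{n_k}\to u$ uniformly on $[0,b]$.

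It then remains to pass to the limit in the recursion. From $|u_n(x_2)-u_n(x_1)|\le\Phi(x_2)-\Phi(x_1)$ and the uniform continuity of $\Phi$ one obtains $u_{n_k}(\cdot-1/n_k)\to u$ uniformly on $[0,b]$ (for arguments in $[0,1/n_k)$ use $u_{n_k}=0$ there together with $u(0)=0$). By continuity of $\theta\mapsto f(y,\theta)$, $f(y,u_{n_k}(y-1/n_k))\to f(y,u(y))$ for every $y$, and all these functions are dominated by $(1+M)m(y)\in L^1([0,b])$; dominated convergence then gives $u(x)=\lim_k\int_0^x f(y,u_{n_k}(y-1/n_k))\,\mathrm{d} y=\int_0^x f(y,u(y))\,\mathrm{d} y$ for every $x\in[0,b]$, and $u$ is absolutely continuous as an indefinite integral of an $L^1$ function. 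Repeating the construction on $[-b,0]$ (now shifting the argument forward by $1/n$) and gluing at $0$ finishes the proof. The main obstacle is this limit passage: because $f$ is only measurable --- not continuous --- in the first variable, one cannot invoke equicontinuity of $f$ itself and must instead exploit pointwise-in-$\theta$ continuity together with the $L^1$-domination furnished by (iii). An alternative route would replace the polygon/Arzel\`a--Ascoli argument by Schauder's fixed point theorem applied to the operator $Tu(x)=\int_0^x f(y,u(y))\,\mathrm{d} y$ on the compact convex set cut out by the bound $|u(x_2)-u(x_1)|\le\Phi(x_2)-\Phi(x_1)$ and $u(0)=0$, but it meets the same technical points.
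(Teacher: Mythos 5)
The paper does not prove this statement at all: it is quoted verbatim as a known result and the reader is referred to Theorems 1.1, 2.1 and 2.3 of Persson, \emph{A generalization of Carath\'eodory's existence theorem for ordinary differential equations} (1975). Your proposal therefore supplies a proof where the paper only supplies a citation, and the proof you give is correct and complete: the delayed-argument (Tonelli) scheme $u_n(x)=\int_0^x f(y,u_n(y-1/n))\,\mathrm{d} y$ with $u_n\equiv 0$ on $[-1/n,0]$ is well defined by the finite recursion over the intervals $[k/n,(k+1)/n]$, the superposition measurability for Carath\'eodory functions is correctly flagged and handled, the linear-growth hypothesis (iii) combined with Gronwall's inequality (valid for an $L^1$ kernel) yields the $n$-uniform bound and the equicontinuity estimate $|u_n(x_2)-u_n(x_1)|\le\Phi(x_2)-\Phi(x_1)$, and the passage to the limit via Arzel\`a--Ascoli, pointwise continuity in $\theta$, and dominated convergence is exactly what is needed to compensate for the lack of continuity of $f$ in its first argument. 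The reduction of $[-b,0]$ to $[0,b]$ by the reflection $g(s,\theta):=-f(-s,\theta)$, which satisfies the same hypotheses, and the gluing at $0$ (both pieces vanish there) are also sound. This is essentially the classical route to Persson's generalization of Carath\'eodory's theorem: the only difference from the textbook local statement is that the bound $m(s)(1+|\theta|)$ replaces a $\theta$-independent majorant, which is precisely what your Gronwall step exploits to obtain existence on the whole of $I$ rather than on a small interval; the Schauder fixed-point alternative you mention would work equally well on the compact convex set cut out by the modulus $\Phi$.
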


 	On the other hand, by integration by parts, we note that  the function $|\psi_0|=-\psi_0$ can also be rewritten as follows 	\begin{equation}\label{eq_phitail}
 		|\psi_0(\lambda)|= \lambda\int_{0}^\infty e^{-\lambda x} \bar{\mu}(x)\mathrm{d} x,
 	\end{equation}
 	where we recall that $\bar{\mu}(x)= \mu(x,\infty)$. The previous expression will be useful for what follows.

 		\begin{proof} [Proof of Theorem \ref{teo_existencia}] 
The first part of the proof follows from similar arguments as those used in \cite{bansaye2013extinction} and \cite{palau2018branching} in the case of finite mean (i.e., when $|\psi_0^\prime(0+)|<\infty$) whenever  there is an a.s.  solution of the backward differential equation \eqref{eq_BDEv}. We present its proof for the sake of completeness.

We first deduce an explicit expression for the reweighted process $Z_te^{\xi_t}$, for $t\ge 0$. In order to do so, we consider the function $f(x,y)= xe^{-y}$ and apply It\^o's formula (see e.g. Theorem II.5.1 in \cite{ikeda2014stochastic}). Then observing that $f_x'(x,y) = e^{-y}, f_y'(x,y)= -xe^{-y}, f_{xy}''(x,y)=f_{yx}''(x,y)=-e^{-y}, f_{xx}''(x,y)=0, f_{yy}''(x,y)= xe^{-y}$ and 
 applying directly Theorem II.5.1 in \cite{ikeda2014stochastic}, we see
\[
\begin{split}
	Z_te^{-\xi_t} =& Z_0 e^{-\xi_0} +\sigma\int_0^t f_x'(Z_s, \xi_s) Z_s\mathrm{d} B^{(e)}_s +\sigma\int_0^t f_y'(Z_s, \xi_s) \mathrm{d} B^{(e)}_s\\
	&+(\alpha+\delta)\int_0^t f_x'(Z_s, \xi_s) Z_s\mathrm{d} s+ \overline{\alpha}\int_0^t f_y'(Z_s, \xi_s)\mathrm{d} s+\frac{\sigma^2}{2}\int_0^t f_{xx}^{''}(Z_s, \xi_s) Z_s^2\mathrm{d} s \\
	&+\frac{\sigma^2}{2}\int_0^t f_{yy}^{''}(Z_s, \xi_s) \mathrm{d} s+\frac{\sigma^2}{2}\int_0^t f_{xy}^{''}(Z_s, \xi_s) Z_s\mathrm{d} s+
	\frac{\sigma^2}{2}\int_0^t f_{yx}^{''}(Z_s, \xi_s) Z_s\mathrm{d} s\\
	&+\int_0^t\int_{[0,\infty)}\int_0^{Z_{s-}}\left(f(Z_{s-}+z, \xi_{s-})-f(Z_{s-}, \xi_{s-})\right) N^{(b)}(\mathrm{d} s, \mathrm{d} z, \mathrm{d} u)\\
	&+\int_0^t\int_{(-1,1)^c}\left(f(Z_{s-}+Z_{s-}(e^x-1), \xi_{s-}+x)-f(Z_{s-}, \xi_{s-})\right) N^{(e)}(\mathrm{d} s, \mathrm{d} x)\\
	&+\int_0^t\int_{(-1,1)}\left(f(Z_{s-}+Z_{s-}(e^x-1), \xi_{s-}+x)-f(Z_{s-}, \xi_{s-})\right) \widetilde{N}^{(e)}(\mathrm{d} s, \mathrm{d} x)\\
	&+\int_0^t\int_{(-1,1)}\left(f(Z_{s}+Z_{s}(e^x-1), \xi_{s}+x)-f(Z_{s}, \xi_{s})\right.\\
	&\hspace{6cm}\left.-Z_s(e^x-1)f^{'}_x(Z_s, \xi_{s})-xf^{'}_y(Z_s, \xi_{s})\right)\pi(\mathrm{d} x)  \mathrm{d} s.
\end{split}
\]
By replacing all the factors in the above identity and recalling the definition of $\overline{\alpha}$ below identity \eqref{eq_envir2}, we obtain  
	\begin{equation}\label{eq:zexi}
Z_te^{-\xi_t} = Z_0e^{-\xi_0} + \int_{0}^{t} \int_{[0, \infty)} \int_{0}^{Z_{s-}} ze^{-\xi_{s-}}N^{(b)}(\mathrm{d}s, \mathrm{d}z, \mathrm{d} u).
	\end{equation}
	Next, we fix $\lambda\ge 0$ and $t\ge s\ge 0$.  We assume for now that $(v_t(s,\lambda , \xi), s\in [0,t])$ is  solution of the backward differential equation \eqref{eq_BDEv}. 
For simplicity on exposition, we denote by $H_t(s)= \exp\{-Z_se^{-\xi_s}v_t(s,\lambda, \xi)\}$ and we apply  again It\^o's formula  to the processes $(Z_se^{-\xi_s}, s\le t)$ and $(v_t(s,\lambda, \xi), s\le t)$ with the function $f(x,y)=e^{-xy}$,
%$(H_t(s), s\le t)$, 
that is
\begin{align*}
	H_t(t) & = H_t(s) + \int_{s}^{t}f_y'(Z_{r}e^{\xi_{r}}, v_t(r, \lambda,\xi)) \mathrm{d}(v_t(r, \lambda,\xi)) \\ & +  \int_{s}^{t} \int_{[0,\infty)} \int_{0}^{Z_{s-}} \left(f\Big((Z_{r-}+z)e^{-\xi_{r-}}, v_t(r,\lambda,\xi)\Big)
	\right.\\
	&\hspace{7cm}- f(Z_{r-}e^{\xi_{r-}}, v_t(r, \lambda,\xi)) \Big) N^{(b)}(\mathrm{d}r, \mathrm{d}z, \mathrm{d} u).
\end{align*}
Now  using  \eqref{eq_BDEv}, we get 
\begin{equation*}
	\begin{split}
		H_t(t) &= H_t(s) - \int_{s}^{t} H_t(r) Z_r e^{-\xi_r} e^{\xi_r} \psi_0\big(v_t(r,\lambda ,\xi)e^{-\xi_r}\big) \mathrm{d} r  \\ 
		&\, \hspace{3cm}+ \int_{s}^{t} \int_{[0,\infty)} \int_{0}^{Z_{r-}} H_t(r-) \Big(e^{-ze^{-\xi_{r-}}v_t(r,\lambda, \xi)}-1 	\Big) N^{(b)}(\mathrm{d}r, \mathrm{d}z, \mathrm{d} u) \\ 
		&=  H_t(s) - \int_{s}^{t} \int_{[0,\infty)}H_t(r) Z_r \Big(e^{-ze^{-\xi_{r}}v_t(r,\lambda, \xi)}-1 	\Big) \mu(\mathrm{d} z)\mathrm{d} r \\ 
		&\, \hspace{3cm}+ \int_{s}^{t} \int_{[0,\infty)} \int_{0}^{Z_{r-}} H_t(r-) \Big(e^{-ze^{-\xi_{r-}}v_t(r,\lambda, \xi)}-1 	\Big) N^{(b)}(\mathrm{d}r, \mathrm{d}z, \mathrm{d} u) \\
		 & = H_t(s) - \int_{s}^{t} \int_{[0,\infty)} \int_{0}^{Z_{s-}} H_t(r-)  \Big(1-e^{-ze^{-\xi_{r-}}v_t(r,\lambda, \xi)}	\Big)  \widetilde{N}^{(b)}(\mathrm{d}s, \mathrm{d} z, \mathrm{d} r),
	  \end{split}
\end{equation*} 
where $\widetilde{N}^{(b)}(\mathrm{d} s, \mathrm{d} z, \mathrm{d}r)$ denotes the compensated version of $N^{(b)}(\mathrm{d} s, \mathrm{d} z, \mathrm{d}r)$. By taking conditional  expectations in both sides we get  \[\mathbb{E}_{(z,x)}\left[H_t(t) \Big | \xi, \mathcal{F}^{(b)}_s\right] = H_t(s),\]
as expected.
 			 
 			 In order to show the existence of equation \eqref{eq_BDEv}, we will appeal to the extended version of  Carath\'eodory's existence Theorem \ref{teo_caratheo}. Fix $\omega \in \Omega^{(e)}$ and $t, \lambda\geq 0$.  Denote by $f: [0,t] \times \mathbb{R} \to [-\infty,0]$ the following function 
 			$$ f(s,\theta)= e^{\xi_s(\omega)}\psi_0\big(\theta e^{-\xi_s(\omega)}\big).
			%=  -e^{\xi_s(\omega)}\phi\big( \theta e^{-\xi_s(\omega)}\big).
			$$ 
 			In the following we omit the notation $\omega$ for the sake of brevity. First, we observe that the mapping $s\mapsto f(s,\theta)$ is measurable for each fixed $ \theta \in \mathbb{R}$. Indeed, the latter follows from the fact that $\xi=(\xi_s,\ s \geq 0)$ possesses c\`adl\`ag paths and is $(\mathcal{F}^{(e)}_t)_{t\geq 0}$-adapted. More precisely, the application $(s,\omega)\mapsto \xi_s$ is $\mathcal{B}([0,t])\otimes \mathcal{F}^{(e)}_t$-measurable implying  that the mapping $s\mapsto f(s, \theta)$ is $\mathcal{B}([0,t])$-measurable for each fixed $\theta \in \mathbb{R}$ and $\omega\in \Omega^{(e)}$. Furthermore,  we have that the function $\theta \mapsto f(s,\theta)$ is continuous for each fixed $s\in [0,t]$ since $\psi_0$ is continuous. Therefore, according to Theorem \ref{teo_caratheo}, the proof is completed once we have shown that there exists an integrable function $m$ on $[0,t]$, such that, for any $(s,\theta) \in [0,t]\times \mathbb{R}$
 			\begin{equation*}
 				|f(s,\theta)|\leq m(s)\big(1+ |\theta|\big).
 			\end{equation*}
			%\textcolor{red}	{Without loss of generality, we  will now assume   that  $\delta =0$. Otherwise, we consider the branching mechanism $\psi(\lambda)=-\phi(\lambda)+\delta \lambda$
% 	and  modify the L\'evy process $(\xi_t, t\ge 0)$  introduced in \eqref{eq_envir2} by substracting $\delta$ to the drift term, in other words,  we will then consider $(\xi_t-\delta t, t\ge 0)$. }
Note that, for any $(s,\theta) \in [0,t]\times \mathbb{R}$, we have
 			\begin{eqnarray*}
 				|f(s,\theta)| &=& \big|e^{\xi_s}\psi_0\big(\theta e^{-\xi_s}\big)\big| = e^{\xi_s}|\psi_0\big( \theta e^{-\xi_s}\big)|\mathbf{1}_{\{\xi_s\geq 0\}} + e^{\xi_s}|\psi_0\big(\theta e^{-\xi_s}\big)|\mathbf{1}_{\{\xi_s < 0\}}\\ &\leq & e^{\xi_s}|\psi_0(\theta)|\mathbf{1}_{\{\xi_s\geq 0\}} +e^{\xi_s}|\psi_0\big(\theta e^{-\xi_s}\big)|\mathbf{1}_{\{\xi_s < 0\}},
 			\end{eqnarray*}
 			where in the last inequality, we have used that $|\psi_0|$ is an increasing function. Now, since $|\psi_0|$ is a concave function, it is well-known that for any $\theta>0$ and $k>1$, we have $|\psi_0(\theta)|\leq k|\psi_0(\theta/k)|$ (see for instance the proof of  \cite[Proposition III. 1]{bertoin1996levy}). In particular, this inequality implies,
 			\[
 				|f(s,\theta)| \leq  e^{\xi_s}|\psi_0(\theta)|\mathbf{1}_{\{\xi_s\geq 0\}} +|\psi_0(\theta)|\mathbf{1}_{\{\xi_s < 0\}}  \leq \max\{e^{\xi_s}\mathbf{1}_{\{\xi_s\geq 0\}}, \mathbf{1}_{\{\xi_s < 0\}}\}|\psi_0(\theta)|. 
 			\]
 			On the other hand, since $|\psi_0|$ is a Bernstein function, it is well known that  there exists $\mathtt{c}, d>0$, such that $|\psi_0(\theta)|\leq  \mathtt{c}+d\theta$ for any $\theta \ge 0$ (see for instance, Corollary 3.8 in \cite{ScSVon}). It turns out that, 
 			\begin{eqnarray*}
 				|f(s,\theta)| &\leq & m(s) \big(1 + |\theta|\big) ,\quad \quad \text{for all} \quad  \quad (s,\theta) \in [0,t]\times \mathbb{R},
 			\end{eqnarray*}
 			where
 			$$m(s):=(\mathtt{c}\vee d)\max\{e^{\xi_s}\mathbf{1}_{\{\xi_s\geq 0\}}, \mathbf{1}_{\{\xi_s < 0\}}\},  \quad \quad  s \in [0,t].$$  
 			Note that $m$ is an integrable function on $[0,t]$ since the L\'evy process $\xi$ has c\`adl\`ag paths. 
 			Finally, thanks  to Theorem \ref{teo_caratheo}, there exists  an a.s. solution of \eqref{eq_BDEv}.  		\end{proof}

 		The functional $v_t(s,\lambda,\xi)$ has useful monotonicity properties as it is stated in the following lemma. In the forthcoming sections, we will make use of these properties.

 		\begin{lemma}\label{prop_monotonia}
 			For any $\lambda\geq 0$ and $t\geq 0$, the mapping $s \mapsto v_t(s,\lambda, \xi)$ is decreasing on $[0,t]$. For any $s\in [0,t]$, the mapping $\lambda \mapsto v_t(s,\lambda,\xi)$ is increasing on $[0,\infty)$.
 		\end{lemma}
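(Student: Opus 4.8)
The plan is to deduce both monotonicity statements from the backward equation \eqref{eq_BDEv} together with the probabilistic identity of Theorem \ref{teo_existencia}. Fix $t\ge 0$, $\lambda\ge 0$ and a realisation of $\xi$, and abbreviate $v(s):=v_t(s,\lambda,\xi)$; recall from Theorem \ref{teo_caratheo} that $s\mapsto v(s)$ is absolutely continuous, hence continuous, on $[0,t]$.

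The first step I would take is to show that $v(s)\ge 0$ for every $s\in[0,t]$. From \eqref{eq:zexi}, under $\mathbb{P}_{(z,x)}$ the reweighted process satisfies $Z_se^{-\xi_s}\ge Z_0e^{-\xi_0}=ze^{-x}>0$ for all $s\ge 0$, because the remaining term in \eqref{eq:zexi} is a non-negative stochastic integral; in particular $Z$ never vanishes. On the other hand, Theorem \ref{teo_existencia} gives $\exp\{-Z_se^{-\xi_s}v(s)\}=\mathbb{E}_{(z,x)}\big[\exp\{-\lambda Z_te^{-\xi_t}\}\mid \xi,\mathcal{F}^{(b)}_s\big]\in[0,1]$ a.s., so $Z_se^{-\xi_s}v(s)\ge 0$ a.s., which forces $v(s)\ge 0$ a.s. for each fixed $s$, and then, by continuity of $v$, a.s. $v(s)\ge 0$ for every $s\in[0,t]$.

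Granted positivity, the two monotonicity claims follow quickly. For the dependence on $s$, note that $\psi_0(\theta)=\psi(\theta)+\delta\theta=-\int_{(0,\infty)}(1-e^{-\theta x})\mu(\mathrm{d}x)\le 0$ for all $\theta\ge 0$, so $\psi_0(v(s)e^{-\xi_s})\le 0$ and hence $\partial_s v(s)=e^{\xi_s}\psi_0(v(s)e^{-\xi_s})\le 0$ for a.e.\ $s\in[0,t]$ by \eqref{eq_BDEv}; absolute continuity of $v$ then makes $s\mapsto v(s)$ non-increasing. For the dependence on $\lambda$, take $0\le\lambda_1\le\lambda_2$: the pointwise inequality $\exp\{-\lambda_1 Z_te^{-\xi_t}\}\ge\exp\{-\lambda_2 Z_te^{-\xi_t}\}$ is preserved by $\mathbb{E}_{(z,x)}[\,\cdot\mid\xi,\mathcal{F}^{(b)}_s]$, so Theorem \ref{teo_existencia} yields $\exp\{-Z_se^{-\xi_s}v_t(s,\lambda_1,\xi)\}\ge\exp\{-Z_se^{-\xi_s}v_t(s,\lambda_2,\xi)\}$ a.s., and dividing by $Z_se^{-\xi_s}>0$ gives $v_t(s,\lambda_1,\xi)\le v_t(s,\lambda_2,\xi)$. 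If strict monotonicity in $\lambda$ is wanted, I would instead invoke a backward comparison principle for \eqref{eq_BDEv}: if the two solutions agreed at some $s_0<t$, then $d:=v_t(\cdot,\lambda_2,\xi)-v_t(\cdot,\lambda_1,\xi)$ would satisfy $d(s_0)=0$, $d(t)=\lambda_2-\lambda_1>0$ and, since $\psi_0$ is non-increasing on $[0,\infty)$ and $v_t(s,\lambda_2,\xi)\ge v_t(s,\lambda_1,\xi)\ge 0$ on $[s_0,t]$, also $d'(s)\le 0$ there, which contradicts $d$ climbing from $0$ to a positive value.

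The hard part is the positivity step: one has to exclude $v$ dipping below zero in the interior of $[0,t]$, and I would handle this via the representation of Theorem \ref{teo_existencia} rather than from \eqref{eq_BDEv} directly, since $\psi_0$ need not be finite at negative arguments and the ODE is then ill behaved there. The rest is routine bookkeeping: passing from ``for each fixed $s$'' to ``for all $s$'' uses continuity of the absolutely continuous map $s\mapsto v_t(s,\lambda,\xi)$, and, should one want the monotonicity in $\lambda$ to hold almost surely simultaneously over all $\lambda$, a density argument combined with continuity of $\lambda\mapsto v_t(s,\lambda,\xi)$ (which itself follows from dominated convergence in the identity of Theorem \ref{teo_existencia}).
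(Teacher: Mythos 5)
Your proposal is correct and follows essentially the same route as the paper: the $s$-monotonicity comes from $\psi_0\le 0$ together with the backward equation \eqref{eq_BDEv}, and the $\lambda$-monotonicity from the probabilistic representation in Theorem \ref{teo_existencia}. The only difference is that you make explicit the nonnegativity of $v_t(s,\lambda,\xi)$ (via \eqref{eq:zexi} and the conditional-expectation identity), which the paper leaves implicit; this is a sensible, if not strictly necessary, addition.
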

 		
 		\begin{proof}
 			Recall that  $\psi_0(\theta)\leq 0$. Then, from the backward differential equation \eqref{eq_BDEv}, we see that the function  $s \mapsto v_t(s,\lambda,\xi)$ is decreasing on $[0,t]$. Further, from \eqref{eq_quenchedlaw} we observe that the mapping  $\lambda \mapsto v_t(s,\lambda,\xi)$ is increasing on $[0,\infty)$ as required.
 		\end{proof}
 		
 		We conclude this section with the proof  of Proposition \ref{lem_condsufnec}. Before we do so, let us make an important remark about the non-explosion and extinction  probabilities. It is easy to deduce, by letting $\lambda\downarrow 0$ in \eqref{eq_quenchedlaw} and  with the help of  the Monotone Convergence Theorem, that the non-explosion probability  is given by
 		\begin{equation}\label{eq_exploproba}
 			\mathbb{P}_{(z,x)}\big(Z_t<\infty \ | \ \xi\big) = \exp\left\{-z\lim\limits_{\lambda \downarrow 0} v_t(0,\lambda e^{-x},\xi-x)\right\}, \quad z, t >0, \quad  x\in \mathbb{R}.
 		\end{equation} 
 		With this  in hand, we may now  observe that the process $Z$ is \textit{conservative}  if and only if 
 		\begin{equation}\label{eq_lim_v}
 			\lim\limits_{\lambda \downarrow 0} v_t(0,\lambda e^{-x},\xi-x)=0, \quad \quad  \text{for all} \quad \quad  t>0.
 		\end{equation}
 	On the other hand we also observe that, by letting $\lambda\uparrow \infty$ in \eqref{eq_quenchedlaw} and  using again the Monotone Convergence Theorem,  the  probability of extinction is such that
		\[
 			\mathbb{P}_{(z,x)}\big(Z_t=0 \ | \ \xi\big) = \exp\left\{-z\lim\limits_{\lambda \uparrow \infty} v_t(0,\lambda e^{-x},\xi-x)\right\}, \quad z, t >0, \quad  x\in \mathbb{R}.
 		\]
		Moreover, from Lemma \ref{prop_monotonia} and since $v_t(t,\lambda e^{-x},\xi-x)=\lambda e^{-x}$, we have that 
		\[
		v_t(0,\lambda e^{-x},\xi-x)\ge \lambda e^{-x}, \qquad \textrm{for }\quad \lambda, t> 0,
		\] which combined with the previous identity clearly implies that 
		\begin{equation}\label{survprob}
		\mathbb{P}_{(z,x)}\big(Z_t>0 \ | \ \xi\big) =1,\quad \quad  \text{for all} \quad \quad  z, t>0.
		\end{equation}

 		\begin{proof}[Proof of Proposition \ref{lem_condsufnec}]
 			Fix $t>0$ and recall from \eqref{eq:runsupinf} that  $(\underline{\xi}_t, t\geq 0)$ and $(\overline{\xi}_t, t\geq 0)$ denote the running infimum  and supremum of the process $\xi$, respectively. First, we assume that the pure branching mechanism $\psi_0$ satisfies \eqref{eq_Grey}, that is to say, 
 			$$ \int_{0+}\frac{1}{|\psi_0(z)|}\mathrm{d} z =  \int_{0+}\frac{1}{-\psi_0(z)}\mathrm{d} z = \infty.$$
 			From Theorem \ref{teo_existencia}, we see that the backward differential equation \eqref{eq_BDEv} can be rewritten as follows
 			\begin{equation*}
 				-t=\int_{0}^{t} \frac{\mathrm{d} v_t(s,\lambda ,\xi)}{-e^{\xi_s}\psi_0\big(e^{-\xi_s}v_t(s,\lambda,\xi)\big)}  =  \int_{0}^{t} \frac{\mathrm{d} v_t(s,\lambda,\xi)}{e^{\xi_s}|\psi_0\big(e^{-\xi_s}v_t(s,\lambda,\xi)\big)|}.
 			\end{equation*}
 			Now, we recall that $|\psi_0|$ is an increasing and non-negative function. Then appealing to the definition of the running infimum  and supremum of $\xi$,  we observe that the following inequality holds 
 			\[\begin{split}
 				-t = \int_{0}^{t} \frac{\mathrm{d}v_t(s,\lambda,\xi)}{e^{\xi_s}|\psi_0\big(e^{-\xi_s}v_t(s,\lambda,\xi)\big)|}&\leq \int_{0}^{t} \frac{\mathrm{d}v_t(s,\lambda,\xi)}{e^{\underline{\xi}_t} |\psi_0\big(e^{-\overline{\xi}_t}v_t(s,\lambda,\xi)\big)|}\\ &= \frac{e^{\overline{\xi}_t}}{e^{\underline{\xi}_t}} \int_{e^{-\overline{\xi}_t} v_t(0,\lambda, \xi)}^{e^{-\overline{\xi}_t} \lambda} \frac{1}{|\psi_0(z)|}\mathrm{d} z,
 			\end{split}	\]
 			where in the last equality we have used the change of variables  $z= e^{-\overline{\xi}_t}v_t(s,\lambda,\xi)$. 
 			Next, letting $\lambda\downarrow 0$ in the previous inequality, we get
 			\begin{equation}\label{eq_2}
 				\frac{e^{\overline{\xi}_t}}{e^{\underline{\xi}_t}} \int_{0}^{e^{-\overline{\xi}_t} \lim\limits_{\lambda \downarrow 0}v_t(0,\lambda,\xi)} \frac{1}{|\psi_0(z)|}\mathrm{d} z \leq t.
 			\end{equation}
 			Thus, taking into account our assumption, we are forced to conclude that   
 			\begin{equation}\label{eq_limitevxi}
 				\lim_{\lambda \downarrow 0} v_t(0,\lambda ,\xi)=0.
 			\end{equation}
 			In other words, the process is conservative. 
 			
 			On the other hand, we assume that the process is conservative or equivalently that \eqref{eq_limitevxi} holds. We will proceed by contradiction, thus we suppose that 
 			$$ \int_{0+}\frac{1}{|\psi_0(z)|}\mathrm{d} z  < \infty.$$
 			Similar to  the  above arguments, we deduce that 
 			\[\begin{split}
 				-t = \int_{0}^{t} \frac{\mathrm{d} v_t(s,\lambda,\xi)}{-e^{\xi_s}\psi_0\big(e^{-\xi_s}v_t(s,\lambda,\xi)\big)}&\geq \int_{0}^{t} \frac{\mathrm{d} v_t(s,\lambda,\xi)}{e^{\overline{\xi}_t} |\psi_0\big(e^{-\underline{\xi}_t}v_t(s,\lambda,\xi)\big)|}\\&= \frac{e^{\underline{\xi}_t}}{e^{\overline{\xi}_t}} \int_{e^{-\underline{\xi}_t} v_t(0,\lambda, \xi)}^{e^{-\underline{\xi}_t}\lambda} \frac{1}{|\psi_0(z)|}\mathrm{d} z. 
 			\end{split}\]
 			Taking $\epsilon>0$ sufficiently small, we see 
 			\begin{equation*}\label{eq_1}
 				t \leq  \frac{e^{\underline{\xi}_t}}{e^{\overline{\xi}_t}}  \int_{e^{-\underline{\xi}_t} \lambda}^{\epsilon} \frac{1}{|\psi_0(z)|}\mathrm{d} z - \frac{e^{\underline{\xi}_t}}{e^{\overline{\xi}_t}} \int_{e^{-\underline{\xi}_t} v_t(0,\lambda, \xi)}^{\epsilon} \frac{1}{|\psi_0(z)|}\mathrm{d} z.
 			\end{equation*}
 			Hence, by taking $\lambda\downarrow 0$ in the above inequality and using \eqref{eq_limitevxi}, we have $t\leq 0$,  which is a contradiction. Therefore, we deduce that the pure branching mechanism $\psi_0$ satisfies \eqref{eq_Grey}.
 		\end{proof}

 		\section{Subcritical explosive regime: proof of Theorem \ref{teo_explosubcritica}}\label{sub_explosionsub}
 			 The proof of Theorem \ref{teo_explosubcritica} follows similar ideas as those used in the proof of Proposition 3 in Palau and Pardo \cite{palau2018branching}.
 			\begin{proof}[Proof of Theorem \ref{teo_explosubcritica}]
 				Let $z>0$ and $x\in \mathbb{R}$. We begin by observing, from   \eqref{CBILRE} or \eqref{eq_exploproba},  that $\mathbb{P}_{(z,x)}(Z_t<\infty)$ does not depend of the initial value $x$ of the L\'evy process $\xi$. More precisely,  from  \eqref{CBILRE} we can observe that $Z_t$ depends only on the initial condition $Z_0=z$. Thus, again from \eqref{eq_exploproba}, we see
 				\begin{equation*}
 					\lim\limits_{t\to \infty} \mathbb{P}_{z}(Z_t<\infty) =  \lim\limits_{t\to \infty}  \mathbb{E}^{(e)}_{x}\Big[\exp\left\{-zv_t(0,0,\xi-x)\right\}\Big].
 				\end{equation*}
 				 Hence, as soon as we can establish that
 				\begin{equation*}
 					\lim_{t\to \infty} v_t(0,0,\xi-x) < \infty, \quad  \mathbb{P}_{x}^{(e)}-\text{a.s.},
 				\end{equation*}
 				our proof is completed.
				  
				 				From Lemma \ref{prop_monotonia}, we see that the mapping $s\mapsto v_t(s,\lambda e^{-x}, \xi-x)$ is decreasing  and since  $v_t(t,\lambda e^{-x},\xi- x)=\lambda e^{-x}$, we have $v_t(s,\lambda e^{-x},\xi-x)\geq \lambda e^{-x}$ for all $s\in[0,t]$. It follows that, for $s\in[0,t]$ and $\lambda> 0$,
 				\[\begin{split}
 					\frac{\partial }{\partial s} v_t(s,\lambda e^{-x},& \xi-x) =e^{\xi_s-x} \psi_0\big(v_t(s,\lambda e^{-x},\xi -x)e^{-\xi_s+x}\big)\\ &= -v_t(s,\lambda e^{-x},\xi-x)\int_{0}^\infty \exp\{-v_t(s,\lambda e^{-x}, \xi- x)e^{-\xi_s+x} z\}  \bar{\mu}(z)\mathrm{d}z \\ &\geq -v_t(s,\lambda e^{-x},\xi-x)\int_{0}^\infty \exp\{- \lambda e^{-x} e^{-\xi_s+x} z\}  \bar{\mu}(z)\mathrm{d} z.
 				\end{split}\]
 				Therefore by integrating,
 				\begin{equation*}
 					\log v_t(0,\lambda e^{-x},\xi-x)\leq \log (\lambda e^{-x}) +\int_{0}^{t} \int_{0}^\infty \exp\{- \lambda e^{-\xi_s} z\}  \bar{\mu}(z)\mathrm{d} z  \mathrm{d} s.
 				\end{equation*}
 				Moreover, from Lemma \ref{prop_monotonia}, we have that the mapping $\lambda e^{-x} \mapsto v_t(s,\lambda e^{-x},\xi-x)$ is increasing. It tuns out that,
 				\begin{equation}\label{eq:boundv}
 					v_t(0,0,\xi-x)\leq v_t(0,\lambda e^{-x},\xi -x)\leq \lambda e^{-x} \exp\left(\int_{0}^{t} \int_{0}^\infty \exp\{- \lambda e^{-\xi_s} z\}  \bar{\mu}(z)\mathrm{d} z  \mathrm{d} s\right).
 				\end{equation}
 				Using the definition of $\Phi_\lambda$ in \eqref{eq_phihat}, we get
 		\begin{equation}\label{eq_vphihatint}
		\lim\limits_{t\to \infty}v_t(0,0,\xi-x) \leq \lambda e^{-x}\exp\left(\int_{0}^{\infty}\Phi_\lambda(-\xi_s) \mathrm{d} s\right).
\end{equation}
 We recall that  in this regime the process $-\xi$ drifts to $\infty$, $\mathbb{P}^{(e)}_x$ -a.s. Thus, in order to prove that the integral in \eqref{eq_vphihatint} is finite,  let us introduce $\varsigma =\sup\{t\geq 0: -\xi_t\leq 0\}$ and observe that
 				\begin{equation*}\label{eq_2integrals}
 					\int_{0}^{\infty}\Phi_\lambda(-\xi_s) \mathrm{d} s = \int_{0}^{\varsigma}\Phi_\lambda(-\xi_s) \mathrm{d} s\  + \  \int_{\varsigma}^{\infty}\Phi_\lambda(-\xi_s) \mathrm{d} s.
 				\end{equation*}
 				Since $\varsigma <\infty$, $\mathbb{P}^{(e)}_x$ -a.s., it follows that the first integral in the right-hand side above is finite $\mathbb{P}^{(e)}_x$-a.s. For the second integral, we may appeal to Theorem 1 in Erickson and Maller \cite{maller} or the main result in Kolb and Savov \cite{KolSav}. It is important to note that the main result  in \cite{KolSav} extends the result in \cite{maller} but both results coincide in this particular case.  Despite that the aforementioned Theorem is written in terms of L\'evy processes drifting to $+\infty$, it is not so difficult to see that it can be rewritten, using duality, in terms of L\'evy processes drifting to $-\infty$ which is how we are using it  here.  More precisely, since $\Phi_\lambda$ is a finite positive, non-constant and non-increasing function on $[0,\infty)$ and  condition \eqref{eq_Axi} holds, Theorem 1 in \cite{maller} guarantees  that 
 				%, which guarantees, under condition \eqref{eq_Axi}, that
 				\[\int_{\varsigma}^{\infty}\Phi_\lambda(-\xi_s) \mathrm{d} s<\infty, \qquad \mathbb{P}^{(e)}_x-\text{a.s.}\]
Furthermore, if  $\mathbb{E}^{(e)}\big[\xi_1\big]\in (-\infty,0)$, then $\lim\limits_{x\to \infty}A_{\xi}(x)$ is finite. In particular, it follows by integration by parts, that the integral condition \eqref{eq_Axi} is equivalent to 
 				\[\int_{0}^{\infty} \Phi_\lambda(u)\mathrm{d} u <\infty.\]
 				Moreover, we have
 				\[\begin{split}
 					\int_{0}^{\infty} \Phi_\lambda(u)\mathrm{d} u & = \int_{0}^{\infty} \int_{0}^\infty \exp\{- \lambda e^{u} y\}  \bar{\mu}(y)\mathrm{d} y\mathrm{d} u = \int_{0}^{\infty} \int_{1}^{\infty} \frac{\exp\{- \lambda y w \}}{w}  \mathrm{d} w\bar{\mu}(y)\mathrm{d} y.
 				\end{split}\]
 				Now by the definition of the exponential integral given in \eqref{eq_E1def}, we deduce that condition \eqref{eq_Axi} is equivalent to 
 				\[\int_{0}^{\infty}  \normalfont{\texttt{E}_1}(\lambda y) \bar{\mu}(y)\mathrm{d} y<\infty,\]
 				which concludes the proof. 
 			\end{proof}
 			\textcolor{red}{}

 		\section{CSBPs in a conditioned L\'evy environment}\label{sec_explosionconditioned}
			Throughout this section, we shall suppose that the L\'evy process $\xi$ satisfies Spitzer's condition \eqref{eq_spitzer} which in particular implies that the process oscillates. 
			
			As mentioned earlier, the asymptotic behaviour of  the non-explosion probability is related to fluctuations of the L\'evy process $\xi$, specially to its running supremum.  For our purpose, we need to introduce the definition of a CSBP conditioned to stay negative, which roughly speaking means that the running supremum of the auxiliary L\'evy process $\xi$ is negative. Let us therefore spend some time in this section gathering together some of the facts of this conditioned version.

 	Similarly to the definition of L\'evy processes conditioned to stay positive  and following a similar strategy as in the discrete framework in Afanasyev et al. \cite{afanasyev2005criticality}, we would like to introduce a continuous-state branching process in a L\'evy environment conditioned to stay negative as a Doob-$h$ transform. The aforementioned process was first investigated by Bansaye et al.  \cite{bansaye2021extinction} with the aim to study the survival event in a critical L\'evy environment. 
			
	 		\begin{lemma}[Bansaye et. al. \cite{bansaye2021extinction}]\label{teo_bansayemtg}
 			Let $z,x >0$. The process $\{\widehat{U}(\xi_t)\mathbf{1}_{\{\underline{\xi}_t> 0\}}, t\geq 0\}$ is a martingale with respect to $(\mathcal{F}_t)_{t\geq 0}$ and under $\mathbb{P}_{(z,x)}$.
 		\end{lemma}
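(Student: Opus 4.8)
The plan is to verify the martingale property directly, using the fact that the reflected process $\xi-\underline{\xi}$ is a Markov process (Proposition VI.1 in \cite{bertoin1996levy}) together with the classical fact that $\widehat{U}$ is harmonic for $\xi$ killed on entering $(-\infty,0]$. Since the non-explosion probability of $Z$ does not depend on the population component — indeed, from \eqref{CBILRE} the process $Z$ is driven only by its own initial value and the environment $S$, and $\{\underline{\xi}_t>0\}$, $\widehat{U}(\xi_t)$ are $\mathcal{F}^{(e)}_t$-measurable — the martingale in question is in fact a functional of the environment alone. So the statement reduces to showing that $\{\widehat{U}(\xi_t)\mathbf{1}_{\{\underline{\xi}_t>0\}},\ t\ge 0\}$ is an $(\mathcal{F}^{(e)}_t)_{t\ge 0}$-martingale under $\mathbb{P}^{(e)}_x$, and then noting that $\mathcal{F}_t = \mathcal{F}^{(e)}_t\otimes\mathcal{F}^{(b)}_t$ with $\mathbb{P}_{(z,x)}=\mathbb{P}^{(e)}_x\otimes\mathbb{P}^{(b)}_z$, so the result for the product filtration follows immediately.

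First I would record integrability: by \eqref{grandO}, $\widehat{U}(\xi_t)\le C_2\,\xi_t\le C_2\,\overline{\xi}_t$ on $\{\underline{\xi}_t>0\}$, and $\mathbb{E}^{(e)}_x[\overline{\xi}_t]<\infty$ for each fixed $t$ since $\xi$ has no positive jumps larger than... — more carefully, one uses that $\overline{\xi}_t$ has finite mean for a L\'evy process that does not drift to $+\infty$ faster than linearly; in the oscillating case guaranteed by Spitzer's condition this is standard (e.g. via the Wiener–Hopf factorisation, $\mathbb{E}^{(e)}[\overline{\xi}_t]<\infty$). This gives $\widehat{U}(\xi_t)\mathbf{1}_{\{\underline{\xi}_t>0\}}\in L^1(\mathbb{P}^{(e)}_x)$. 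Next, for $0\le s\le t$, I would condition on $\mathcal{F}^{(e)}_s$ and split according to whether $\underline{\xi}_s>0$. On $\{\underline{\xi}_s\le 0\}$ both sides vanish. On $\{\underline{\xi}_s>0\}$, write $\{\underline{\xi}_t>0\}=\{\underline{\xi}_s>0\}\cap\{\inf_{s\le r\le t}\xi_r>0\}$ and apply the Markov property at time $s$: with $\xi'_r:=\xi_{s+r}-\xi_s$ an independent copy of $\xi$ started at $0$,
\[
\mathbb{E}^{(e)}_x\big[\widehat{U}(\xi_t)\mathbf{1}_{\{\underline{\xi}_t>0\}}\,\big|\,\mathcal{F}^{(e)}_s\big]
=\mathbf{1}_{\{\underline{\xi}_s>0\}}\,\mathbb{E}^{(e)}_{y}\big[\widehat{U}(\xi_{t-s})\mathbf{1}_{\{\underline{\xi}_{t-s}>0\}}\big]\Big|_{y=\xi_s}.
\]
So the whole claim reduces to the single identity
\[
\mathbb{E}^{(e)}_{y}\big[\widehat{U}(\xi_{u})\mathbf{1}_{\{\underline{\xi}_{u}>0\}}\big]=\widehat{U}(y),\qquad y>0,\ u\ge 0,
\]
i.e. that $\widehat{U}$ is invariant for the semigroup of $\xi$ killed upon entering $(-\infty,0]$.

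The main obstacle — or rather the one substantive input — is this last identity, which is the statement that $\widehat{U}$ is the harmonic function governing the L\'evy process conditioned to stay positive. I would cite it rather than reprove it: it is Lemma VI.14–style material, and appears explicitly in Chaumont–Doney \cite{doney2007fluctuation} (Section 8) and in the literature on L\'evy processes conditioned to stay positive; it is exactly the ingredient already invoked implicitly via \eqref{eq_lim_M}. Concretely, one uses the excursion-theory identity $\mathbb{E}^{(e)}_y[f(\xi_u);\underline{\xi}_u>0]=\int_{[0,\infty)}\mathbb{P}^{(e)}(\widehat{H}_{\cdot}\in\cdot)$-type representation: the killed process' resolvent applied to a suitable test function recovers $\widehat{U}$, and invariance follows because $\widehat{U}(\mathrm{d}x)$ is the renewal measure of the descending ladder height process, whose potential is harmonic for the killed process by the compensation formula. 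Once this identity is in hand, the martingale property is immediate from the display above, and the extension from $(\mathcal{F}^{(e)}_t)$ to $(\mathcal{F}_t)$ is trivial by the product structure of the space since the integrand is $\mathcal{F}^{(e)}_\infty$-measurable and $\mathcal{F}^{(b)}$ is independent of it. I would also remark that this is precisely the result of Bansaye et al. \cite{bansaye2021extinction} (attribution in the lemma statement), so in the paper itself one may simply quote it; the sketch above indicates why it holds.
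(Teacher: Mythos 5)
Your strategy is the right one, and it is essentially the argument the paper delegates to the literature: the paper offers no proof of this lemma at all, quoting it from \cite{bansaye2021extinction}, and the proof there (going back to the work on L\'evy processes conditioned to stay positive) is exactly the reduction you describe --- discard the branching component by the product structure of $(\Omega,\mathcal{F}_t,\mathbb{P}_{(z,x)})$, apply the Markov property at time $s$ on the event $\{\underline{\xi}_s>0\}$, and invoke the invariance of $\widehat{U}$ for the semigroup of $\xi$ killed on entering $(-\infty,0]$. Two points need repair. First, your integrability argument is false as written: under Spitzer's condition alone $\mathbb{E}^{(e)}[\overline{\xi}_t]$ need not be finite --- the symmetric Cauchy process oscillates and satisfies \eqref{eq_spitzer} with $\rho=1/2$, yet $\overline{\xi}_t\ge \xi_t^{+}$ has no first moment, and the Wiener--Hopf factorisation does not rescue this. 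The detour through $\overline{\xi}_t$ is unnecessary anyway: the integrand is non-negative, so once the identity $\mathbb{E}^{(e)}_y\big[\widehat{U}(\xi_u)\mathbf{1}_{\{\underline{\xi}_u>0\}}\big]=\widehat{U}(y)<\infty$ is in hand, membership in $L^1$ is automatic.

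Second, that identity is the one substantive input and it is not unconditional: for a general L\'evy process $\widehat{U}$ is merely excessive for the killed semigroup, and it is invariant precisely when $\xi$ does not drift to $-\infty$. Here this is guaranteed because the lemma sits in a section where Spitzer's condition \eqref{eq_spitzer} --- hence oscillation --- is a standing assumption; you gesture at this, but it should be stated explicitly since the lemma as written carries no hypothesis on $\xi$ and would be false for, say, a process drifting to $-\infty$ (one then gets a supermartingale, not a martingale). With these two adjustments your outline is complete and coincides with the proof in the cited source.
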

 		%In other words,  they proved that  the process $\{\widehat{U}(\xi_t)\mathbf{1}_{\{\underline{\xi}_t\geq 0\}}, t\geq 0\}$ is a martingale with respect to $(\mathcal{F}_t)_{t\geq 0}$ and under $\p_{(z,x)}$ with $z, x>0$ (see \cite[Lemma 3]{bansaye2021extinction}. 
 		With this in hand, they introduce the law of a \textit{continuous-state branching process  in a L\'evy environment $\xi$ conditioned to stay positive} as follows, for $\Lambda \in \mathcal{F}_t$, $z,x>0$, 
 		\begin{equation*}
 			\mathbb{P}^{\uparrow}_{(z,x)}(\Lambda):=\frac{1}{\widehat{U}(x)}\mathbb{E}_{(z,x)}\big[\widehat{U}(\xi_t)\mathbf{1}_{\{\underline{\xi}_t> 0\}}\mathbf{1}_\Lambda\big],
 		\end{equation*}
 		where $\widehat{U}$ is the renewal function defined in \eqref{eq_renewalfns}. It is natural therefore to cast an eye on similar issues for the study of non-explosion events in a  L\'evy environment. In contrast, we introduce here the process $Z$ in a L\'evy  environment $\xi$ conditioned to stay negative. Recall that $\widehat{\xi}$ is the dual process of $\xi$. 
 		%Denote $\widehat{\p}_{(z,x)}$ the law of the two-dimensional process $(Z, \widehat{\xi})$. 
 		Appealing to duality and Lemma \ref{teo_bansayemtg}, we can see that the process $\{U(-\xi_t)\mathbf{1}_{\{\overline{\xi}_t< 0\}}, t\geq 0\}$ is a martingale with respect to $(\mathcal{F}_t)_{t\geq 0}$ and under $\mathbb{P}_{(z,x)}$ with $z>0$ and $x<0$. Then we introduce the law of the \textit{continuous-state branching process in a L\'evy environment $\xi$ conditioned to stay negative}, as follows: for $\Lambda \in \mathcal{F}_t$ for $z>0$ and $x<0$, 
 		%\begin{equation*}
 		%\p^{\downarrow}_{(z,x)}(\Lambda):=\frac{1}{U(-x)}\widehat{\e}_{(z,-x)}[U(\xi_t)\mathbf{1}_{\{\underline{\xi}_t\geq 0\}}\mathbf{1}_\Lambda].
 		
 		\begin{equation}\label{eq_CDBPnegativo}
 			\mathbb{P}^{\downarrow}_{(z,x)}(\Lambda): =\frac{1}{U(-x)}\mathbb{E}_{(z,x)}\big[U(-\xi_t)\mathbf{1}_{\{\overline{\xi}_t< 0\}}\mathbf{1}_\Lambda\big].
 		\end{equation}
 		%\frac{1}{U(-x)}\e_{(z,x)}[U(-\xi_t)\mathbf{1}_{\{\underline{-\xi}_t\geq 0\}}\mathbf{1}_\Lambda]
 		%This change of measure is the well-known Doob $h$-transform from the theory of Markov processes.  
 		Intuitively speaking, $\mathbb{P}^{\uparrow}_{(z,x)}$ and $\mathbb{P}^{\downarrow}_{(z,x)}$ correspond to the law of $(Z,\xi)$ conditioning the random environment $\xi$  to not enter $(-\infty,0)$ and $(0,\infty)$, respectively.

% 		In order to study this  relationship we recall that $\mathbb{P}^{\downarrow}_{(z,x)}$ corresponds to the law of $(Z,\xi)$ conditioned to be negative.
 		
 		The following convergence result is crucial for Theorem \ref{teo_explocritica}. 
 		%Similar ideas to those used in the proof can be found in Lemma 2.5 of \cite{afanasyev2005criticality} and also in  Lemma 1 of \cite{bansaye2021extinction} for the discrete and continuous setting, respectively. 
 		%We provide its proof for the sake of completeness.
 		%Nonetheless, note that a result similar to  \cite[Lemma 2]{bansaye2019scaling} was not necessary in our proof. 
 		The proof can be obtained directly using duality and Lemma 3.2 in \cite{bansaye2021extinction}.
 			%however we present the complete proof to the sake of completeness. 
 	%	\nat{Citamos la prueba de \cite[Lemma 3.2]{bansaye2021extinction} y la dualidad? Lo mismo para lemma 3.4}
 		\begin{lemma}\label{prop_maxR}
 			Fix  $z>0,\  x<0$	and assume that Spitzer's condition \eqref{eq_spitzer} holds. Let $R_s$ be a bounded real-valued $\mathcal{F}_s$-measurable random  variable. Then
 			\begin{equation*}
 				\lim\limits_{t\to \infty} \mathbb{E}_{(z,x)} \big[R_s \ | \ \overline{\xi}_t < 0 \big]= \mathbb{E}^{ \downarrow}_{(z,x)}\big[R_s\big].
 			\end{equation*}
 			More generally, let $(R_t, t\geq 0)$ be a uniformly bounded  real-valued  process adapted to the filtration $(\mathcal{F}_t, t\geq 0)$, which converges $\mathbb{P}^{\downarrow}_{(z,x)}$-a.s., to some random variable $R_\infty$. Then
 			\begin{equation*}
 				\lim\limits_{t\to \infty}\mathbb{E}_{(z,x)}\big[R_t \ | \ \overline{\xi}_t < 0\big] = \mathbb{E}^{ \downarrow}_{(z,x)}\big[R_\infty\big].
 			\end{equation*}
 		\end{lemma}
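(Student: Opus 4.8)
The plan is to deduce both statements from the analogous results for the conditioned-to-stay-positive environment established in Bansaye et al.\ \cite{bansaye2021extinction} (their Lemma~3.2), by invoking the duality between $\xi$ and $\widehat\xi=-\xi$. First I would record the precise duality dictionary: fixing $z>0$ and $x<0$, the event $\{\overline\xi_t<0\}$ under $\mathbb{P}_{(z,x)}$ corresponds, after replacing $\xi$ by $\widehat\xi$ and $x$ by $-x>0$, to the event $\{\underline{\widehat\xi}_t>0\}$ under $\mathbb{P}_{(z,-x)}$ in the dual picture; and the $h$-transform defining $\mathbb{P}^{\downarrow}_{(z,x)}$ in \eqref{eq_CDBPnegativo} with harmonic function $U(-\cdot)$ is exactly the image under this duality of the $h$-transform defining $\mathbb{P}^{\uparrow}_{(z,-x)}$ with harmonic function $\widehat U(\cdot)$, since the renewal function $U$ of the ascending ladder height of $\xi$ is the renewal function $\widehat U$ of the descending ladder height of $\widehat\xi$. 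This is the content of the martingale claim already noted in the excerpt just above \eqref{eq_CDBPnegativo}.

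Granting this dictionary, the first assertion is immediate: for a bounded $\mathcal{F}_s$-measurable $R_s$, write
\[
\mathbb{E}_{(z,x)}\big[R_s \mid \overline\xi_t<0\big]
=\frac{\mathbb{E}_{(z,x)}\big[R_s\,\mathbf{1}_{\{\overline\xi_t<0\}}\big]}{\mathbb{P}_{(z,x)}\big(\overline\xi_t<0\big)},
\]
translate both numerator and denominator into the dual picture, apply Lemma~3.2 of \cite{bansaye2021extinction} to the dual process started from $-x>0$ (which yields convergence of the conditional expectation given $\{\underline{\widehat\xi}_t>0\}$ to $\mathbb{E}^{\uparrow}$ in the dual picture), and then translate the limit back, obtaining $\mathbb{E}^{\downarrow}_{(z,x)}[R_s]$. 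Here one uses \eqref{eq_lim_M} (equivalently the corresponding statement in \cite{bansaye2021extinction}) to guarantee that the denominator is asymptotically equivalent to $\kappa(1/t,0)U(-x)/\sqrt\pi>0$, so the ratio is well-defined for large $t$ and the limits may be combined.

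For the second, more general assertion, the strategy is the standard truncation argument: given a uniformly bounded process $(R_t)$ adapted to $(\mathcal{F}_t)$ converging $\mathbb{P}^{\downarrow}_{(z,x)}$-a.s.\ to $R_\infty$, split $R_t=R_s+(R_t-R_s)$ for a fixed large $s$, apply the first assertion to the $\mathcal{F}_s$-measurable part, and control the remainder using that $\mathbb{E}_{(z,x)}[\,|R_t-R_s|\mid\overline\xi_t<0\,]\to \mathbb{E}^{\downarrow}_{(z,x)}[\,|R_\infty-R_s|\,]$ (again via the dual version of Lemma~3.2, now with the bounded functional $|R_t-R_s|$, legitimate since $R$ is uniformly bounded), which tends to $0$ as $s\to\infty$ by dominated convergence under $\mathbb{P}^{\downarrow}_{(z,x)}$. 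Again this is exactly how the analogous ``more generally'' clause is proved in \cite{bansaye2021extinction}, so the only real work is checking that the duality transformation maps the hypotheses and the $h$-transform correctly. The main obstacle, such as it is, is precisely this bookkeeping: one must be careful that the Doob $h$-transform, the reflected/conditioned event, and the role of the starting point $x\leftrightarrow -x$ are all consistently dualised, and that $\xi$ not being a compound Poisson process (assumed in Section~\ref{sec_defandprop}) is what makes the conditioned laws and the convergence in \eqref{eq_lim_M} well-behaved; once the dictionary is in place, the convergence itself is inherited verbatim from \cite{bansaye2021extinction}.
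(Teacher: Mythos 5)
Your proposal is correct and follows essentially the same route as the paper, which simply observes that the result is obtained by duality from Lemma~3.2 of Bansaye et al.\ \cite{bansaye2021extinction}; your duality dictionary (the correspondence $\{\overline{\xi}_t<0\}$ under $\mathbb{P}_{(z,x)}$ with $\{\underline{\xi}_t>0\}$ for the dual started at $-x$, and the matching of the $h$-functions $U(-\cdot)$ and $\widehat U(\cdot)$) together with the asymptotics in \eqref{eq_lim_M} is exactly the bookkeeping the paper leaves implicit.
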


 			Recall from Theorem \ref{teo_existencia} that the quenched law of the process $(Z_t e^{-\xi_t}, t\geq 0)$ is completely characterised by the functional $v_t(s,\lambda,\xi)$. In the case of conditioned environment we have a similar result. We formalise this in the following lemma whose proof essentially mimics the steps of Proposition 3.3 in  \cite{bansaye2021extinction} and  identity  \eqref{survprob}. 
 			
 			\begin{lemma}\label{lem_leynegativa}
 				For each $z>0$, $x<0$ and $\lambda \geq 0$, we have 
 				\begin{equation}
 					\mathbb{E}_{(z,x)}^{\downarrow}\Big[\exp\big\{-\lambda Z_t e^{-\xi_t}\big\}\Big] = \mathbb{E}_x^{(e),\downarrow}\Big[\exp\{-z v_t(0,\lambda e^{-x} ,\xi-x)\}\Big].
 				\end{equation}
 				In particular, 
 				\begin{equation*}
 					\mathbb{P}^{\downarrow}_{(z,x)}(Z_t<\infty) =   \mathbb{E}^{(e),\downarrow}_{x}\Big[\exp\left\{-zv_t(0,0,\xi-x)\right\}\Big],
 				\end{equation*}
				and 
			\begin{equation*}
 					\mathbb{P}^{\downarrow}_{(z,x)}(Z_t>0) =  1.
 				\end{equation*}
 			\end{lemma}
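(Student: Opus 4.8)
The plan is to transfer the quenched characterisation in Theorem~\ref{teo_existencia} through the Doob $h$-transform defining $\mathbb{P}^{\downarrow}_{(z,x)}$. First I would unfold the definition \eqref{eq_CDBPnegativo}: for $z>0$, $x<0$, $\lambda\geq 0$ and $t\geq 0$,
\[
\mathbb{E}_{(z,x)}^{\downarrow}\Big[\exp\big\{-\lambda Z_t e^{-\xi_t}\big\}\Big] = \frac{1}{U(-x)}\mathbb{E}_{(z,x)}\Big[U(-\xi_t)\mathbf{1}_{\{\overline{\xi}_t<0\}}\exp\big\{-\lambda Z_t e^{-\xi_t}\big\}\Big].
\]
Since $U(-\xi_t)\mathbf{1}_{\{\overline{\xi}_t<0\}}$ is $\sigma(\xi)$-measurable, I would condition on the environment $\xi$ inside the expectation and apply \eqref{eq_quenchedlaw} of Theorem~\ref{teo_existencia}, which gives
$\mathbb{E}_{(z,x)}[\exp\{-\lambda Z_t e^{-\xi_t}\}\mid \xi] = \exp\{-z\, v_t(0,\lambda e^{-x},\xi-x)\}$. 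Substituting back and observing that the resulting integrand depends only on $\xi$, the demographic coordinate disappears and we obtain
\[
\mathbb{E}_{(z,x)}^{\downarrow}\Big[\exp\big\{-\lambda Z_t e^{-\xi_t}\big\}\Big] = \frac{1}{U(-x)}\mathbb{E}_{x}^{(e)}\Big[U(-\xi_t)\mathbf{1}_{\{\overline{\xi}_t<0\}}\exp\big\{-z\, v_t(0,\lambda e^{-x},\xi-x)\big\}\Big],
\]
which is exactly $\mathbb{E}_x^{(e),\downarrow}\big[\exp\{-z\, v_t(0,\lambda e^{-x},\xi-x)\}\big]$ by the analogous $h$-transform definition of the L\'evy environment conditioned to stay negative (here I invoke Lemma~\ref{teo_bansayemtg} together with duality, exactly as in the passage preceding \eqref{eq_CDBPnegativo}, to justify that $U(-\xi_t)\mathbf{1}_{\{\overline{\xi}_t<0\}}$ is a martingale). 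This proves the first identity.

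For the non-explosion statement I would let $\lambda\downarrow 0$ in the identity just established. On the left-hand side, monotone convergence gives $\mathbb{P}^{\downarrow}_{(z,x)}(Z_t<\infty)$, as in the derivation of \eqref{eq_exploproba}. On the right-hand side, Lemma~\ref{prop_monotonia} tells us $\lambda\mapsto v_t(0,\lambda e^{-x},\xi-x)$ is increasing, so $v_t(0,\lambda e^{-x},\xi-x)\downarrow v_t(0,0,\xi-x)$ as $\lambda\downarrow 0$; since the integrand $U(-\xi_t)\mathbf{1}_{\{\overline{\xi}_t<0\}}\exp\{-z\,v_t(0,\lambda e^{-x},\xi-x)\}$ is nonnegative and monotone in $\lambda$, monotone convergence again yields $\mathbb{E}^{(e),\downarrow}_x[\exp\{-z\,v_t(0,0,\xi-x)\}]$, which is the claimed expression.

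For the survival statement $\mathbb{P}^{\downarrow}_{(z,x)}(Z_t>0)=1$ I would instead let $\lambda\uparrow\infty$. The conditioned law $\mathbb{P}^{\downarrow}_{(z,x)}$ is absolutely continuous with respect to $\mathbb{P}_{(z,x)}$ on each $\mathcal{F}_t$ with density $U(-\xi_t)\mathbf{1}_{\{\overline{\xi}_t<0\}}/U(-x)$, so the $\mathbb{P}_{(z,x)}$-a.s.\ event $\{Z_t>0\}$ from \eqref{survprob} is also of full $\mathbb{P}^{\downarrow}_{(z,x)}$-measure; alternatively one can mirror the computation above, using $v_t(0,\lambda e^{-x},\xi-x)\ge \lambda e^{-x}\to\infty$ together with $\overline{\xi}_t<0$ (so $e^{-x}>1$ on the relevant event) exactly as in the derivation of \eqref{survprob}, to get $\mathbb{E}^{\downarrow}_{(z,x)}[\exp\{-\lambda Z_t e^{-\xi_t}\}]\to 0$. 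I do not expect a serious obstacle here; the only point requiring care is the measurability/conditioning step --- making sure the $h$-transform density factors through $\sigma(\xi)$ so that Theorem~\ref{teo_existencia} can be applied under the conditional expectation --- and the bookkeeping of the deterministic shifts by $x$ in the argument of $v_t$, which is already handled by \eqref{eq_quenchedlaw}.
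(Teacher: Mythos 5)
Your proposal is correct and follows exactly the route the paper intends: the paper only sketches this proof by reference to Proposition 3.3 of \cite{bansaye2021extinction} and identity \eqref{survprob}, and your argument --- unfolding the Doob $h$-transform, using that the density $U(-\xi_t)\mathbf{1}_{\{\overline{\xi}_t<0\}}/U(-x)$ is $\sigma(\xi)$-measurable so that \eqref{eq_quenchedlaw} can be applied under the conditional expectation, then taking $\lambda\downarrow 0$ by monotone convergence and deducing the survival statement from \eqref{survprob} via absolute continuity on $\mathcal{F}_t$ --- is precisely that argument. No gaps.
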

 			
% 			Essentially the proof mimics the steps of  \cite[Proposion 3.3]{bansaye2021extinction}. 
 		%	However, we present it here  for the sake of completeness. 
 			
% 			\begin{proof}
% 				Let $z>0$ and $x<0$. Using  the law of a  CSBP in a L\'evy environment $\xi$ conditioned to stay negative given in \eqref{eq_CDBPnegativo} followed by conditioning on the environment, we obtain for every $\lambda\geq 0$, 
% 				\[\begin{split}
% 					\mathbb{E}_{(z,x)}^{\downarrow}\Big[\exp\big\{-\lambda Z_t e^{-\xi_t}\big\}\Big] &=	\frac{1}{U(-x)}\mathbb{E}_{(z,x)}\Big[U(-\xi_t)\mathbf{1}_{\{\overline{\xi}_t< 0\}} e^{-\lambda Z_t e^{-\xi_t}}\Big] \\ &= \frac{1}{U(-x)}\mathbb{E}_{(z,0)}\Big[U(-\xi_t-x)\mathbf{1}_{\{\overline{\xi}_t< -x\}} \mathbb{E}_{(z,0)}\Big[e^{-\lambda e^{-x}Z_t e^{-\xi_t}} \Big|\Big. \xi\Big]\Big] \\&= \frac{1}{U(-x)}\mathbb{E}_{(z,0)}\Big[U(-\xi_t-x)\mathbf{1}_{\{\overline{\xi}_t< -x\}} e^{-zv_t(0,\lambda e^{-x}, \xi)}\Big] \\&= \mathbb{E}_x^{(e), \downarrow}\Big[\exp\{-zv_t(0,\lambda e^{-\xi_0},\xi-\xi_0)\}\Big].
% 				\end{split}\]
% 				Further, by letting $\lambda\downarrow 0$, we deduce that 
% 				\begin{equation*}
% 					\mathbb{P}^{\downarrow}_{(z,x)}(Z_t<\infty) =   \mathbb{E}^{(e),\downarrow}_{x}\Big[\exp\left\{-zv_t(0,0,\xi-\xi_0)\right\}\Big].
% 				\end{equation*}
% 			\end{proof}
 			
 			The following lemma states that, with respect to $ \mathbb{P}^{\downarrow}_{(z,x)}$, the population has positive probability to be finite forever. In other words, $Z$ has a positive probability to be finite when the running supremum of the L\'evy environment is negative.  
			 			The statement holds under the  moment condition \eqref{eq_Hyp_SubH} of the L\'evy measure $\mu$.

 			\begin{lemma}\label{lem_proba_ext}
 				Assume that the L\'evy process $\xi$ satisfies Spitzer's condition \eqref{eq_spitzer} and condition \eqref{eq_Hyp_SubH}. Then,  for $z >0$ and $x<0$, we have
 				\begin{equation*}
 					\lim\limits_{t\to \infty} \mathbb{P}^{\downarrow}_{(z,x)}\left(Z_t<\infty\right) > 0.
 				\end{equation*}
 			\end{lemma}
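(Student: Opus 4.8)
The plan is to reduce the statement, via Lemma~\ref{lem_leynegativa} and the pathwise bound on $v_t$ obtained while proving Theorem~\ref{teo_explosubcritica}, to the $\mathbb{P}_{x}^{(e),\downarrow}$-a.s. finiteness of the exponential functional $\int_0^\infty \Phi_\lambda(-\xi_s)\,\mathrm{d} s$, and then to estimate this functional by means of the renewal functions $U,\widehat U$ together with hypothesis~\eqref{eq_Hyp_SubH}.

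First I would set up the reduction. Fix $z>0$ and $x<0$. By Lemma~\ref{lem_leynegativa}, $\mathbb{P}^{\downarrow}_{(z,x)}(Z_t<\infty)=\mathbb{E}^{(e),\downarrow}_{x}\big[\exp\{-zv_t(0,0,\xi-x)\}\big]$. Since $\{Z_t<\infty\}$ decreases in $t$ (once $Z$ explodes it stays at $\infty$), the quenched non-explosion probability $\exp\{-zv_t(0,0,\xi-x)\}$ is non-increasing in $t$, hence $t\mapsto v_t(0,0,\xi-x)$ is non-decreasing; set $v_\infty:=\lim_{t\to\infty}v_t(0,0,\xi-x)\in[0,\infty]$. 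By bounded convergence, $\lim_{t\to\infty}\mathbb{P}^{\downarrow}_{(z,x)}(Z_t<\infty)=\mathbb{E}^{(e),\downarrow}_{x}[e^{-zv_\infty}]$, which is strictly positive as soon as $\mathbb{P}_{x}^{(e),\downarrow}(v_\infty<\infty)>0$. Now the bound \eqref{eq_vphihatint}, whose derivation used only the backward equation \eqref{eq_BDEv}, the inequality $v_t\ge\lambda e^{-x}$ and the identity \eqref{eq_phitail}, and is therefore valid for an arbitrary L\'evy trajectory, gives $v_\infty\le\lambda e^{-x}\exp\big(\int_0^\infty\Phi_\lambda(-\xi_s)\,\mathrm{d} s\big)$ for every $\lambda>0$, $\mathbb{P}_{x}^{(e),\downarrow}$-a.s. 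Since $\lambda\mapsto\Phi_\lambda$ is non-increasing, it thus suffices to show that $\int_0^\infty\Phi_\lambda(-\xi_s)\,\mathrm{d} s<\infty$, $\mathbb{P}_{x}^{(e),\downarrow}$-a.s., for one (hence all large) $\lambda>0$; as $\Phi_\lambda\ge0$, it is enough to bound its $\mathbb{P}_{x}^{(e),\downarrow}$-expectation.

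To do so I would unfold the Doob $h$-transform \eqref{eq_CDBPnegativo} (whose Radon--Nikodym weight $U(-\xi_t)\mathbf{1}_{\{\overline{\xi}_t<0\}}$ involves only the environment) with $h=U(-\cdot)$, and apply Tonelli: with $\tau_0^+:=\inf\{s\ge0:\xi_s\ge0\}$ and $\{\overline{\xi}_s<0\}=\{s<\tau_0^+\}$,
\[
\mathbb{E}^{(e),\downarrow}_{x}\Big[\int_0^\infty\Phi_\lambda(-\xi_s)\,\mathrm{d} s\Big]=\frac{1}{U(-x)}\,\mathbb{E}^{(e)}_{x}\Big[\int_0^{\tau_0^+}U(-\xi_s)\,\Phi_\lambda(-\xi_s)\,\mathrm{d} s\Big].
\]
The expectation on the right is the integral of $y\mapsto U(-y)\Phi_\lambda(-y)$ against the potential measure of $\xi$ killed at first passage into $[0,\infty)$, which by the Wiener--Hopf factorisation is a convolution-type expression built from the ascending and descending ladder-height renewal measures $U(\mathrm{d} y)$ and $\widehat U(\mathrm{d} y)$ (see e.g.\ Doney~\cite{doney2007fluctuation}, Section~8, or Bertoin~\cite{bertoin1996levy}, Chapter~VI). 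Inserting this expression and then passing to the $\bar{\mu}$-representation of $\Phi_\lambda$ coming from \eqref{eq_phitail} (so that the $\mathrm{d} s$-integral becomes, after the substitution $z=e^{-u}$ and one integration by parts, an integral against $\mu$ of the type handled in the proof of Theorem~\ref{teo_explosubcritica}), one finds the whole quantity dominated, up to constants depending only on $x$ and $\lambda$, by $\int_{0^+}z\,U(-\ln z)\widehat U(-\ln z)\,\mu(\mathrm{d} z)$, which is finite by \eqref{eq_Hyp_SubH}. Hence $\int_0^\infty\Phi_\lambda(-\xi_s)\,\mathrm{d} s<\infty$ $\mathbb{P}_{x}^{(e),\downarrow}$-a.s., so $v_\infty<\infty$ $\mathbb{P}_{x}^{(e),\downarrow}$-a.s., and therefore $\lim_{t\to\infty}\mathbb{P}^{\downarrow}_{(z,x)}(Z_t<\infty)=\mathbb{E}^{(e),\downarrow}_{x}[e^{-zv_\infty}]>0$. (Spitzer's condition \eqref{eq_spitzer} is used here only through the standing assumption that $\xi$ oscillates, which makes $\mathbb{P}_{x}^{(e),\downarrow}$ a genuine conditioning and forces $-\xi_s\to\infty$ under it.)

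The main obstacle is the last step: producing a form of the killed potential measure clean enough that, after the change of variables, the bound reproduces \emph{exactly} the combination $U(-\ln z)\widehat U(-\ln z)$ of \eqref{eq_Hyp_SubH}, and not a strictly stronger moment condition; for instance the crude estimate $U(y),\widehat U(y)\le C\,y$ from \eqref{grandO} would only yield finiteness under $\int_{0^+}z\ln^2(z)\,\mu(\mathrm{d} z)<\infty$. It is precisely the appearance of \emph{two} renewal functions --- one from the $h$-transform weight $U(-\cdot)$ and one intrinsic to the potential of the killed process --- together with careful bookkeeping of the shifts in the convolution, that produces the sharp hypothesis \eqref{eq_Hyp_SubH}. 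Alternatively, one could argue pathwise, decomposing $\int_0^\infty\Phi_\lambda(-\xi_s)\,\mathrm{d} s$ along the successive last-passage levels of $-\xi$ under $\mathbb{P}_{x}^{(e),\downarrow}$ and bounding the corresponding occupation times by the same renewal estimates.
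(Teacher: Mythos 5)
Your proposal follows essentially the same route as the paper: reduce via Lemma~\ref{lem_leynegativa} and the bound \eqref{eq_vphihatint} to the a.s.\ finiteness of $\int_0^\infty\Phi_\lambda(-\xi_s)\,\mathrm{d}s$ under $\mathbb{P}_x^{(e),\downarrow}$, unfold the $h$-transform with Tonelli, and express the killed potential as a convolution of the ladder-height renewal measures (the paper cites Theorem~VI.20 of Bertoin for exactly this). The one step you flag as the obstacle is resolved in the paper by splitting $\Phi_\lambda(u)\le\int_{(0,e^{-u}/\lambda]}z\,\mu(\mathrm{d}z)+\frac{e^{-u}}{\lambda}\bar\mu(e^{-u}/\lambda)$, the first piece yielding precisely $\int_{0^+}z\,U(-\ln z)\widehat U(-\ln z)\,\mu(\mathrm{d}z)$ and the second being finite because $\int e^{-y}y\,\mathrm{d}\widehat U(y)<\infty$ by \eqref{grandO} and \eqref{bivLap}.
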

			Note that such behaviour is similar to the behaviour in the subcritical explosive regime (i.e. when the environment drifts to $-\infty$) given in Theorem \ref{teo_explosubcritica}.
 			
 			\begin{proof}
 				Let $z>0$ and $x<0$. From Lemma \ref{lem_leynegativa}, we already know the formula,
 				\begin{equation*}
 					\lim\limits_{t\to \infty} \mathbb{P}^{\downarrow}_{(z,x)}(Z_t<\infty) =  \lim\limits_{t\to \infty}  \mathbb{E}^{(e),\downarrow}_{x}\Big[\exp\left\{-zv_t(0,0,\xi-x)\right\}\Big].
 				\end{equation*}
 				Then, similarly as in Theorem \ref{teo_explosubcritica}, in order to deduce our result  we will  show that 
 				\begin{equation}\label{eq_vfinite}
 					\lim_{t\to \infty} v_t(0,0,\xi-x) < \infty, \qquad  \mathbb{P}_{x}^{(e),\downarrow}-\text{a.s.}
 				\end{equation}
 				We recall from the proof of Theorem \ref{teo_explosubcritica} that for all $\lambda>0$
 				\[
				\lim\limits_{t\to \infty}v_t(0,0,\xi-x) \leq \lambda e^{-x}\exp\left(\int_{0}^{\infty} \Phi_\lambda(-\xi_s) \mathrm{d} s\right).
				\]
 				Now, if the right-hand side of the above inequality is finite $\mathbb{P}_{x}^{(e),\downarrow}-\text{a.s.}$ then \eqref{eq_vfinite} holds. 
 				%\nat{We recall that  by definition of the measure $\mathbb{P}_{x}^{(e),\downarrow}$, we have that  $-\xi$ drifts to $\infty$, $\mathbb{P}_{x}^{(e),\downarrow}-\text{a.s.}$. Thus, using the same arguments as at the end of proof  of Theorem \ref{teo_explosubcritica}, we have that under condition \eqref{eq_Axi}, 
 			%	\[\int_{0}^{\infty} \Phi_\lambda(-\xi_s) \mathrm{d} s< \infty, \quad \mathbb{P}_{x}^{(e),\downarrow}-\text{a.s.},\]
 			%	which concludes the proof.}
			The result is thus proved once we show that
				\begin{equation*}
 					\mathbb{E}_{x}^{(e),\downarrow}\left[\int_{0}^{\infty}\Phi_\lambda(-\xi_s) \mathrm{d} s\right] < \infty.
				\end{equation*}
 				First, with the help of Fubini's Theorem and the definition of the measure $\mathbb{P}_{x}^{(e),\downarrow}$ in terms of $\widehat{\mathbb{P}}_{x}^{(e)}$  (the law of the dual of $\xi$) we obtain 
				\[
				\begin{split}
					\mathbb{E}_{x}^{(e),\downarrow}\left[\int_{0}^{\infty}\Phi_\lambda(-\xi_s) \mathrm{d} s\right] &= \frac{1}{U(-x)}\int_{0}^{\infty} \widehat{\mathbb{E}}^{(e)}_{-x}\big[U(\xi_s)\Phi_\lambda(\xi_s)\mathbf{1}_{\{\underline{\xi}_s> 0\}}\big] \mathrm{d} s \\ &= \frac{1}{U(-x)} \widehat{\mathbb{E}}^{(e)}_{-x}\left[\int_{0}^{\tau_0^-}U(\xi_s)\Phi_\lambda(\xi_s)\mathrm{d} s\right].
			\end{split}
			\]
				Now, applying Theorem VI.20 in Bertoin \cite{bertoin1996levy} to the dual process $\widehat{\xi}=-\xi$ and the function $f(y)=U(y)\Phi_\lambda(y), \ y\geq 0$, we deduce that,  there exists a constant $k>0$ such that
				\begin{equation*}
					\widehat{\mathbb{E}}^{(e)}_{-x}\left[\int_{0}^{\tau_0^-}U(\xi_s)\Phi_\lambda(\xi_s)\mathrm{d} s\right]= k \int_{[0,\infty)} \mathrm{d} \widehat{U}(y)\int_{[0,-x]} \mathrm{d}U(z)U(y-x-z)\Phi_\lambda(y-x-z).
 				\end{equation*}
 				For the sake of simplicity we take $k=1$ (we may choose a  normalisation of the local time in order to have $k=1$). Observe that, for any $z\in[0,-x]$ and $y\geq 0$, we have $y-x-z\leq y-x$. Further, since $U(\cdot)$  and $\Phi_\lambda(\cdot)$ are increasing functions, we deduce that  $U(y-x-z)\leq U(y-x)$ and $\Phi_\lambda(y-x-z)\leq \Phi_\lambda(y)$, which implies
				\[
 				\begin{split}
 					\widehat{\mathbb{E}}^{(e)}_{-x}\left[\int_{0}^{\tau_0^-}U(\xi_s)\Phi_\lambda(\xi_s)\mathrm{d} s\right]&\leq  \int_{[0,\infty)} \mathrm{d} \widehat{U}(y)\int_{[0,-x]} \mathrm{d}U(z)U(y-x)\Phi_\lambda(y) \\ &= (U(-x)-U(0))\int_{[0,\infty)} \mathrm{d} \widehat{U}(y)U(y-x)\Phi_\lambda(y).
 				\end{split}
				\]
Next recall that we may rewrite  the function $\Phi_\lambda$  as follows,
 \[
 				\begin{split}
 					\Phi_\lambda(u)&= \int_{0}^\infty \exp\{- \lambda e^{u} z\}  \bar{\mu}(z)\mathrm{d} z   \\
					&= \frac{e^{-u}}{\lambda }\int_{(0,\infty)} \Big(1-\exp\{- \lambda e^{u} z\}\Big)  \mu(\mathrm{d}z),
 				\end{split}
				\]
				implying that 
				\[
 				\begin{split}
 					\Phi_\lambda(u)&\le \int_{(0,\frac{e^{-u}}{\lambda }]}z \mu(\mathrm{d}z)+  \frac{e^{-u}}{\lambda }\int_{(\frac{e^{-u}}{\lambda }, \infty)}\mu(\mathrm{d}z).
					 				\end{split}
				\]
Hence
		 \[
 				\begin{split}		
				\int_{[0,\infty)}\mathrm{d} \widehat{U}(y)U(y-x)\frac{e^{-y}}{\lambda }\int_{(\frac{e^{-y}}{\lambda }, \infty)}\mu(\mathrm{d}z)&\le \frac{ \bar{\mu}(\lambda^{-1})}{\lambda }\int_{[0,\infty)}\mathrm{d} \widehat{U}(y)U(y-x)e^{-y} \\
				& \leq C_1\frac{ \bar{\mu}(\lambda^{-1})}{\lambda }
					\int_{[0,\infty)}\mathrm{d} \widehat{U}(y)ye^{-y} \\
					&=C_1	\frac{\bar{\mu}(\lambda^{-1})}{\lambda }	\frac{\widehat{\kappa}^\prime(0,1)}{\widehat{\kappa}(0,1)}<\infty,
					\end{split}
				\]
				where in the second inequality and in the last identity we have used, respectively, inequality \eqref{grandO} and  identity \eqref{bivLap}, but in terms of $\widehat{U}$ and after taking the first derivative. Now, we consider
				 \[
 				\begin{split}		
				\int_{[0,\infty)}\mathrm{d} \widehat{U}(y)U(y-x)\int_{(0,\frac{e^{-y}}{\lambda }]}z \mu(\mathrm{d}z)&\le \int_{(0,\infty)}\mu(\mathrm{d}z)z \int_{[0,\infty)}\mathrm{d} \widehat{U}(y)U(y-x)\mathbf{1}_{\{z<e^{-y}\lambda^{-1}\}} \\
				& \leq  \int_{(0,\lambda^{-1})}\mu(\mathrm{d}z)z \int_{[0,-\ln(z\lambda))}\mathrm{d} \widehat{U}(y)U(y-x) \\
					&\le \int_{(0,\lambda^{-1})}\mu(\mathrm{d}z)z \widehat{U}(-\ln(z\lambda))U(-\ln(z\lambda)-x),
					\end{split}
				\]
which is clearly finite from condition \eqref{eq_Hyp_SubH}.
 				Hence putting all pieces together, we obtain
 				\[
 					\mathbb{E}_{x}^{(e),\downarrow}\left[\int_{0}^{\infty}\Phi_\lambda(\xi_s) \mathrm{d} s\right] <\infty.
 				\]
			This concludes the proof.
 			\end{proof}
 		It is important to note that in Baguley et al. \cite{BaDoKy} there is a necessary and sufficient condition (integral test) for the finiteness of path integrals for standard Markov processes, see Theorem 2.3 and comments below, in terms of their potential measures. The latter  is in line with our  necessary condition for the finiteness of 
			\[
			\int_{0}^{\infty}\Phi_\lambda(-\xi_s) \mathrm{d} s,
			\]
			under $\mathbb{P}_{x}^{(e),\downarrow}$.

 			\section{Critical explosive regime: proof of Theorem \ref{teo_explocritica}}\label{sec_explosiontheoremrates}
			Throughout this section, we shall suppose that the L\'evy process $\xi$ satisfies Spitzer's condition \eqref{eq_spitzer}.

			The strategy  of our proof follows similar arguments as in \cite{bansaye2021extinction}, where the extinction event has been considered,   that is we split the event $\{Z_t<\infty\}$ into two events by considering the behaviour of the running supremum of the environment. 
			%This strategy has been used before to deal with the absorption event in the discrete setting as well as in the continuous setting (see for instance \cite{afanasyev2005criticality}  and \cite{bansaye2021extinction, cardona2021speed, cardona2023speed}), where the survival event is splitted when the running infimum of the environment is either positive or negative. 
			More precisely,  we split the non-explosion event  into either unfavourable environments, i.e. when the running supremum is  negative, or favourable environments, i.e. when the running supremum is positive. 
 			
 			Before we prove Theorem  \ref{teo_explocritica}, we introduce several useful results. Lemmas  \ref{prop_maxR}  and \ref{lem_proba_ext} allow us to establish the following result which describes the limit of the non-explosion probability  when the associated environment is conditioned to be negative.  
 			
 			\begin{proposition}\label{prop_cotanocero}
 				Suppose that conditions  \eqref{eq_spitzer} and \eqref{eq_Hyp_SubH}  are satisfied. Then for every $z >0$ and $x<0$, there exists $0<c(z,x)<\infty$ such that
 				%	\begin{equation}\label{eq_cota1}
 					%	\p_{(z,x)}(Z_t<\infty, \overline{\xi}_t \leq  0) \sim c(z,x)U(-x)\kappa(1/t,0)\quad \text{as}\quad t \to \infty.
 					%	\end{equation}
 				%	\nat{
 					%		\begin{equation}\label{eq_cota1}
 						%	\widehat{\p}_{(z,-x)}(Z_t<\infty, \underline{\xi}_t \geq  0) \sim c(z,x)U(-x)\kappa(1/t,0)\quad \text{as}\quad t \to \infty.
 						%	\end{equation}}
 				
 				\begin{equation}\label{eq_cota1}
 					\lim\limits_{t \to \infty}\frac{1}{\kappa(1/t,0)}\mathbb{P}_{(z,x)}\Big(Z_t<\infty, \ \overline{\xi}_t <  0\Big) = c(z,x)U(-x) .
 				\end{equation}
 			\end{proposition}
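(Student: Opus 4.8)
The plan is to split the non-explosion probability on the event $\{\overline{\xi}_t<0\}$ by first conditioning on the environment and then using the conditioned law $\mathbb{P}^{\downarrow}_{(z,x)}$ together with Lemma~\ref{prop_maxR}. By definition of $\mathbb{P}^{\downarrow}_{(z,x)}$ in \eqref{eq_CDBPnegativo}, for $z>0$, $x<0$,
\[
\mathbb{P}_{(z,x)}\big(Z_t<\infty,\ \overline{\xi}_t<0\big)
= \mathbb{E}_{(z,x)}\big[\mathbf{1}_{\{Z_t<\infty\}}\mathbf{1}_{\{\overline{\xi}_t<0\}}\big]
= \mathbb{P}_{x}^{(e)}\big(\overline{\xi}_t<0\big)\,
\mathbb{E}_{(z,x)}\big[\mathbf{1}_{\{Z_t<\infty\}}\ \big|\ \overline{\xi}_t<0\big],
\]
where I used that $\{Z_t<\infty\}\in\mathcal F_t$ and the remark that $\mathbb{P}_{(z,x)}(Z_t<\infty\mid\xi)$ does not depend on $x$. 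Now I would divide and multiply by $\kappa(1/t,0)$ and use \eqref{eq_lim_M}, namely $\kappa(1/t,0)^{-1}\sqrt{\pi}\,\mathbb{P}_x^{(e)}(\overline{\xi}_t<0)\to U(-x)$ as $t\to\infty$. So it remains to show that $\mathbb{E}_{(z,x)}[\mathbf{1}_{\{Z_t<\infty\}}\mid\overline{\xi}_t<0]$ converges to a strictly positive finite constant $c(z,x)$, and then $c(z,x)$ absorbs the factor $1/\sqrt{\pi}$.

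To handle the conditional expectation I would like to apply the second (process) part of Lemma~\ref{prop_maxR} with $R_t=\mathbf{1}_{\{Z_t<\infty\}}$; this process is $\mathcal F_t$-adapted, uniformly bounded by $1$, and \emph{monotone decreasing} in $t$ (explosion is an absorbing event), hence it converges $\mathbb{P}^{\downarrow}_{(z,x)}$-a.s.\ to $R_\infty:=\mathbf{1}_{\{Z_t<\infty\ \forall t\}}$. Lemma~\ref{prop_maxR} then yields
\[
\lim_{t\to\infty}\mathbb{E}_{(z,x)}\big[\mathbf{1}_{\{Z_t<\infty\}}\ \big|\ \overline{\xi}_t<0\big]
=\mathbb{E}^{\downarrow}_{(z,x)}\big[R_\infty\big]
=\mathbb{P}^{\downarrow}_{(z,x)}\Big(\bigcap_{t\ge0}\{Z_t<\infty\}\Big)
=\lim_{t\to\infty}\mathbb{P}^{\downarrow}_{(z,x)}(Z_t<\infty),
\]
again using monotone convergence for the last equality. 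Setting $c(z,x):=\pi^{-1/2}\lim_{t\to\infty}\mathbb{P}^{\downarrow}_{(z,x)}(Z_t<\infty)$, combining the two displays gives exactly \eqref{eq_cota1}. Finiteness of $c(z,x)$ is immediate since probabilities are bounded by $1$; strict positivity is precisely the content of Lemma~\ref{lem_proba_ext}, which holds under \eqref{eq_spitzer} and \eqref{eq_Hyp_SubH}.

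The main obstacle, and the point that needs care, is the application of the process version of Lemma~\ref{prop_maxR}: one must check that $(R_t)_{t\ge0}$ genuinely converges $\mathbb{P}^{\downarrow}_{(z,x)}$-a.s.\ (not merely in some weaker sense), which follows from monotonicity of $t\mapsto\mathbf{1}_{\{Z_t<\infty\}}$ since $\infty$ is absorbing for $Z$, and that $R_\infty=\lim_t R_t$ is identified correctly so that $\mathbb{E}^{\downarrow}_{(z,x)}[R_\infty]=\lim_t\mathbb{P}^{\downarrow}_{(z,x)}(Z_t<\infty)$; here I invoke monotone (or dominated) convergence under $\mathbb{P}^{\downarrow}_{(z,x)}$. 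A secondary technical point is bookkeeping the normalising constant $\sqrt{\pi}$ from \eqref{eq_lim_M} and the local-time normalisation, but these only affect the value of $c(z,x)$, not its strict positivity or finiteness, so the statement as written — with an unspecified constant $c(z,x)\in(0,\infty)$ — follows cleanly.
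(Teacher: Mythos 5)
Your proposal is correct and follows essentially the same route as the paper: the same factorisation of $\mathbb{P}_{(z,x)}(Z_t<\infty,\ \overline{\xi}_t<0)$ into $\mathbb{P}_x^{(e)}(\overline{\xi}_t<0)$ times a conditional expectation, the same application of the process version of Lemma~\ref{prop_maxR} to $R_t=\mathbf{1}_{\{Z_t<\infty\}}$ (with a.s.\ convergence coming from the events $\{Z_t<\infty\}$ being decreasing), the same use of \eqref{eq_lim_M}, and the same appeal to Lemma~\ref{lem_proba_ext} for strict positivity of the limit. Your bookkeeping of the $\sqrt{\pi}$ factor matches the paper's definition $c(z,x)=\mathbb{E}^{\downarrow}_{(z,x)}[R_\infty]/\sqrt{\pi}$.
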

 			%\lim\limits_{t \to \infty}\frac{1}{\kappa(1/t,0)}\p_{(z,x)}(Z_t<\infty, \underline{-\xi}_t \geq  0) =
 			
 			\begin{proof}
 				We begin by defining  the decreasing sequence of events $A_t=\{Z_t<\infty\}$ for $t\geq 0$,  and also the event $A_\infty= \{\forall t \geq 0,\ Z_t<\infty\}$. Now, observe that
 				$$\lim\limits_{t\to \infty} A_t = A_\infty.$$ 
 				Let $(R_t:=\mathbf{1}_{A_t},\ t\geq 0)$ be a uniformly bounded process adapted to the filtration $(\mathcal{F}_t, t\geq 0)$.  Note that, the process $(R_t, t\geq 0)$ converges $\mathbb{P}^\downarrow_{(z,x)}$-a.s. to a random variable $R_\infty=\mathbf{1}_{A_\infty}$. Then, by appealing to Lemma \ref{prop_maxR}, we have 
 				\begin{equation}\label{eq_limt_R}
 					\lim\limits_{t\to \infty}\mathbb{E}_{(z,x)}\big[R_t\ | \ \overline{\xi}_t < 0\big] = \mathbb{E}^{ \downarrow}_{(z,x)}\big[R_\infty\big].
 				\end{equation}
 				Therefore, by using the asymptotic behaviour of the probability that the L\'evy process $\xi$ remains negative
 				given in \eqref{eq_lim_M}, we get
 				\begin{eqnarray*}
 					\mathbb{P}_{(z,x)}\Big(Z_t<\infty,\ \overline{\xi}_t <  0\Big) &=&  \mathbb{E}_{(z,x)}\big[R_t \ |\  \overline{\xi}_t < 0\big] \mathbb{P}_{x}^{(e)}\big(\overline{\xi}_t <  0\big)\\  &\sim& c(z,x)U(-x)\kappa(1/t,0), \quad \text{as}\quad t \to \infty,
 				\end{eqnarray*}
 				where $c(z,x):= \mathbb{E}_{(z,x)}^{\downarrow}\big[R_\infty\big]/\sqrt{\pi}$. Furthermore,  from Lemma \ref{lem_proba_ext}, we have
 				\begin{equation*}
 					\mathbb{E}_{(z,x)}^{\downarrow}\big[R_\infty\big] = \mathbb{P}_{(z,x)}^\downarrow\big(\forall t \geq 0,\ Z_t<\infty\big) =  \lim\limits_{t\to \infty} \mathbb{P}^{\downarrow}_{(z,x)}(Z_t<\infty) >0,
 				\end{equation*}
 				which completes the first claim.
 			\end{proof}

			%Recall the definition of 
			Denote by $\tau^{-}_x$, the first time that $\xi$ is below $(-\infty, x)$, i.e. $\tau^{-}_x=\inf\{t\ge 0: \xi_t\le x\}$,  for $x<0$. The following result follows directly from the inequality  (4.7) in \cite{bansaye2021extinction} (see the proof of Lemma 4.2), duality, the identity $\widehat{\mathbb{P}}^{(e)}(  \tau^{-}_{w} >t-\epsilon   )=\widehat{\mathbb{P}}^{(e)}_{-w}( \underline{\xi}_{t-\epsilon}>0 )$ and the estimate in \eqref{eq_lim_M}. We present its proof for the sake of completeness. 
 			\begin{lemma}\label{lem_cota_P} 
 				Let $x<0$ and assume that condition \eqref{eq_spitzer} holds. Thus for any $s \leq t$, as $t$ and $s$ goes to $\infty$, we have
 				\begin{equation*}
				\begin{split}
 					\widehat{\mathbb{P}}^{(e)}\big(s<\tau_{x}^-\leq t\big) &\leq \left( C_2\left( \frac{t}{s}\right)^{\eta+ \rho}-1\right) \widehat{\mathbb{P}}^{(e)}\big(\tau_{x}^-> t\big)\\
					&\le C_3\left( C_2\left( \frac{t}{s}\right)^{\eta+ \rho}-1\right) \kappa(1/t,0) \frac{U(-x)}{\sqrt{\pi}},
					\end{split}
 				\end{equation*}
 				where $C_2>0$, $C_3>1$ and $\eta>0$.
 			\end{lemma}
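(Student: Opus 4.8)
The plan is to reduce the two-sided bound to a comparison of the single-variable tail $r\mapsto\widehat{\mathbb{P}}^{(e)}(\tau^-_x>r)$ at the two times $s$ and $t$, and then to exploit its regular variation. Since $\widehat{\mathbb{P}}^{(e)}(s<\tau^-_x\le t)=\widehat{\mathbb{P}}^{(e)}(\tau^-_x>s)-\widehat{\mathbb{P}}^{(e)}(\tau^-_x>t)$, the first asserted inequality is equivalent to
\[
\widehat{\mathbb{P}}^{(e)}(\tau^-_x>s)\le C_2\Big(\tfrac ts\Big)^{\eta+\rho}\,\widehat{\mathbb{P}}^{(e)}(\tau^-_x>t),\qquad s\le t .
\]
First I would record the duality identity used in the statement: under $\widehat{\mathbb{P}}^{(e)}$ the canonical process is the dual $\widehat{\xi}=-\xi$ started at $0$, so $\{\tau^-_x>r\}=\{\underline{\xi}_r>x\}$, and translating the starting point gives $\widehat{\mathbb{P}}^{(e)}(\tau^-_x>r)=\widehat{\mathbb{P}}^{(e)}_{-x}(\underline{\xi}_r>0)$ for all $r\ge0$ and $x<0$ (the $\epsilon$-shift appearing in the quoted form of this identity plays no role for the statement itself and only intervenes when the lemma is applied later).

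Next I would bring in the fluctuation input. Under Spitzer's condition \eqref{eq_spitzer}, the estimate \eqref{eq_lim_M} gives $\widehat{\mathbb{P}}^{(e)}_{-x}(\underline{\xi}_r>0)\sim\frac{U(-x)}{\sqrt\pi}\,\kappa(1/r,0)$ as $r\to\infty$, while by \eqref{eq_k} one has $\kappa(1/r,0)=r^{-\rho}\,\widetilde\ell(1/r)$ with $r\mapsto\widetilde\ell(1/r)$ slowly varying at $\infty$. Therefore, for $s\le t$ both large,
\[
\frac{\widehat{\mathbb{P}}^{(e)}(\tau^-_x>s)}{\widehat{\mathbb{P}}^{(e)}(\tau^-_x>t)}=\big(1+o(1)\big)\,\frac{\kappa(1/s,0)}{\kappa(1/t,0)}=\big(1+o(1)\big)\Big(\tfrac ts\Big)^{\rho}\,\frac{\widetilde\ell(1/s)}{\widetilde\ell(1/t)},
\]
and a Potter bound for the slowly varying function (applied with the free exponent $\eta>0$) controls $\widetilde\ell(1/s)/\widetilde\ell(1/t)$ by a constant multiple of $(t/s)^{\eta}$, uniformly over $s\le t$ beyond some threshold; absorbing the $1+o(1)$ factor into the constant produces $C_2$. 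This is exactly the content of inequality (4.7) in \cite{bansaye2021extinction}, established there within the proof of Lemma 4.2 for the analogous staying-positive (ascending-ladder) formulation, so in the write-up I would simply quote that inequality and transport it to $\tau^-_x$ by the duality identity above, obtaining the first displayed bound of the lemma.

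For the second bound I use \eqref{eq_lim_M} once more: since $\widehat{\mathbb{P}}^{(e)}(\tau^-_x>t)=\widehat{\mathbb{P}}^{(e)}_{-x}(\underline{\xi}_t>0)$ is asymptotically $\frac{U(-x)}{\sqrt\pi}\kappa(1/t,0)$, for any fixed $C_3>1$ there is $t_0$ with $\widehat{\mathbb{P}}^{(e)}(\tau^-_x>t)\le C_3\,\kappa(1/t,0)\,U(-x)/\sqrt\pi$ for all $t\ge t_0$, and substituting this into the first bound yields the stated estimate. The only genuinely delicate point is the uniformity over the pair $s\le t$ (both large) in the comparison of the two tails — extracting a single constant $C_2$ rather than a pointwise asymptotic — which is precisely what the Potter bound, equivalently inequality (4.7) in \cite{bansaye2021extinction}, delivers; everything else (the set identity $\{\tau^-_x>r\}=\{\underline{\xi}_r>x\}$, the shift of the starting point, and keeping straight ``regularly varying at $0^+$ with index $\rho$'' versus ``at $\infty$ with index $-\rho$'') is routine bookkeeping.
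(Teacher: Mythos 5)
Your proposal is correct and follows essentially the same route as the paper's proof: both rewrite $\widehat{\mathbb{P}}^{(e)}(s<\tau^-_x\le t)$ as the difference of the two tails, bound the ratio $\widehat{\mathbb{P}}^{(e)}(\tau^-_x>s)/\widehat{\mathbb{P}}^{(e)}(\tau^-_x>t)$ by $C_2(t/s)^{\eta+\rho}$ using the asymptotic \eqref{eq_lim_M} together with the regular variation of $\kappa(\cdot,0)$ and a Potter bound on the slowly varying factor, and then absorb the asymptotic equivalence of $\widehat{\mathbb{P}}^{(e)}(\tau^-_x>t)$ with $\kappa(1/t,0)U(-x)/\sqrt{\pi}$ into the constant $C_3>1$. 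No gaps.
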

 		\begin{proof}
 						Let $x<0$ and $s\leq t$. We begin by noting
 						\begin{eqnarray}\label{eq_cota_tau}
 							\widehat{\mathbb{P}}^{(e)}\big(s< \tau^-_{x}\leq t\big) &=& \widehat{\mathbb{P}}^{(e)}\big( \tau^-_{x}>s\big)-\widehat{\mathbb{P}}^{(e)}\big( \tau^-_{x}>t\big) \nonumber \\ 
 							&=&   \left(\frac{\widehat{\mathbb{P}}^{(e)}\big( \tau^-_{x}>s\big)}{\widehat{\mathbb{P}}^{(e)}\big( \tau^-_{x}>t\big) }-1 \right)\widehat{\mathbb{P}}^{(e)}\big( \tau^-_{x}>t\big).
 						\end{eqnarray}
 						Now, recall that under Spitzer's condition  \eqref{eq_spitzer}, the function $\kappa(\cdot, 0)$ is regularly varying at $0$ or more precisely,  from \eqref{eq_lim_M}, we have
 						\begin{equation*}
 							\widehat{\mathbb{P}}^{(e)}\big(\tau^-_{x}> t\big) \sim \frac{U(-x)}{\sqrt{\pi}} t^{-\rho}\ell(t), \quad \text{as} \quad t \to \infty,
 						\end{equation*}
 						%	\begin{equation*}
 							%	\widehat{\mathbb{P}}^{(e)}\big(\tau^-_{x}> t\big) \sim \frac{U(-x)}{\sqrt{\pi}}\kappa(1/t,0) \sim \frac{U(-x)}{\sqrt{\pi}} t^{-\rho}\ell(t), \quad \text{as} \quad t \to \infty,
 							%	\end{equation*}
 						where $\ell$ is  the slowly varying function at $\infty$ defined in \eqref{eq_k} as $\ell(t)=\widetilde{\ell}(1/t)$. Hence,  for $t$ and $s$ large enough, we have
 						\begin{equation*}
 							\frac{\widehat{\mathbb{P}}^{(e)}\big( \tau^-_{x}>s\big)}{\widehat{\mathbb{P}}^{(e)}\big( \tau^-_{x}>t\big) } \leq C_1 \left(\frac{s}{t}\right)^{-\rho} \frac{\ell(s)}{\ell(t)},
 						\end{equation*}
 						where $C_1$ is a positive constant. On the other hand, according to Potter's Theorem in \cite{bingham1989regular} we deduce that, for any $A>1$ and $\eta>0$  there exists $t_1=t_1(A,\eta)$ such that
 						\begin{equation*}
 							\frac{\ell(s)}{\ell(t)} \leq A \max\left\{\left(\frac{s}{t}\right)^{\eta}, \left(\frac{s}{t}\right)^{-\eta}\right\}, \quad \quad t\geq s \geq t_1.
 						\end{equation*}
 						Therefore, for $t\geq s \geq t_1$
 						\begin{equation*}
 							\frac{\widehat{\mathbb{P}}^{(e)}\big( \tau^-_{x}>s\big)}{\widehat{\mathbb{P}}^{(e)}\big( \tau^-_{x}>t\big) } \leq  C_2 \left(\frac{s}{t}\right)^{-\rho}\left(\frac{s}{t}\right)^{-\eta}  =  C_2\left( \frac{t}{s}\right)^{\eta+ \rho},
 							%=  C_4 \left(1+ (\delta_1 + \rho)\frac{\eta}{t}\right) = 1+ C_5+C_6\frac{\eta}{t},
 						\end{equation*}
 						%where the constants $C_2> 0$ is adjusted properly
 where $C_2$ is a positive constant.  Now plugging the later inequality back into \eqref{eq_cota_tau}, we get, as it was claimed,
 						\[
 						\begin{split}
 							\widehat{\mathbb{P}}^{(e)}\big(s< \tau^-_{x}\leq t\big) 
 							&\leq  \left(\frac{\widehat{\mathbb{P}}^{(e)}\big( \tau^-_{x}>s\big)}{\widehat{\mathbb{P}}^{(e)}\big( \tau^-_{x}>t\big) }-1 \right)  \widehat{\mathbb{P}}^{(e)}\big( \tau^-_{x}>t\big)
 							%\\ & \leq  \left( C_5+ C_6\frac{\eta}{t}\right) C_7t^{-\rho} l(t) 
 							\\ & \leq  C_3\left( C_2\left( \frac{t}{s}\right)^{\eta+ \rho}-1\right)  \kappa(1/t,0) \frac{U(-x)}{\sqrt{\pi}}, \qquad \text{as}\quad s, t\to \infty,
 						\end{split}
 						\]
 						where $C_3$ is a  constant bigger than 1.
 			\end{proof}
 			
 			Recall that $\texttt{I}_{0,t}(\beta \xi)$ denotes the exponential functional of  the L\'evy process $\beta \xi$ defined in \eqref{eq_expfuncLevy}. %and that $\widetilde{\ell}$ is a slowly varying function at $0+$ which was defined in \eqref{eq_k}			Let us denote by $\ell$ the following slowly varying function at $\infty$,
 			%\begin{equation}\label{eq_ellinfty}
 			%	\ell(t)= \widetilde{\ell}(1/t).
 			%\end{equation} 
 		Our  next result will be useful to control the probability of non-explosion under the event that $\{\overline{\xi}_{t}>0\}$.

 			\begin{lemma}\label{lem_critico_cota_I}
 				Let $\beta \in(-1,0)$, $C<0$ and $y>0$; and  assume that condition \eqref{eq_spitzer} holds. Then, there exists a continuous function $y\mapsto C_{\beta}(y)$ on $(0,\infty)$ such that for $t$ large enough, we have
 				\begin{equation*}
 					\widehat{\mathbb{E}}^{(e)} \left[ \exp\Big\{-y (C\beta)^{-1/\beta} {\normalfont\texttt{I}_{0,t}}(-\beta \xi)^{-1/\beta}\Big\}\right] \leq 2C_{\beta}(y) \kappa(1/t, 0).
					%t^{-\rho}\ell\left(t\right),
 				\end{equation*}
 				%where $\ell$ is the slowly varying function at $\infty$ given in \eqref{eq_ellinfty}. 
				Further, 
 				\begin{equation*}
 					\lim\limits_{y\to \infty}  C_\beta(e^y) =0 \quad \quad \text{and}\quad \quad 	\lim\limits_{y\to \infty} y  C_\beta(e^y) =0.
 				\end{equation*}
 			\end{lemma}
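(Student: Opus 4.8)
The plan is to recognise the left-hand side as the non-explosion probability of a \emph{stable} CBLE in the environment $\xi$, and then to analyse its asymptotics. Since $\beta\in(-1,0)$ we have $-1/\beta=1/|\beta|>0$ and $C\beta>0$, so all the quantities are well defined. As $\widehat{\mathbb{P}}^{(e)}$ is the law of the dual process $\widehat\xi=-\xi$, the variable $\texttt{I}_{0,t}(-\beta\xi)$ has, under $\widehat{\mathbb{P}}^{(e)}$, the same law as $\texttt{I}_{0,t}(\beta\xi)$ under $\mathbb{P}^{(e)}$ (just unfold the definition \eqref{eq_expfuncLevy}); hence by \eqref{eq_noexplosion}
\[
\widehat{\mathbb{E}}^{(e)}\!\left[\exp\Big\{-y(C\beta)^{-1/\beta}\texttt{I}_{0,t}(-\beta\xi)^{-1/\beta}\Big\}\right]
=\mathbb{E}^{(e)}\!\left[\exp\Big\{-y\big(\beta C\,\texttt{I}_{0,t}(\beta\xi)\big)^{-1/\beta}\Big\}\right]
=\mathbb{P}_y\big(\widetilde Z_t<\infty\big),
\]
where $\widetilde Z$ is the stable CBLE with pure branching mechanism $\widetilde\psi_0(\lambda)=C\lambda^{1+\beta}$ started from $y$.

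Next I would check that $\widetilde Z$ lies within the scope of the critical-explosive regime: \eqref{nogrey} holds because $1+\beta\in(0,1)$; \eqref{eq_spitzer} holds by hypothesis; \eqref{eq_Hyp_SubH} holds since the L\'evy measure of $\widetilde\psi_0$ is a constant multiple of $z^{-2-\beta}\mathrm{d}z$, whence $\int_{0^+}z\ln^2(z)\widetilde\mu(\mathrm{d}z)<\infty$; and \eqref{eq_Hyp} holds with equality. Consequently there is a finite positive number $C_\beta(y)$ with $\kappa(1/t,0)^{-1}\mathbb{P}_y(\widetilde Z_t<\infty)\to C_\beta(y)$ as $t\to\infty$; this can be taken from Proposition~2.1 in \cite{palau2016asymptotic} in the critical case, or, more robustly, obtained by the same splitting on $\{\overline\xi_t<0\}$ and $\{\overline\xi_t\geq0\}$ that underlies Section~\ref{sec_explosiontheoremrates}, which for the \emph{stable} mechanism can be closed using only the exact formula \eqref{eq_noexplosion}, \eqref{eq_lim_M}, Lemma~\ref{prop_maxR} and a first-passage decomposition of $\xi$, and therefore does not appeal to the present lemma. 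The factor-$2$ bound then follows at once from $\kappa(1/t,0)^{-1}\mathbb{P}_y(\widetilde Z_t<\infty)\to C_\beta(y)<\infty$.

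The same argument yields a representation of the limit of the form $C_\beta(y)=\tfrac{1}{\sqrt\pi}\,\mathbb{E}^{(e),\downarrow}\!\big[\exp\{-y(\beta C)^{-1/\beta}\texttt{I}_{0,\infty}(\beta\xi)^{-1/\beta}\}\big]$, where $\mathbb{P}^{(e),\downarrow}$ is the law of $\xi$ started from $0^-$ and conditioned to stay negative (the precise form may involve an extra integration against the ascending renewal measure, which is immaterial here). By the stable instance of Lemma~\ref{lem_proba_ext} the exponential functional $\texttt{I}_{0,\infty}(\beta\xi)=\int_0^\infty e^{|\beta|\xi_s}\mathrm{d}s$ is a.s.\ finite and strictly positive under $\mathbb{P}^{(e),\downarrow}$. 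Continuity of $y\mapsto C_\beta(y)$ then follows by dominated convergence, $u\mapsto e^{-y(\beta C)^{-1/\beta}u}$ being continuous in $y$ and bounded by $1$; and since this integrand tends to $0$ as $y\to\infty$ for every fixed positive value of the functional, dominated convergence also gives $C_\beta(e^y)\to0$.

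The remaining assertion, $yC_\beta(e^y)\to0$, is the delicate point. Writing $a=(\beta C)^{-1/\beta}$ and $W=\texttt{I}_{0,\infty}(\beta\xi)^{-1/\beta}$, one has $y\,\mathbb{E}^{(e),\downarrow}[e^{-ae^y W}]\to0$ provided $\sup_{y\geq0}y\,e^{-ae^y w}$ — which is of order $\log(1/w)$ as $w\downarrow0$ — is $\mathbb{P}^{(e),\downarrow}$-integrable, i.e.\ provided $\big(\log\texttt{I}_{0,\infty}(\beta\xi)^{-1}\big)^+\in L^1(\mathbb{P}^{(e),\downarrow})$, which amounts to a small-value estimate of the type $\mathbb{P}^{(e),\downarrow}\big(\texttt{I}_{0,\infty}(\beta\xi)\leq\varepsilon\big)=o\big(1/\log^{1+\eta}(1/\varepsilon)\big)$ as $\varepsilon\downarrow0$, for some $\eta>0$. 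This I would derive from the crude lower bound $\texttt{I}_{0,\infty}(\beta\xi)\geq e^{-|\beta|}\,\tau$, with $\tau$ the first entrance time of $\xi$ into $(-\infty,-1)$, together with a short-time estimate for $\mathbb{P}^{(e),\downarrow}(\tau\leq\delta)$ as $\delta\downarrow0$; obtaining the latter with a good enough rate but without extra moment assumptions on $\xi$ (beyond $\int_{\mathbb{R}}(1\wedge z^2)\pi(\mathrm{d}z)<\infty$), typically by first stripping off the large negative jumps of $\xi$, which occur at a finite rate, is the main obstacle of the proof. A self-contained alternative that bypasses the identification with $\widetilde Z$ is a recursive first-passage bound on $\texttt{I}_{0,t}(-\beta\xi)$ in the spirit of Lemma~4.2 in \cite{bansaye2021extinction}; in either route the crux is the same quantitative control of the exponential functional under the conditioned environment.
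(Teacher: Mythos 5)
Your identification of the left-hand side with the non-explosion probability \eqref{eq_noexplosion} of a stable CBLE is correct, but the route you then take is not the paper's and, more importantly, it has two concrete gaps. The paper never passes through the branching process: it applies Theorem 2.20 of Patie and Savov \cite{patie2018bernstein}, which states that under Spitzer's condition, for any $a\in(0,1)$ and any bounded continuous $f$ on $\mathbb{R}^+$,
\[
\widehat{\mathbb{E}}^{(e)}\Big[\texttt{I}_{0,t}(-\beta\xi)^{-a}f\big(\texttt{I}_{0,t}(-\beta\xi)\big)\Big]\sim \kappa(1/t,0)\int_0^\infty f(x)\,\vartheta_a(\mathrm{d}x),\qquad t\to\infty,
\]
with $\vartheta_a$ a (finite, take $f\equiv 1$) measure on $(0,\infty)$. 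Choosing $f(x)=x^a\exp\{-y(C\beta)^{-1/\beta}x^{-1/\beta}\}$, which is bounded and continuous because $-1/\beta>0$, gives in one stroke the existence of the limit $C_\beta(y)=\int_0^\infty x^a e^{-y(C\beta)^{-1/\beta}x^{-1/\beta}}\vartheta_a(\mathrm{d}x)$ (hence the factor-$2$ bound for large $t$), its continuity in $y$, and both limits as $y\to\infty$ by dominated convergence, since $x^a\sup_{y\ge1}\,y\exp\{-e^y(C\beta)^{-1/\beta}x^{-1/\beta}\}$ is a bounded function of $x$ and $\vartheta_a$ is finite.

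The first gap in your proposal is the circularity you acknowledge but do not resolve: the "splitting on $\{\overline\xi_t<0\}$ and $\{\overline\xi_t\ge0\}$" of Section \ref{sec_explosiontheoremrates} uses the present lemma precisely to bound the unconditioned expectation $\widehat{\mathbb{E}}^{(e)}[\exp\{-c\,\texttt{I}_{0,\cdot}(-\beta\xi)^{-1/\beta}\}]$ by a multiple of $\kappa(1/t,0)$ after the first-passage decomposition (see the proof of Proposition \ref{prop_cota0}); that expectation \emph{is} the left-hand side of the lemma, so the decomposition alone cannot close the argument, and citing Proposition 2.1 of \cite{palau2016asymptotic} would at best give the limit for each fixed $y$, not its continuity in $y$ nor the explicit $y$-dependence needed afterwards. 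The second and decisive gap is the one you flag yourself: the assertion $yC_\beta(e^y)\to0$ is reduced to a quantitative small-value estimate for the perpetual exponential functional under the conditioned law (and your representation of $C_\beta$ via $\mathbb{P}^{(e),\downarrow}$ is itself only conjectural, "up to an extra integration against the renewal measure"), and this estimate is not proved. That is exactly the content the Patie--Savov representation supplies for free, so as written the proposal establishes neither the continuity claim in full nor the second limit.
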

 			%$C_\beta(y)\to 0$ as $y\to \infty$.
 			%$f(x)=\exp(-\widetilde{\alpha}  x^{-1/\beta})x^{1/\beta}$, where
 			\begin{proof}
 				First, we recall from  Patie and Savov \cite[ Theorem 2.20]{patie2018bernstein} that, under  Spitzer's condition \eqref{eq_spitzer}, for any continuous and bounded function  $f$  on $\mathbb{R}^+$ and any constant $a\in (0,1)$, we have
 				\begin{equation*}
 					\lim\limits_{t\to \infty} \frac{	\widehat{\mathbb{E}}^{(e)}\Big[ \texttt{I}_{0,t}(-\beta \xi)^{-a}f\big(\texttt{I}_{0,t}(-\beta \xi)\big)\Big]}{\kappa(1/t,0)} = \int_{0}^{\infty} f(x)\vartheta_a(\mathrm{d}x),
 				\end{equation*}
 				where $\vartheta_a$ is a positive measure on $(0,\infty)$. 
				
				For our purposes, we use  $f(x)=x^a \exp(-y (C\beta)^{-1/\beta}x^{-1/\beta})$ which is bounded and continuous. Thus by the latter identity we deduce that, for any $a\in (0,1)$, 
 				\begin{equation*}
 					\lim\limits_{t\to \infty} \frac{	\widehat{\mathbb{E}}^{(e)}\left[\exp\Big\{-y (C\beta)^{-1/\beta} {\normalfont\texttt{I}_{0,t}}(-\beta \xi)^{-1/\beta}\Big\}\right]}{\kappa(1/t,0)}= C_\beta(y),
 				\end{equation*}
 				where
 				\begin{equation}\label{cont_cbeta}
 					C_\beta(y):= \int_{0}^{\infty} x^a \exp\big\{-y (C\beta)^{-1/\beta}x^{-1/\beta}\big\}\vartheta_a(\mathrm{d}x),
 				\end{equation}
				which is clearly continuous.
 				%On the other hand, recall that \eqref{eq_k} holds under Spitzer's condition, that is 
 				%\begin{equation*}
 					%\kappa(1/t,0) =  t^{-\rho} \ell(t),
 				%\end{equation*}
 				%where $\ell$ is  defined in \eqref{eq_ellinfty}. 
				The latter implies that there exists $t_0>0$ such that if  $t\geq t_0$
 				\begin{equation*}
 					\widehat{\mathbb{E}}^{(e)}\left[\exp\Big\{-y (C\beta)^{-1/\beta} {\normalfont\texttt{I}_{0,t}}(-\beta \xi)^{-1/\beta}\Big\}\right] \leq 2 C_{\beta}(y) \kappa(1/t, 0),
 				\end{equation*}
				as expected. Furthermore, with the help of the Dominated Convergence Theorem, we obtain 
 				\[\begin{split}
 					\lim\limits_{y\to\infty} y C_\beta(e^y) = 2\int_{0}^{\infty} x^a\lim\limits_{y\to\infty} y \exp\big\{-e^y (C\beta)^{-1/\beta}x^{-1/\beta}\big\}\vartheta_a(\mathrm{d}x) =0
 				\end{split}\]
 				and  that $C_{\beta}(e^y)\to 0$, as $y\to \infty$. This concludes the proof.
 			\end{proof}
 			
 			%The following results claims that the speed of explosion only depend on the charge of probability on sample paths of the L\'evy process whose running  supremum is negative. 

 			The following result makes precise the statement that only paths of the L\'evy process  with a very low  running  supremum give a substantial contribution to the speed of non-explosion. 
 			
 			\begin{proposition}\label{prop_cota0}
 				Fix   $z>0$, $x<0$ and $\epsilon \in (0,1)$; and suppose  that assumptions \eqref{eq_spitzer} and \eqref{eq_Hyp} are satisfied. Then, we have for $y>x$
 				%	\nat{\begin{equation}\label{eq_cota2}
 						%		\lim\limits_{t\to \infty}  \frac{1}{\kappa(1/t, 0)} \widehat{\p}_{(z,-x)}(Z_t<\infty, \underline{\xi}_t <0) = \lim\limits_{t\to \infty}  \frac{t^\rho}{l(1/t)} \widehat{\p}_{(z,-x)}(Z_t<\infty, \underline{\xi}_t <0) = 0.
 						%		\end{equation}}

 				%	\lim\limits_{x\to \infty} 	\limsup_{t\to \infty}  \frac{1}{\kappa(1/t, 0)} \p_{(z,x)}(Z_t<\infty, \underline{-\xi}_{t-\delta} <0) =  
 				\begin{equation}\label{eq_cota2}
 					\lim\limits_{y\to \infty} \limsup_{t\to \infty}  \frac{1}{\kappa(1/t,0)} \mathbb{P}_{(z,x)}\Big(Z_t<\infty, \ \overline{\xi}_{t-\epsilon} \geq y\Big) = 0.
 				\end{equation}
 				%	\lim\limits_{x\to \infty} \limsup_{t\to \infty}  \frac{t^\rho}{l(t)} \p_{(z,x)}(Z_t<\infty, \overline{\xi}_{t-\epsilon} >0)
 				%	where $l$ is slowly varying function at $\infty$.
 			\end{proposition}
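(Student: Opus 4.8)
The plan is to combine the quenched formula \eqref{eq_exploproba} with a restart of the environment at its first passage above level $y$. First I would reduce to a statement about the auxiliary functional. Since $\mathbb{P}_{(z,x)}(Z_t<\infty)$ does not depend on the starting point of the environment (as in the proof of Theorem \ref{teo_explosubcritica}), writing $\eta=\xi-x$ — which starts from $0$ and has, under $\mathbb{P}_{(z,x)}$, the law of $\xi$ under $\mathbb{P}^{(e)}$ — and setting $y'=y-x>0$, the event $\{\overline\xi_{t-\epsilon}\ge y\}$ becomes $\{T\le t-\epsilon\}$ with $T:=\inf\{s\ge0:\eta_s\ge y'\}$, and $\eta_T\ge y'$ on it. By \eqref{eq_exploproba} it then suffices to show that $\kappa(1/t,0)^{-1}\,\mathbb{E}^{(e)}\big[\mathbf 1_{\{T\le t-\epsilon\}}\exp\{-z\,v_t(0,0,\eta)\}\big]$ has a $\limsup_{t\to\infty}$ that vanishes as $y'\to\infty$.

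The heart of the matter is a lower bound for $v_t(0,0,\eta)$ on $\{T\le t-\epsilon\}$. Because $s\mapsto v_t(s,\lambda,\eta)$ is non‑increasing (Lemma \ref{prop_monotonia}), letting $\lambda\downarrow0$ in $v_t(0,\lambda,\eta)\ge v_t(T,\lambda,\eta)$ gives $v_t(0,0,\eta)\ge v_t(T,0,\eta)$; reparametrising the backward equation \eqref{eq_BDEv} on $[T,t]$ and rescaling by $e^{-\eta_T}$ yields the cocycle/scaling identity $v_t(T,0,\eta)=e^{\eta_T}v_{t-T}(0,0,\eta^{(T)})$ with $\eta^{(T)}_u:=\eta_{T+u}-\eta_T$, so $v_t(0,0,\eta)\ge e^{y'}v_{t-T}(0,0,\eta^{(T)})$ on $\{T\le t-\epsilon\}$. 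Here is where assumption \eqref{eq_Hyp} enters: an ODE comparison between \eqref{eq_BDEv} and the backward equation driven by the stable mechanism $C\lambda^{1+\beta}$ (whose functional is explicit, cf. \eqref{eq_noexplosion}) bounds $v_{t-T}(0,0,\eta^{(T)})$ below by the stable functional $\big(\beta C\,\texttt I_{0,t-T}(\beta\eta^{(T)})\big)^{-1/\beta}$, so that on $\{T\le t-\epsilon\}$
\[
\exp\{-z\,v_t(0,0,\eta)\}\ \le\ \exp\Big\{-z\,e^{y'}\big(\beta C\,\texttt I_{0,t-T}(\beta\eta^{(T)})\big)^{-1/\beta}\Big\}.
\]

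Next I would use the strong Markov property of $\eta$ at $T$: $\eta^{(T)}$ is a copy of $\eta$ independent of $\mathcal F^{(e)}_T$, and $t-T$ is $\mathcal F^{(e)}_T$‑measurable, so conditioning on $\mathcal F^{(e)}_T$ turns the right‑hand side into $\mathbb{E}^{(e)}[\mathbf 1_{\{T\le t-\epsilon\}}\Psi_{y'}(t-T)]$, where $\Psi_{y'}(s):=\mathbb{E}^{(e)}[\exp\{-z e^{y'}(\beta C\,\texttt I_{0,s}(\beta\xi))^{-1/\beta}\}]=\mathbb P_{z e^{y'}}(\widetilde Z_s<\infty)$ for the stable CBLE $\widetilde Z$ of parameters $(\beta,C)$; by duality (as in the computation preceding Lemma \ref{lem_critico_cota_I}), $\Psi_{y'}(s)=\widehat{\mathbb E}^{(e)}[\exp\{-z e^{y'}(C\beta)^{-1/\beta}\texttt I_{0,s}(-\beta\xi)^{-1/\beta}\}]$. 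Splitting $\{T\le t-\epsilon\}$ into $\{T\le t/2\}$ and $\{t/2<T\le t-\epsilon\}$ and using that $\Psi_{y'}$ is non‑increasing: on the first event $t-T\ge t/2$, so Lemma \ref{lem_critico_cota_I} applied with parameter $z e^{y'}$ gives $\Psi_{y'}(t-T)\le\Psi_{y'}(t/2)\le 2C_\beta(z e^{y'})\kappa(2/t,0)$ for $t$ large; on the second, $t-T\ge\epsilon$ gives $\Psi_{y'}(t-T)\le\Psi_{y'}(\epsilon)$ while $\mathbb P^{(e)}(T>t/2)=\mathbb P^{(e)}_{-y'}(\overline\xi_{t/2}<0)\sim \pi^{-1/2}U(y')\kappa(2/t,0)$ by \eqref{eq_lim_M}. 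Since $\kappa(2/t,0)\sim 2^{\rho}\kappa(1/t,0)$, this yields $\limsup_{t\to\infty}\kappa(1/t,0)^{-1}\mathbb{P}_{(z,x)}(Z_t<\infty,\overline\xi_{t-\epsilon}\ge y)\le 2^{\rho}\big(2C_\beta(z e^{y'})+2\pi^{-1/2}U(y')\Psi_{y'}(\epsilon)\big)$, and letting $y\to\infty$ (so $y'\to\infty$) the first term vanishes because $C_\beta(\cdot)\to0$ at infinity (Lemma \ref{lem_critico_cota_I}), while for the second one uses $U(y')\le C_1 y'$ together with $y'\,\mathbb P_{z e^{y'}}(\widetilde Z_\epsilon<\infty)\to0$ — a small‑time analogue of the property $\lim_y yC_\beta(e^y)=0$, which follows by controlling the left tail of $\texttt I_{0,\epsilon}(\beta\xi)$ (equivalently of $\inf_{s\le\epsilon}\xi_s$); this gives \eqref{eq_cota2}.

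The two steps I expect to be most delicate are: \emph{(i)} the comparison step invoking \eqref{eq_Hyp}, since \eqref{eq_BDEv} is a backward equation one must check carefully that the stable functional really bounds $v_{t-T}(0,0,\eta^{(T)})$ from below (and that this is the correct direction in which \eqref{eq_Hyp} dominates the quenched non‑explosion of $Z$ by that of a stable CBLE which itself explodes with positive probability); and \emph{(ii)} the contribution of $\{t/2<T\le t-\epsilon\}$, which is \emph{not} negligible merely because $\mathbb P^{(e)}(T>t/2)=O(\kappa(1/t,0))$ — it genuinely requires $\Psi_{y'}(\epsilon)$ to decay faster than $1/U(y')$, i.e. a quantitative small‑time estimate for the non‑explosion probability of the stable CBLE started from a large population.
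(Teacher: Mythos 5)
Your proposal follows essentially the same route as the paper's proof: condition \eqref{eq_Hyp} turns the backward equation \eqref{eq_BDEv} into the lower bound $v_t(0,0,\xi-x)\geq\big(C\beta\,\texttt{I}_{0,t}(\beta(\xi-x))\big)^{-1/\beta}$ (the paper's \eqref{eq_cota_v_I}), the event $\{\overline{\xi}_{t-\epsilon}\geq y\}$ is rewritten as a first-passage event, which is split according to whether the passage occurs before or after roughly $t/2$, the early-passage piece is controlled by the strong Markov property plus Lemma \ref{lem_critico_cota_I} (picking up the factor $e^{y'}$ from $\eta_T\geq y'$, exactly as the paper picks up $e^{-w}$ from $\xi_{\tau^-_w}\leq w$), the late-passage piece is controlled by the passage probability of order $U(y')\kappa(1/t,0)$ times a fixed-time-$\epsilon$ functional, and the conclusion uses $U(y')\leq C_1y'$ together with the decay of $C_\beta$. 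Your cocycle identity $v_t(T,0,\eta)=e^{\eta_T}v_{t-T}(0,0,\eta^{(T)})$ is correct and is a cleaner way to organise the restart than the paper's direct manipulation of $\texttt{I}_{0,t}$, and the comparison direction in \eqref{eq_Hyp} is the right one. The single point of divergence is the fixed-$\epsilon$ factor in the late-passage term: the paper bounds $\widehat{\mathbb{E}}^{(e)}[\exp\{-ze^{y'}(C\beta)^{-1/\beta}\texttt{I}_{0,\epsilon}(-\beta\xi)^{-1/\beta}\}]$ by $2C_\beta(ze^{y'})\kappa(1/\epsilon,0)$ via Lemma \ref{lem_critico_cota_I} and then uses $\lim_u uC_\beta(e^u)=0$, whereas you propose a direct small-time estimate $y'\Psi_{y'}(\epsilon)\to0$. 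Your route can be made rigorous, but the parenthetical reduction to the left tail of $\inf_{s\le\epsilon}\xi_s$ is too crude: $\mathbb{P}(\inf_{s\leq\epsilon}\xi_s\leq-u)$ can decay as slowly as the tail of the environment's L\'evy measure (e.g.\ like $u^{-1/2}$), which is not $o(1/u)$, and neither \eqref{eq_spitzer} nor \eqref{eq_Hyp_SubH} rules this out. What saves the argument is the sharper observation that $\texttt{I}_{0,\epsilon}(\beta\xi)\geq r\,e^{|\beta|\underline{\xi}_r}$ for \emph{every} $r\leq\epsilon$, so $\{\texttt{I}_{0,\epsilon}(\beta\xi)\leq\delta\}$ forces the infimum over a window of length $O(\sqrt{\delta})$ to lie below $O(\log\delta)$; combining the jump count on that shrinking window with the exponential Markov inequality for the small-jump martingale gives $\mathbb{P}(\texttt{I}_{0,\epsilon}(\beta\xi)\leq\delta)=O(\delta^{c})$, whence $\Psi_{y'}(\epsilon)$ decays exponentially in $y'$ and $U(y')\Psi_{y'}(\epsilon)\to0$. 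With that sharpening your step (ii) closes, and the rest of the argument is the paper's.
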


 			\begin{proof} 
 				Fix $z>0, \ x<0$ and $\epsilon\in (0,1)$. We begin by noting that condition \eqref{eq_Hyp} allows us to find a lower bound for $v_t(0,0,\xi-x)$ in terms of the exponential functional of $\xi$. Indeed, we observe from the backward differential equation given in \eqref{eq_BDEv} that 
 	\begin{equation*}
 					\frac{\partial }{\partial s} v_t(s,\lambda e^{-x},\xi-x) \geq C v_t^{1+\beta}(s,\lambda  e^{-x},\xi-x)e^{-\beta (\xi_s-x)}, \quad v_t(t,\lambda e^{-x}, \xi-x)=\lambda e^{-x}.
 				\end{equation*}
 				Integrating between 0 and $t$, we get
 				\begin{equation*}
 					\frac{1}{v_t^\beta(0,\lambda e^{-x},\xi-x)} -\frac{1}{(\lambda e^{-x})^\beta} \leq C\beta \int_{0}^{t} e^{-\beta (\xi_s-x)}\mathrm{d} s, \quad C\beta >0.
 				\end{equation*}
 				Now, letting $\lambda\downarrow 0$ and taking into account that  $\beta\in(-1,0)$ and $C<0$, we deduce the following inequality  for all $t\geq 0$,
 				\begin{equation}\label{eq_cota_v_I}
 					v_t(0,0,\xi-x)\geq \big(C\beta \texttt{I}_{0,t}(\beta (\xi-x))  \big)^{-1/\beta},
 				\end{equation}
 				where $\texttt{I}_{0,t}(\beta (\xi-x))$ is the exponential functional of  the L\'evy process $\beta (\xi-x)$. Hence, the quenched non-explosion probability given in \eqref{eq_exploproba} may be bounded in terms of this functional. That is to say, for all $t >0$, 
 				\begin{equation*}
 					\mathbb{P}_{(z,x)}\big(Z_t<\infty \ \big|\big. \ \xi \big)  = \exp\big\{-z v_t(0,0, \xi - x)\big\}\leq \exp\Big\{-z \big(C\beta \texttt{I}_{0,t}(\beta (\xi -x))  \big)^{-1/\beta}\Big\}.
 				\end{equation*}
 				Therefore conditioning on the environment, we obtain that for any $y>x$, 
 				\begin{eqnarray}\label{eq_cotaI}
 					&&\mathbb{P}_{(z,x)}\Big(Z_t<\infty,\   \overline{\xi}_{t-\epsilon} \geq y\Big)  = \mathbb{E}_{x}^{(e)}\Big[	\mathbb{P}_{(z,x)}\big(Z_t<\infty \ \big|\big.  \ \xi \big) \mathbf{1}_{\{\overline{\xi}_{t-\epsilon} \geq y\}}\Big] \\ &  & \hspace{4.8cm}\leq  \widehat{\mathbb{E}}^{(e)}\Big[\exp\Big\{-z (C\beta)^{-1/\beta} \texttt{I}_{0,t}(-\beta \xi)^{-1/\beta}\Big\} \mathbf{1}_{\{\underline{\xi}_{t-\epsilon} \leq x-y\}}\Big] \nonumber.
 				\end{eqnarray}
 				%	\begin{eqnarray*}
 					%		\p_{(z,x)}\Big(Z_t<\infty,\  \overline{\xi}_{t-\epsilon} >y\Big)  &=& \e_{x}^{(e)}\Big[	\p_{(z,x)}\big(Z_t<\infty \ \big|\big.  \ \xi \big) \mathbf{1}_{\{\overline{\xi}_{t-\epsilon} >y\}}\Big]\\ & \leq &  \e^{(e)}_x\Big[ \exp\Big\{-z \left(C\beta \texttt{I}_{0,t}(\beta (\xi -x))  \right)^{-1/\beta}\Big\}\mathbf{1}_{\{\underline{-\xi}_{t-\epsilon} < -y\}}\Big] \\ & =& \widehat{\e}^{(e)}\Big[\exp\Big\{-z (C\beta)^{-1/\beta} \texttt{I}_{0,t}(-\beta \xi)^{-1/\beta}\Big\} \mathbf{1}_{\{\underline{\xi}_{t-\epsilon} <x-y\}}\Big].
 					%	\end{eqnarray*}
 				%Denote, for $z>0$, 
 				%$$ F_{0,t}(z)= \exp\Big(-z (C\beta)^{-1/\beta} \texttt{I}_{0,t}(-\beta \xi)^{-1/\beta}\Big), \quad \quad t \geq 0.$$
 				%	\begin{eqnarray*}
 					%			\p_{(z,x)}(Z_t<\infty, \overline{\xi}_{t-\epsilon} >0)  &=& \e_{(z,x)}[	\p_{(z,x)}(Z_t<\infty \ | \ \xi ) \mathbf{1}_{\{\overline{\xi}_{t-\epsilon} >0\}}]\\ & \leq &  \e_{x}^{(e)}[1 \wedge \exp(-\alpha \texttt{I}_{0,t}(\beta \xi)^{-1/\beta}) \mathbf{1}_{\{\underline{-\xi}_{t-\delta} <0\}}] \\ & =&  \widehat{\e}_{-x}^{(e)}[1 \wedge \exp(-\alpha \texttt{I}_{0,t}(\widetilde{\beta}\xi)^{-1/\beta}) \mathbf{1}_{\{\underline{\xi}_{t-\epsilon} <0\}}],
 					%		\end{eqnarray*}
 				%	where  $\widetilde{\beta}:=-\beta \in (0,1)$. 
 				%	\begin{eqnarray}
 					%	\widehat{\e}^{(e)}[1\wedge\exp(-\alpha \texttt{I}_{0,t}(\widetilde{\beta} \xi)^{-1/\beta});\ \tau_{x}^-<t]
 					%	\end{eqnarray}
 				Let  $w=x-y$ and $t_0>0$.	Now, we split the event \ $\{\tau^{-}_{w}\leq t-\epsilon\}$\   for \  $3t_0 < t$ and \ $0< \epsilon < 1$,  as follows
 				\begin{equation*}
 					\{\tau^{-}_{w}\leq t-\epsilon\}= \{0 < \tau^{-}_{w} \leq (t-t_0)/2 \} \cup \{(t-t_0)/2 < \tau^{-}_{w} \leq t-\epsilon \}.
 				\end{equation*}
 				%	\begin{equation}\label{eq_cotaI}
 					%	\begin{split}
 						%	\widehat{\mathbb{E}}^{(e)}&\Big[\exp\Big\{-z(C\beta)^{-1/\beta} {\normalfont\texttt{I}_{0,t}}(-\beta \xi)^{-1/\beta}\Big\}; \  \tau^{-}_{w}\leq t-\epsilon\Big] = \\ &
 						%	  \widehat{\mathbb{E}}^{(e)}\Big[\exp\Big\{-z (C\beta)^{-1/\beta} {\normalfont\texttt{I}_{0,t}}(-\beta \xi)^{-1/\beta}\Big\}; \  0 < \tau^{-}_{w} \leq (t-t_0)/2 \Big]  \\ & 
 						%\hspace{1cm}+ \  \widehat{\mathbb{E}}^{(e)}\Big[ \exp\Big\{-z(C\beta)^{-1/\beta} {\normalfont\texttt{I}_{0,t}}(-\beta \xi)^{-1/\beta}\Big\}; \  (t-t_0)/2 < \tau^{-}_{w} \leq t-\epsilon \Big].
 						%	\end{split}
 					%	\end{equation}
 				By the monotonicity of  the mapping $t\mapsto \texttt{I}_{0,t}(-\beta \xi)$, we have that, under the event $\{0 <\tau^{-}_{w} \leq  (t-t_0)/2 \}$, the following inequalities hold
 			%	\[
 			%	t_0 <\tau^{-}_{w} < \tau^{-}_{w} + \frac{t+t_0}{2}  \leq t \qquad \textrm{and} \qquad \texttt{I}_{0,t}(-\beta\xi) \geq \texttt{I}_{\tau^{-}_{w}, \tau^{-}_{w}+ \frac{t+t_0}{2}}(-\beta \xi). 
 			%	\]
 				\[
 					0 <\tau^{-}_{w}  < \tau^{-}_{w} + \frac{t+t_0}{2}  \leq t \qquad \textrm{and} \qquad  \int_{0}^{t} e^{\beta \xi_s} \mathrm{d} s \geq  \int_{\tau^{-}_{w}}^{\tau^{-}_{w}+ \frac{t+t_0}{2}} e^{\beta \xi_s} \mathrm{d} s. 
 					\]
 				Similarly, under the event $\{(t-t_0)/2 < \tau^{-}_{w} \leq t-\epsilon\}$, we obtain
 				\[
 				\frac{t-t_0}{2} < \tau^{-}_{w} < \tau^{-}_{w} + \epsilon \leq t \qquad \textrm{and} \qquad \int_{0}^{t} e^{\beta \xi_s}  \ge \int_{\tau^{-}_{w}}^{\tau^{-}_{w}+\epsilon} e^{\beta \xi_s}.
 		%		 \texttt{I}_{0,t}(-\beta \xi) \geq \texttt{I}_{\tau^{-}_{w}, \tau^{-}_{w}+\epsilon}(-\beta \xi). 
 				\]
 				%Therefore
 				%\begin{equation}
 				%\begin{split}\label{eq_weakly_cota0}
 				%\mathbb{E}^{(e)}\Big[1 \wedge {\tt I}_{0,t}(\beta \xi)^{-1/\beta}; \tau^{-}_{-\tilde{y}}\leq t-\epsilon\Big]     & \leq   \sum_{i=1}^{\lfloor t \rfloor - 2} \mathbb{E}^{(e)}\left[\Big({\tt I}_{\tau^{-}_{-\tilde{y}}, \tau^{-}_{-\tilde{y}}+t-i}(\beta \xi)\Big)^{-1/\beta}; i-1 < \tau^{-}_{-\tilde{y}}\leq i \right] \\ &  + \mathbb{E}^{(e)}\left[\Big({\tt I}_{\tau^{-}_{-\tilde{y}}, \tau^{-}_{-\tilde{y}}+\epsilon}(\beta \xi)\Big)^{-1/\beta}; \lfloor t \rfloor -2 < \tau^{-}_{-\tilde{y}}\leq t - \epsilon \right]. 
 				%\end{split}
 				%\end{equation}
 				Next,  appealing to the strong Markov property  of $\xi$, we deduce 
 				\[
 				\begin{split}
 					\widehat{\mathbb{E}}^{(e)}&\left[	\exp\Big\{-z(C\beta)^{-1/\beta} 	\texttt{I}_{\tau^{-}_{w}, \tau^{-}_{w}+ \frac{t+t_0}{2}}(-\beta \xi)^{-1/\beta}\Big\};\ 0 < \tau^{-}_{w}\leq  (t-t_0)/2  \right] \\  
 					& \le   \widehat{\mathbb{E}}^{(e)} \left[\exp\left\{- ze^{-w}(C\beta)^{-1/\beta}\left(\int_{0}^{\frac{t+t_0}{2}} e^{\beta \big(\xi_{s+\tau^{-}_{w}} - \xi_{\tau^{-}_{w}}\big)}  \mathrm{d} s\right)^{-1/\beta}\right\}\mathbf{1}_{\{0 < \tau^{-}_{w}\leq  (t-t_0)/2\} }  \right]\\
 		&\le \widehat{\mathbb{E}}^{(e)} \left[\exp\Big\{-ze^{-w}(C\beta)^{-1/\beta} 	\texttt{I}_{0,\frac{t+t_0}{2}}(-\beta \xi)^{-1/\beta}\Big\}\right].
 				\end{split}
 				\]
 				Thus from  Lemma \ref{lem_critico_cota_I},  for $t$ sufficiently large, we have
 				\begin{equation*}
 					\widehat{\mathbb{E}}^{(e)} \left[\exp\Big\{-ze^{-w}(C\beta)^{-1/\beta} 	\texttt{I}_{0,\frac{t+t_0}{2}}(-\beta \xi)^{-1/\beta}\Big\}\right] \leq 2C_{\beta}(ze^{-w}) \kappa\left(\frac{2}{t+t_0}, 0\right),
					%\left(\frac{t+t_0}{2}\right)^{-\rho}\ell\left(\frac{t+t_0}{2}\right),
 				\end{equation*}
 				where %$\ell$ and 
				$C_\beta(ze^{-w})$ are  defined as in %\eqref{eq_ellinfty} and  
				\eqref{cont_cbeta}.
				%, respectively.  
				
				 				Using the same arguments as above and  Lemmas \ref{lem_cota_P} and \ref{lem_critico_cota_I}, we obtain the following sequence of inequalities for $t$ sufficiently large, 
 				\begin{equation*}
 					\begin{split}\label{eq_weakly_cota2}
 						&\widehat{\mathbb{E}}^{(e)}\Big[\exp\Big\{-z(C\beta)^{-1/\beta} 	\texttt{I}_{\tau^{-}_{w}, \tau^{-}_{w}+ \epsilon}(-\beta \xi)^{-1/\beta}\Big\}; \ (t-t_0)/2 < \tau^{-}_{w} \leq t-\epsilon  \Big] \\ &\hspace{1cm} \leq  \widehat{\mathbb{E}}^{(e)} \Big[\exp\Big\{-ze^{-w}(C\beta)^{-1/\beta} 	\texttt{I}_{0,  \epsilon}(-\beta \xi)^{-1/\beta}\Big\}\Big]\widehat{\mathbb{P}}^{(e)}\Big(  \frac{t-t_0}{2}< \tau^{-}_{w} \leq t-\epsilon   \Big)\\
						& \hspace{1cm}\le 2C_{\beta}(ze^{-w}) \kappa\left(\frac{1}{\epsilon}, 0\right)C_3\left( C_2 2^{\eta +\rho}  \left(1+ \frac{t_0 - \epsilon}{2t_0}\right)^{\eta + \rho}-1\right)\kappa\left(\frac{1}{t-\epsilon}, 0 \right) \frac{U(-w)}{\sqrt{\pi}}.
 					\end{split}
 				\end{equation*}
Hence plugging this back into \eqref{eq_cotaI} (similarly as in the proof of  Lemma 4.4 in \cite{li2018asymptotic}), we get
 	\[
 	\begin{split}
 		\limsup_{t\to \infty}	\frac{1}{\kappa(1/t,0)} &\mathbb{P}_{(z,x)}\Big(Z_t<\infty, \ \overline{\xi}_{t-\epsilon} \geq y\Big) \\ & \leq 	\limsup_{t\to \infty}	\frac{1}{\kappa(1/t,0)}	\widehat{\mathbb{E}}^{(e)}\Big[\exp\Big\{-z (C\beta)^{-1/\beta} \texttt{I}_{0,t}(-\beta \xi)^{-1/\beta}\Big\}  ; \ \tau^{-}_{w}\leq t-\epsilon\Big]   \\& \le   C_5(ze^{-w}) \limsup_{t\to \infty} 	\frac{1}{\kappa(1/t,0)} \left(\kappa\left(\frac{2}{t+t_0}, 0\right)+\kappa\left(\frac{1}{t-\epsilon}, 0\right)\right),
 		\end{split}
 	\]
 	where 
 	\begin{eqnarray*}
 						C_5(ze^{-w}):= 2 C_{\beta}(ze^{-w})\left(1  \vee \kappa\left(\frac{1}{\epsilon}, 0\right)  C_3  \left( C_2 2^{\eta +\rho}  \left(1+ \frac{t_0 - \epsilon}{2t_0}\right)^{\eta + \rho}-1\right)\frac{U(-w)}{\sqrt{\pi}}\right).
 	\end{eqnarray*}
 Note that the limsup is finite since $\theta \mapsto\kappa( \theta , 0)$ is a regular varying function at zero.  
 Finally, taking into account that the renewal function $U$ grows at most linearly, i.e. $U(y-x) = \mathcal{O}(y-x)$, recalling the asymptotic behaviour of $C_{\beta}(ze^{-w}) $ in Lemma \ref{lem_critico_cota_I} and  letting $y\to \infty$ we obtain the desired result.
 			\end{proof}
 			
			For every $z >0$ and $x<0$,  denote by  $c(z,x)$ the constant defined in Proposition \ref{prop_cotanocero}. Our next result provides some useful  properties of the mapping $x\mapsto c(z,x)U(-x)$.
 			\begin{lemma}\label{lem:propcU}
 						For each $z > 0$ the map $x \mapsto  c(z,x)U(-x)$ on $(-\infty,0)$ is decreasing, strictly positive and bounded. In particular, for each $z>0$, it holds
 						\[\lim\limits_{x\to -\infty} c(z,x)U(-x)=: B(z)\in (0,\infty).\]
 			\end{lemma}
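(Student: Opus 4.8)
The plan is to read $c(z,x)U(-x)$ off Proposition \ref{prop_cotanocero}, i.e. as $\lim_{t\to\infty}\kappa(1/t,0)^{-1}\mathbb{P}_{(z,x)}(Z_t<\infty,\ \overline{\xi}_t<0)$, and to track the $x$-dependence of the pre-limit quantities. Recall, as already used in the proof of Theorem \ref{teo_explosubcritica}, that the law of $Z$ under $\mathbb{P}_{(z,x)}$ does not depend on $x$: writing $\xi=x+\eta$ with $\eta$ a L\'evy process issued from $0$ whose law is independent of $x$, the quenched identity \eqref{eq_exploproba} gives $\mathbb{P}_{(z,x)}(Z_t<\infty\mid\xi)=\exp\{-z\,v_t(0,0,\eta)\}=:g_t(\eta)$, a nonnegative functional of $\eta$ alone, while $\{\overline{\xi}_t<0\}=\{\overline{\eta}_t<-x\}$. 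Hence $\mathbb{P}_{(z,x)}(Z_t<\infty,\ \overline{\xi}_t<0)=\mathbb{E}\big[g_t(\eta)\mathbf{1}_{\{\overline{\eta}_t<-x\}}\big]$, which for each fixed $t$ is nonincreasing in $x$ on $(-\infty,0)$ since $g_t\ge 0$; letting $t\to\infty$ (the limit exists by Proposition \ref{prop_cotanocero}, and the factor $\kappa(1/t,0)$ does not depend on $x$) preserves this, so $x\mapsto c(z,x)U(-x)$ is decreasing. Strict positivity is immediate: $c(z,x)>0$ by Proposition \ref{prop_cotanocero} and $U(-x)>0$ for $x<0$.

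For a bound uniform in $x<0$ I would discard the event $\{\overline{\xi}_t<0\}$ and then invoke the quenched lower bound \eqref{eq_cota_v_I} (which follows from assumption \eqref{eq_Hyp}), evaluated with starting point $0$, namely $v_t(0,0,\xi)\ge (C\beta)^{-1/\beta}\texttt{I}_{0,t}(\beta\xi)^{-1/\beta}$, to obtain
\[
\mathbb{P}_{(z,x)}\big(Z_t<\infty,\ \overline{\xi}_t<0\big)\le\mathbb{P}_z(Z_t<\infty)\le\mathbb{E}^{(e)}\Big[\exp\big\{-z(C\beta)^{-1/\beta}\texttt{I}_{0,t}(\beta\xi)^{-1/\beta}\big\}\Big].
\]
By the duality $\widehat{\mathbb{E}}^{(e)}[F(\xi)]=\mathbb{E}^{(e)}[F(-\xi)]$, and since $\texttt{I}_{0,t}(-\beta\xi)=\int_0^t e^{\beta\xi_s}\mathrm{d}s$, the last expectation equals $\widehat{\mathbb{E}}^{(e)}\big[\exp\{-z(C\beta)^{-1/\beta}\texttt{I}_{0,t}(-\beta\xi)^{-1/\beta}\}\big]$, so Lemma \ref{lem_critico_cota_I} yields $\mathbb{P}_z(Z_t<\infty)\le 2C_\beta(z)\kappa(1/t,0)$ for all $t$ large enough. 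Dividing by $\kappa(1/t,0)$ and letting $t\to\infty$ gives $c(z,x)U(-x)\le 2C_\beta(z)<\infty$ for every $x<0$.

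Combining the three points, $x\mapsto c(z,x)U(-x)$ is a decreasing, strictly positive and bounded function on $(-\infty,0)$, hence it admits a limit $B(z):=\lim_{x\to-\infty}c(z,x)U(-x)=\sup_{x<0}c(z,x)U(-x)$; since $0<c(z,x_0)U(-x_0)\le B(z)\le 2C_\beta(z)<\infty$ for any fixed $x_0<0$, we conclude $B(z)\in(0,\infty)$. The step I expect to require genuine care is the uniform boundedness: one must check that assumption \eqref{eq_Hyp} is legitimately available here (it is, since this lemma feeds into the proof of Theorem \ref{teo_explocritica}) and that the time-reversal/duality manipulation correctly converts $\texttt{I}_{0,t}(\beta\xi)$ under $\mathbb{P}^{(e)}$ into the functional estimated in Lemma \ref{lem_critico_cota_I}; the monotonicity and positivity are essentially bookkeeping on top of Proposition \ref{prop_cotanocero}.
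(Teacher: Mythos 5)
Your proposal is correct and follows essentially the same route as the paper: monotonicity from the $x$-independence of the quenched non-explosion probability together with the shrinking of the event $\{\overline{\xi}_t<0\}$ as $x$ increases, and uniform boundedness by dropping the supremum constraint, invoking the stable lower bound \eqref{eq_cota_v_I} coming from \eqref{eq_Hyp}, and applying Lemma \ref{lem_critico_cota_I} after the same duality conversion. The only cosmetic difference is that the paper passes to the limit in $t$ to get the bound $C_\beta(z)$ rather than your $2C_\beta(z)$, which is immaterial.
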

 			
\begin{proof}
 			Note that the strictly positivity follows from the facts that the renewal function $U(-x)$ is a strictly positive function on $(-\infty,0)$ and by Lemma \ref{prop_cotanocero}. Since $\mathbb{P}_x(\overline{\xi}_t<0) \leq \mathbb{P}_y(\overline{\xi}<0)$ for $x\ge y$,  we  observe that the left-hand side on \eqref{eq_cota1} is decreasing in $x<0$,  so does  the map  $x \mapsto  c(z,x)U(-x)$.  Now, in order to see that the function is bounded from above, we first observe that similarly as in \eqref{eq_cotaI}, we have 
 				\begin{eqnarray*}
 				&&\mathbb{P}_{(z,x)}\Big(Z_t<\infty,\   \overline{\xi}_{t} < 0\Big)  = \mathbb{E}_{x}^{(e)}\Big[	\mathbb{P}_{(z,x)}\big(Z_t<\infty \ \big|\big.  \ \xi \big) \mathbf{1}_{\{\overline{\xi}_{t} < 0\}}\Big] \\ &  & \hspace{4.8cm}\leq  \widehat{\mathbb{E}}^{(e)}\Big[\exp\Big\{-z (C\beta)^{-1/\beta} \texttt{I}_{0,t}(-\beta \xi)^{-1/\beta}\Big\} \mathbf{1}_{\{\underline{\xi}_{t} \geq x\}}\Big] \\ &  & \hspace{4.8cm}\leq  \widehat{\mathbb{E}}^{(e)}\Big[\exp\Big\{-z (C\beta)^{-1/\beta} \texttt{I}_{0,t}(-\beta \xi)^{-1/\beta}\Big\}\Big]. 
 			\end{eqnarray*}
 		Hence, appealing to Lemma \ref{lem_critico_cota_I}, we obtain 
 		 	\begin{eqnarray*}
 			c(z,x)U(-x) &=& \lim\limits_{t \to \infty}\frac{1}{\kappa(1/t,0)}\mathbb{P}_{(z,x)}\Big(Z_t<\infty, \ \overline{\xi}_t <  0\Big)\\  & \leq & \lim\limits_{t \to \infty}\frac{1}{\kappa(1/t,0)}\widehat{\mathbb{E}}^{(e)}\Big[\exp\Big\{-z (C\beta)^{-1/\beta} \texttt{I}_{0,t}(-\beta \xi)^{-1/\beta}\Big\}\Big]\leq C_\beta(z),
 		\end{eqnarray*}
 where  $z\mapsto C_{\beta}(z)$ is the continuous function  defined in Lemma \ref{lem_critico_cota_I}. With this in hand, and considering that the mapping $x \mapsto  c(z,x)U(-x)$, on $(-\infty,0)$, is decreasing and strictly positive, we conclude that for every $z>0$ the limit $B(z)$ exists and it is finite and strictly positive.
 	\end{proof}
 
 			With Propositions \ref{prop_cotanocero} and \ref{prop_cota0} in hand, we may now proceed to the proof of Theorem \ref{teo_explocritica}.  The proof follows the same arguments as those used in Theorem 1.2 in \cite{bansaye2021extinction}, we provide its proof for the sake of completeness.

 			\begin{proof}[Proof of Theorem \ref{teo_explocritica}]
 				Fix $\varsigma, z >0$, $x<0$ and $\epsilon \in (0,1)$.  We begin by observing  from \eqref{CBILRE} that $\mathbb{P}_{(z,x)}(Z_t > 0)$ does not depend of the initial value $x$ of the L\'evy process $\xi$.  From Proposition \ref{prop_cota0},  we also observe that we may choose $y>0$ such that for $t$ large enough,
 				\begin{equation}
 					\mathbb{P}_{(z,x)}\Big(Z_t<\infty,\  \overline{\xi}_{t-\epsilon} \geq y\Big)   \leq \varsigma \mathbb{P}_{(z,x)}\Big(Z_t<\infty, \ \overline{\xi}_{t-\epsilon} < y\Big).
 				\end{equation} 
 				Now, note that for $t$ large enough, we get  $\{Z_{t} <\infty\}\subset \{Z_{t-\epsilon} <\infty\}$ and using the previous inequality, it follows, 
 				\begin{eqnarray*}
 					\mathbb{P}_{z}(Z_t<\infty)&=&  \mathbb{P}_{(z,x)}\Big(Z_t<\infty, \ \overline{\xi}_{t-\epsilon} \geq y\Big)  + \mathbb{P}_{(z,x)}\Big(Z_t<\infty,\  \overline{\xi}_{t-\epsilon} < y\Big) \\ &\leq & (1+\varsigma) \mathbb{P}_{(z,x-y)}\Big(Z_{t-\epsilon}<\infty, \ \overline{\xi}_{t-\epsilon} < 0\Big).
 				\end{eqnarray*}
 				In other words, for every $\varsigma>0$ there exists $y'<0$ such that  for $t$ large enough
 				\[\begin{split}
 					(1-\varsigma)\frac{\mathbb{P}_{(z,y')}\Big(Z_t<\infty, \ \overline{\xi}_{t} < 0\Big) }{\kappa(1/t,0)}&\leq \frac{	\mathbb{P}_{z}(Z_t<\infty) }{\kappa(1/t, 0)} \\ &\leq (1+\varsigma) \frac{\mathbb{P}_{(z,y')}\Big(Z_{t-\epsilon}<\infty,\  \overline{\xi}_{t-\epsilon} < 0\Big)}{\kappa(1/(t-\epsilon),0)} \frac{\kappa(1/(t-\epsilon),0)}{\kappa(1/t,0)}.
 				\end{split}\]
 				Next, using that the function $\kappa(\cdot, 0)$ is regularly varying at $0$, and then appealing to  Potter's Theorem in Bingham et al.  \cite{bingham1989regular}, we see that, for any $A>1$ and $\eta >0$,
 				\begin{eqnarray*}
 					\lim\limits_{t \to \infty} \frac{\kappa(1/(t-\epsilon),0)}{\kappa(1/t,0)}  = \lim\limits_{t \to \infty} \frac{\ell(t-\epsilon)}{\ell(t)}\left(\frac{t-\epsilon}{t}\right)^{-\rho} \leq 
 					\lim\limits_{t \to \infty}  A\left(\frac{t}{t-\epsilon}\right)^{\rho + \eta }  = A.
 				\end{eqnarray*}
 				On the other hand, according to Proposition \ref{prop_cotanocero}, there exists $0<c(z,y')<\infty$ such that 
 				\begin{equation*}
 					\lim\limits_{t \to \infty}\frac{1}{\kappa(1/t,0)}\mathbb{P}_{(z,y')}\Big(Z_t<\infty,\  \overline{\xi}_t <  0\Big) = c(z,y')U(-y').
 				\end{equation*}
 				Hence, as a consequence of the above facts, we get 
 				\begin{equation*}
 					(1-\varsigma) c(z,y')U(-y') \leq \liminf_{t \to \infty} \frac{\mathbb{P}_{z}(Z_t<\infty)}{\kappa(1/t, 0)} \leq (1+\varsigma) c(z,y')U(-y') A.
 				\end{equation*}
 				We observe that $y'$ is a sequence which may depend on $\varsigma$ and $z$. Further, this sequence $y'$ goes to $-\infty$ as $\varsigma$ goes to 0.  Thus, for any sequence $y_{\varsigma}(z)$, we have 
 				\[\begin{split}
 					0<	(1-\varsigma) c(z,y_{\varsigma}(z))U(-y_{\varsigma}(z)) &\leq \liminf_{t \to \infty} \frac{	\mathbb{P}_{z}(Z_t<\infty) }{\kappa(1/t, 0)} \\ &\leq (1+\varsigma) c(z, y_{\varsigma}(z))U(-y_{\varsigma}(z)) A < \infty,
 				\end{split}\]
 			where the strictly positivity and finiteness in the previous inequality follows from Lemma \ref{lem:propcU}. Therefore, using again  Lemma \ref{lem:propcU}, we get
 				\[\begin{split}
 					0<	\limsup_{\varsigma \to 0} (1-\varsigma) c(z,y_{\varsigma}(z))&U(-y_{\varsigma}(z)) \leq \liminf_{t \to \infty} \frac{\mathbb{P}_{z}(Z_t<\infty)}{\kappa(1/t, 0)}\\  &\leq  \liminf_{\varsigma \to 0}(1+\varsigma)  c(z,y_{\varsigma}(z))U(-y_{\varsigma}(z)) A < \infty.
 				\end{split}\] 
 				Since $A$ can be taken arbitrary close to 1,  we deduce that the inferior and superior limits are equal, finite and positive. That is to say,
 				\begin{equation*}
 					0<	 \lim\limits_{t \to \infty} \frac{\mathbb{P}_{z}(Z_t<\infty)}{\kappa(1/t, 0)}=\mathfrak{C}(z):=\lim_{\varsigma \to 0}  c(z,y_{\varsigma}(z))U(-y_{\varsigma}(z)) < \infty,
 				\end{equation*}
 				which completes the proof.
 			\end{proof}
 		\vspace{1cm}
 			
 \textbf{Acknowledgements:} Both authors would like to thank an anonymous referee and the Associated Editor for their remarks which led to a significant improvement  of this paper. 
 			
 			 			 		\vspace{0.5cm}
 			 			 		
 			\textbf{Conflict of interest:} The authors did not receive support from any organization for the submitted work.
 			 		\vspace{0.5cm}
 			
 			\textbf{Data availability}: No datasets were generated or analysed for this manuscript.

 	\bibliographystyle{abbrv}
 	\bibliography{references}

\begin{thebibliography}{10}

\bibitem{afanasyev2005criticality}
V.~I. Afanasyev, J.~Geiger, G.~Kersting, and V.~A. Vatutin.
\newblock Criticality for branching processes in random environment.
\newblock {\em Ann. Probab.}, 33(2):645--673, 2005.

\bibitem{BaDoKy}
S.~Baguley, L.~{D}\"{o}ring, and A.~E. Kyprianou.
\newblock General path integrals and stable {SDE}s.
\newblock {\em J. Eur. Math. Soc. (In press)}, 2023.

\bibitem{bansaye2021extinction}
V.~Bansaye, J.~C. Pardo, and C.~Smadi.
\newblock Extinction rate of continuous state branching processes in critical
  {L}\'{e}vy environments.
\newblock {\em ESAIM Probab. Stat.}, 25:346--375, 2021.

\bibitem{bansaye2013extinction}
V.~Bansaye, J.~C. Pardo~Millan, and C.~Smadi.
\newblock On the extinction of continuous state branching processes with
  catastrophes.
\newblock {\em Electron. J. Probab.}, 18:no. 106, 31, 2013.

\bibitem{bertoin1996levy}
J.~Bertoin.
\newblock {\em L\'{e}vy processes}, volume 121 of {\em Cambridge Tracts in
  Mathematics}.
\newblock Cambridge University Press, Cambridge, 1996.

\bibitem{bertoin1997spitzer}
J.~Bertoin and R.~Doney.
\newblock Spitzer's condition for random walks and {L}\'evy processes.
\newblock {\em Annales de l'Institut Henri Poincar\'e (B) Probability and
  Statistics}, 33(2):167--178, 1997.

\bibitem{bingham1989regular}
N.~H. Bingham, C.~M. Goldie, and J.~L. Teugels.
\newblock {\em Regular variation}, volume~27 of {\em Encyclopedia of
  Mathematics and its Applications}.
\newblock Cambridge University Press, Cambridge, 1989.

\bibitem{cardona2023speed}
N.~Cardona-Tob{\'o}n and J.~C. Pardo.
\newblock Speed of extinction for continuous state branching processes in a
  weakly subcritical {L}\'evy environment.
\newblock {\em To appear in Journal of Applied Probability}, 2024.

\bibitem{cardona2021speed}
N.~Cardona-Tob{\'o}n and J.~C. Pardo.
\newblock Speed of extinction for continuous state branching processes in
  subcritical {L}\'evy environments: the strongly and intermediate regimes.
\newblock {\em To appear in ALEA Latin American Journal of Probability and
  Mathematical Statistics}, 2024.

\bibitem{doney2007fluctuation}
R.~A. Doney.
\newblock {\em Fluctuation theory for {L}\'{e}vy processes}, volume 1897 of
  {\em Lecture Notes in Mathematics}.
\newblock Springer, Berlin, 2007.

\bibitem{maller}
K.~B. Erickson and R.~A. Maller.
\newblock Generalised {O}rnstein-{U}hlenbeck processes and the convergence of
  {L}\'{e}vy integrals.
\newblock In {\em S\'{e}minaire de {P}robabilit\'{e}s {XXXVIII}}, volume 1857
  of {\em Lecture Notes in Math.}, pages 70--94. Springer, Berlin, 2005.

\bibitem{grey1974asymptotic}
D.~R. Grey.
\newblock Asymptotic behaviour of continuous time, continuous state-space
  branching processes.
\newblock {\em J. Appl. Probab.}, 11:669--677, 1974.

\bibitem{he2018continuous}
H.~He, Z.~Li, and W.~Xu.
\newblock Continuous-state branching processes in {L}\'{e}vy random
  environments.
\newblock {\em J. Theoret. Probab.}, 31(4):1952--1974, 2018.

\bibitem{ikeda2014stochastic}
N.~Ikeda and S.~Watanabe.
\newblock {\em Stochastic differential equations and diffusion processes}.
\newblock Elsevier, 2014.

\bibitem{KolSav}
M.~Kolb and M.~Savov.
\newblock A characterization of the finiteness of perpetual integrals of
  {L}\'{e}vy processes.
\newblock {\em Bernoulli}, 26(2):1453--1472, 2020.

\bibitem{kwasnicki2013suprema}
M.~Kwa\'{s}nicki, J.~Ma{\l}ecki, and M.~Ryznar.
\newblock Suprema of {L}\'{e}vy processes.
\newblock {\em Ann. Probab.}, 41(3B):2047--2065, 2013.

\bibitem{kyprianou2014fluctuations}
A.~E. Kyprianou.
\newblock {\em Fluctuations of {L}\'{e}vy processes with applications}.
\newblock Universitext. Springer, Heidelberg, second edition, 2014.

\bibitem{li2018asymptotic}
Z.~Li and W.~Xu.
\newblock Asymptotic results for exponential functionals of {L}\'{e}vy
  processes.
\newblock {\em Stochastic Process. Appl.}, 128(1):108--131, 2018.

\bibitem{palau2018branching}
S.~Palau and J.~C. Pardo.
\newblock Branching processes in a {L}\'{e}vy random environment.
\newblock {\em Acta Appl. Math.}, 153:55--79, 2018.

\bibitem{palau2016asymptotic}
S.~Palau, J.~C. Pardo, and C.~Smadi.
\newblock Asymptotic behaviour of exponential functionals of {L}\'{e}vy
  processes with applications to random processes in random environment.
\newblock {\em ALEA Lat. Am. J. Probab. Math. Stat.}, 13(2):1235--1258, 2016.

\bibitem{patie2018bernstein}
P.~Patie and M.~Savov.
\newblock Bernstein-gamma functions and exponential functionals of {L}\'{e}vy
  processes.
\newblock {\em Electron. J. Probab.}, 23:Paper No. 75, 101, 2018.

\bibitem{persson1975generalization}
J.~Persson.
\newblock A generalization of {C}arath{\'e}odory's existence theorem for
  ordinary differential equations.
\newblock {\em Journal of Mathematical Analysis and Applications},
  49(2):496--503, 1975.

\bibitem{ScSVon}
R.~L. Schilling, R.~Song, and Z.~Vondra\v{c}ek.
\newblock {\em Bernstein functions}, volume~37 of {\em De Gruyter Studies in
  Mathematics}.
\newblock Walter de Gruyter \& Co., Berlin, second edition, 2012.
\newblock Theory and applications.

\end{thebibliography}

 \end{document}